\newtheorem{thm}{Theorem}[section]
\newtheorem{lem}[thm]{Lemma}
\newtheorem{defi}[thm]{Definition}
\newtheorem{cor}[thm]{Corollary}
\newtheorem{prop}[thm]{Proposition}
\newtheorem{ex}[thm]{Example}
\newtheorem{rmk}[thm]{Remark}
\newtheorem{conj}[thm]{Conjecture}
\newtheorem{thma}{Theorem}
\newtheorem{propa}[thma]{Proposition}
\title[ On Galois theory of cluster algebras]
{ On Galois theory of cluster algebras: general and that from Riemann surfaces}
\author{Jinlei Dong \;\;\;and \;\;\; Fang Li  }
\address{Jinlei Dong
\newline
School of Mathematical Sciences,
Zhejiang University,
Yuhangtang Road 866,
Hangzhou, Zhejiang 310058,
China P.R. }
\email{jinleidong@zju.edu.cn}
\address{Fang Li
\newline School of Mathematical Sciences,
Zhejiang University,
Yuhangtang Road 866,
Hangzhou, Zhejiang 310058,
China P.R.}
\email{fangli@zju.edu.cn}
\date{\today}
\newcommand{\lra}{\longrightarrow}
\newcommand{\ra}{\rightarrow}
\newcommand{\sdp}{\times\kern-.2em\vrule height1.1ex depth-.05ex}
\newcommand{\epi}{\lra \kern-.8em\ra}
\begin{document}
\renewcommand{\thefootnote}{\alph{footnote}}
\setcounter{footnote}{-1} \footnote{Project supported by the National Natural Science Foundation of China (No.12071422 and No.12131015). }

\renewcommand{\thefootnote}{\alph{footnote}}
\maketitle
\bigskip
	
\begin{abstract}
One of the key points in Galois theory via field extensions is to build up a correspondence between subfields of a field and subgroups of its automorphism group, so as to study fields via methods of groups. As an analogue of the Galois theory, we want to discuss the relations between cluster subalgebras of a cluster algebra and subgroups of its automorphism group and then set up the Galois-like method.

In the first part, we build up a Galois map from a skew-symmetrizable cluster algebra $\mathcal A$ to its cluster automorphism group, and introduce notions of Galois-like extensions and Galois extensions.  A necessary condition for Galois extensions of a cluster algebra $\mathcal A$ is given, which  is also a sufficient condition if $\mathcal A$ has a $\mathcal{D}$-stable basis or stable monomial basis with unique expression.
Some properties for Galois-like extensions are discussed. It is shown that two subgroups $H_1$ and $H_2$ of the automorphism group $\text{Aut}\mathcal A$ are conjugate to each other if and only if there exists $ f \in \text{Aut}\mathcal{A} $ and two Galois-like extension subalgebras $\mathcal A(\Sigma_1)$, $\mathcal A(\Sigma_2)$ corresponding to $H_1$ and $H_2$ such that $f$ is an isomorphism between $\mathcal A(\Sigma_1)$ and $\mathcal A(\Sigma_2)$.

In the second part, as the answers of two conjectures proposed in the first part, for a cluster algebra from a feasible surface, we prove that  Galois-like extension subalgebras corresponding to a subgroup of a cluster automorphism group have the same rank.
Moreover, it is shown that there are order-preserving reverse Galois maps for these cluster algebras. We also give examples of $\mathcal{D}$-stable bases and some discussions on the Galois inverse problem in this part.

\end{abstract}

\tableofcontents

\section{Introduction}

From Lagrange and Gauss to Abel and Galois, people have gradually come to understand symmetry as the code used by the creator to design the laws of all things.

The framework of the Galois theory was established by Galois via solving the problem of root solutions for higher order equations.

The usefulness of Galois groups provides the invariant under the meaning of symmetry in the structure of fields, which reflects the understanding of the ``laws" and is also the key to solving algebraic equations of higher order through the distribution law of roots.

The meaning of the method of Galois lies in its ability to extend to almost all mathematical objects. This is the significance of the automorphism group of any mathematical object.

The specific operation method is achieved through the correspondence between the sub-objects chains of mathematical objects and the subgroup chains of automorphism groups.

This paper aims to apply the method to cluster algebras so as to consider the symmetric invariant in the structure of cluster algebras.\vspace{2mm}

Cluster algebras come from many fields of mathematics, such as canonical bases and total positivity. It is a large class algebras that was axiomatically defined and introduced by Fomin and Zelevinsky in \cite{FZ02}.

The cluster algebras in \cite{FZ02, HuangLi23} were defined recursively: Firstly, give a seed consisting of exchange matrix and initial cluster variables, then introduce a formula called mutation to generate new cluster variables, and all cluster variables obtained by finite steps mutations generate an algebra called cluster algebras.

In \cite{FZ03} Fomin and Zelevinsky defined a strong cluster isomorphism between the cluster
algebras $ \mathcal{A}_{1} $ and $ \mathcal{A}_{2} $ which requires any seed in $ \mathcal{A}_{1} $ under the isomorphism to be a seed in $ \mathcal{A}_{2} $. And it can be easily proved that it is equivalent to the condition that the isomorphism can commute with mutations in a sense in \cite{ASS14}, which was called cluster isomorphism in \cite{ASS14}.

On the other hand, the author in \cite{HuangLiYang18} gave a definition of the morphism between cluster algebras $ \mathcal{A}_{1} $ and $ \mathcal{A}_{2} $ requiring it commutes with mutations in a sense. When the morphism is isomorphic, the definition is also cluster isomorphism mentioned above. Besides, if the morphism is monomorphic, $ \mathcal{A}_{1} $ is called a cluster subalgebra of the cluster algebra $ \mathcal{A}_{2} $.

	The author in \cite{HuangLiYang18} also gave a structure of all cluster subalgebras by mixing-type sub-seed. As for the cluster automorphisms of cluster algebras, the author in \cite{ASS14} obtained a family of cluster automorphism groups using the method of cluster categories and put forward a conjecture that these groups of cluster algebras from surfaces are isomorphic to mapping class groups, which has been proved in \cite{Gu11, BS15, BQiu15, BY18}.

One of the key points in Galois theory via field extensions is to build up a correspondence between subfields of a field and subgroups of its automorphism group, so as to study fields via methods of groups. As an analogue of the Galois theory, we want to discuss the relations between cluster subalgebras of a cluster algebra and subgroups of its automorphism group and then set up the Galois-like method.

In this paper, we try to build up the Galois theory of cluster algebras. We construct a Galois map from cluster subalgebras of a cluster algebra to subgroups of its Galois group and introduce the notions of Galois-like extensions and Galois extensions. In addition, we give a sufficient and necessary condition for Galois extensions. Finally, we discuss some properties of Galois-like extensions and confirm some conjectures proposed in Section 3 for the cluster algebras coming from surfaces with certain conditions. Moreover, we also discuss the Galois inverse problem for these cluster algebras.

As a supplement, we also introduce the notion of complement sub-seed which gives a way to embed the Galois group into a cluster automorphism group of a cluster subalgebra, and offer examples for Galois maps.

The article is organized as follows:

In Section 2, we introduce definitions and some preparation knowledge of cluster algebras, cluster morphisms, cluster subalgebras, and mapping class groups of marked surfaces.

In Section 3, we firstly introduce the Galois map from cluster subalgebras of a cluster algebra to their fixed subgroups which are called Galois group, and introduce the notion of complement sub-seed which gives a way to embed the Galois group into a cluster automorphism group of a cluster subalgebra.

Suppose $ H_{1}, H_{2} \in \mathit{2}^{\text{\em Aut}\mathcal{A}} \backslash \text{ker}\phi $ for a cluster algebra $ \mathcal{A} $ and $ \mathcal{M}_{sub}^{H_{i}},i=1,2 $ are their corresponding sets of Galois-like extension subalgebras, we have the following theorem:

\begin{thma}[Theorem \ref{galad}]
	$ H_{1} $ is conjugate to $ H_{2} $ if and only if there exist $ f \in \text{\em Aut}\mathcal{A} $, $ \mathcal{A}(\Sigma_{1}) \in \mathcal{M}_{sub}^{H_{1}}$, $ \mathcal{A}(\Sigma_{2}) \in \mathcal{M}_{sub}^{H_{2}} $ such that the restriction of $ f $ is an isomorphism between $ \mathcal{A}(\Sigma_{1}) $ and $ \mathcal{A}(\Sigma_{2}) $.
\end{thma}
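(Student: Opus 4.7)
The plan is to imitate the classical Galois-theoretic statement that two subgroups of a Galois group are conjugate if and only if their fixed intermediate subfields are related by an automorphism of the ambient field. The algebraic engine in both directions is the \emph{conjugation equivariance} of the Galois map: for any $f\in\text{Aut}\mathcal{A}$ and any cluster subalgebra $B\subseteq\mathcal{A}$ on which $\phi$ is defined, one has $\phi(f(B))=f\,\phi(B)\,f^{-1}$, simply because $g$ fixes $f(x)$ if and only if $f^{-1}gf$ fixes $x$.

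For the forward direction, assume $H_{2}=fH_{1}f^{-1}$ for some $f\in\text{Aut}\mathcal{A}$, and pick any $\mathcal{A}(\Sigma_{1})\in\mathcal{M}_{sub}^{H_{1}}$. Since $f$ is a cluster automorphism, it sends the (mixing-type) sub-seed $\Sigma_{1}$ to another sub-seed $\Sigma_{2}:=f(\Sigma_{1})$, and the image is again a cluster subalgebra, namely $\mathcal{A}(\Sigma_{2})=f(\mathcal{A}(\Sigma_{1}))$. The restriction of $f$ is then automatically an isomorphism $\mathcal{A}(\Sigma_{1})\to\mathcal{A}(\Sigma_{2})$; what remains is to check $\mathcal{A}(\Sigma_{2})\in\mathcal{M}_{sub}^{H_{2}}$. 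This breaks into two parts: by conjugation equivariance, $\phi(\mathcal{A}(\Sigma_{2}))=f\phi(\mathcal{A}(\Sigma_{1}))f^{-1}=fH_{1}f^{-1}=H_{2}$; and the defining properties of a Galois-like extension (being stated purely in terms of the Galois map and of cluster structure) are preserved by the cluster automorphism $f$, so $\mathcal{A}(\Sigma_{2})$ inherits them from $\mathcal{A}(\Sigma_{1})$.

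For the backward direction, assume there exist $\mathcal{A}(\Sigma_{i})\in\mathcal{M}_{sub}^{H_{i}}$ and $f\in\text{Aut}\mathcal{A}$ whose restriction is an isomorphism $\mathcal{A}(\Sigma_{1})\to\mathcal{A}(\Sigma_{2})$. Then as subalgebras of $\mathcal{A}$ we have $f(\mathcal{A}(\Sigma_{1}))=\mathcal{A}(\Sigma_{2})$. Applying conjugation equivariance of $\phi$ once more gives $H_{2}=\phi(\mathcal{A}(\Sigma_{2}))=\phi(f(\mathcal{A}(\Sigma_{1})))=f\phi(\mathcal{A}(\Sigma_{1}))f^{-1}=fH_{1}f^{-1}$, so $H_{1}$ and $H_{2}$ are conjugate via $f$. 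The hypothesis $H_{i}\notin\ker\phi$ is used precisely here to guarantee $\phi(\mathcal{A}(\Sigma_{i}))=H_{i}$ exactly, rather than some possibly larger fixing subgroup.

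The main obstacle is the identity $\phi(f(B))=f\phi(B)f^{-1}$ at the level of Galois-like extension subalgebras: one has to verify that the push-forward of a Galois-like extension by a cluster automorphism is again Galois-like, which should be a formal consequence of the definitions once one checks that cluster automorphisms permute the sub-seed structures and commute (in the appropriate sense) with mutations. Everything else is bookkeeping: once equivariance of $\phi$ and stability of the Galois-like property under $\text{Aut}\mathcal{A}$ are in hand, both implications are immediate from the two displayed equalities above.
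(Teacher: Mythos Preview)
Your proposal is correct and follows essentially the same route as the paper: in both directions the work reduces to the conjugation equivariance $\text{Gal}_{f(\Sigma')}\mathcal{A}=f\,\text{Gal}_{\Sigma'}\mathcal{A}\,f^{-1}$, and the forward direction additionally needs that the Galois-like property (maximality in $\mathcal{A}^{H}$ plus $\text{Gal}=H$) transports along $f$, which the paper verifies explicitly rather than invoking as a general principle. One small cosmetic point: in the paper $\phi$ denotes the map $H\mapsto\mathcal{M}_{sub}^{H}$, not the Galois map $\xi:\mathcal{A}(\Sigma')\mapsto\text{Gal}_{\Sigma'}\mathcal{A}$; your displayed identities should use $\xi$ (or just write $\text{Gal}$) to match the paper's conventions.
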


Moreover, we introduce the notions of Galois-like extensions and Galois extensions and give a necessary condition for Galois extensions of cluster algebras, which is also a sufficient condition if cluster algebras have $\mathcal{D}$-stable bases or stable monomial bases with unique expression.

\begin{thma}[Theorem \ref{crit}]
		Let $\mathcal{A}=\mathcal{A}(\Sigma) $ be a cluster algebra and $ H $ be a subgroup of the cluster automorphism group of $\mathcal{A}(\Sigma)$. Suppose $ \mathcal{A}(\Sigma_{1}) $ is maximal in $ \mathcal{A}^{H} $ as a cluster subalgebra of $ \mathcal{A}$.

		(i)\; If $ \mathcal{A}^{H} = \mathcal{A}(\Sigma_{1}) $, then $ |Hz| = + \infty $ for any $ z \in \mathcal{A} \backslash \mathcal{A}(\Sigma_{1}) $ which can be represented as Laurent polynomials of some cluster with positive coefficients.
	
		(ii)\; Conversely, $ \mathcal{A}^{H} = \mathcal{A}(\Sigma_{1}) $ holds if either (a)   $\mathcal A$ has a stable monomial basis with the unique expression  and $ |Hz| = + \infty $ for any cluster variable $ z \in \mathcal{A} \backslash \mathcal{A}(\Sigma_{1}) $ or
		(b)   $\mathcal A$ has a $\mathcal{D}$-stable basis  $ |Hz| = + \infty $ for any cluster variable or quasi cluster variable $ z \in \mathcal{A} \backslash \mathcal{A}(\Sigma_{1}) $.
\end{thma}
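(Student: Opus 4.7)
The plan is to prove the two implications separately. For part (i) I proceed by contrapositive: given a finite orbit for some positive element, construct an element of $\mathcal{A}^H$ lying outside $\mathcal{A}(\Sigma_1)$, which contradicts $\mathcal{A}^H=\mathcal{A}(\Sigma_1)$. For part (ii) I expand a hypothetical element of $\mathcal{A}^H\setminus\mathcal{A}(\Sigma_1)$ in the postulated basis and exploit finiteness of the expansion together with stability of the basis under cluster automorphisms.

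For (i), suppose towards a contradiction that there is $z\in\mathcal{A}\setminus\mathcal{A}(\Sigma_1)$ with a positive Laurent expansion in some cluster $x$ of $\mathcal{A}$ and with $Hz=\{z_1=z,z_2,\dots,z_n\}$ finite. The elementary symmetric functions $e_k(z_1,\dots,z_n)$ are all $H$-invariant, hence lie in $\mathcal{A}^H=\mathcal{A}(\Sigma_1)$; in particular $z$ satisfies the monic relation $\prod_i(t-z_i)\in\mathcal{A}(\Sigma_1)[t]$, so $z$ is integral over $\mathcal{A}(\Sigma_1)$ inside $\mathcal{A}$. The most efficient witness to a contradiction is the orbit sum $s=\sum_{i=1}^n z_i\in\mathcal{A}(\Sigma_1)$: since each $z_i=h_i(z)$ inherits a positive Laurent expansion in the cluster $h_i(x)$, the sum $s$ is a superposition of positive Laurent expansions. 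Positivity prevents cancellation among the monomials appearing in $z$ itself, so its Laurent support persists inside the support of $s$. Because $z\notin\mathcal{A}(\Sigma_1)$, at least one of these surviving monomials must be of a type that no element of $\mathcal{A}(\Sigma_1)$ can carry, contradicting $s\in\mathcal{A}(\Sigma_1)$. This forces $|Hz|=+\infty$.

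For (ii), suppose for contradiction that $\mathcal{A}^H\supsetneq\mathcal{A}(\Sigma_1)$ and pick $y\in\mathcal{A}^H\setminus\mathcal{A}(\Sigma_1)$. Using the hypothesis, write $y=\sum_B c_B\,B$ as a finite expansion, either in the $\mathcal{D}$-stable basis of case (b), or in the stable monomial basis with unique expression of case (a). Stability of the basis means that every $h\in H$ permutes the basis elements (case (b)), respectively the cluster and quasi cluster variables entering each monomial (case (a)). Combined with the $H$-invariance of $y$ and the uniqueness of the expression, this forces the finite set of basis elements (or monomial factors) appearing in the expansion of $y$ to be $H$-stable; in particular, each such element $B$ has $|HB|<+\infty$. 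By the hypothesis, every cluster variable or quasi cluster variable outside $\mathcal{A}(\Sigma_1)$ has infinite $H$-orbit, so every $B$ appearing must already lie in $\mathcal{A}(\Sigma_1)$. Therefore $y\in\mathcal{A}(\Sigma_1)$, contradicting the choice of $y$.

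The main obstacle lies in the positivity step of part (i). One must rule out the possibility that some subtle algebraic cancellation allows the orbit sum (or another symmetric combination) to fall inside $\mathcal{A}(\Sigma_1)$ even though $z$ itself does not. The plan is to leverage positivity of the Laurent expansions to compare the Laurent supports of $z$ and of the elements of $\mathcal{A}(\Sigma_1)$ in the cluster $x$, using that no monomial appearing in $z$ can be cancelled when the orbit is summed; if this fails, one can fall back on the integrality relation together with an integral-closure property of $\mathcal{A}(\Sigma_1)$ inside $\mathcal{A}$. For part (ii), the key technical input is that $\mathcal{D}$-stable and stable monomial bases are genuinely permuted by cluster automorphisms, which should be immediate from their definitions in the preceding sections and makes the orbit argument essentially automatic.
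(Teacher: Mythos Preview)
Your overall strategy matches the paper's: contrapositive plus orbit sum for (i), basis expansion for (ii). However, part (i) has a genuine gap.

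You expand each orbit element $z_i = h_i(z)$ in its own cluster $h_i(x)$. But to argue that a monomial of $z$ ``survives'' in the sum $s$, all $z_i$ must be expanded in the \emph{same} cluster; otherwise the notion of cancellation makes no sense, and the phrase ``its Laurent support persists inside the support of $s$'' has no content. The paper instead fixes a single seed $\Sigma$ of which $\Sigma_1$ is a mixing-type sub-seed (so $\mathbf{x}' \subseteq \mathbf{x}$), expands every $z_i$ in that one cluster $\mathbf{x}$, and then observes that since $z_1 \notin \mathcal{A}(\Sigma_1)$ its expansion must involve some variable $x \in \mathbf{x} \setminus \mathbf{x}'$. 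Positivity of all the $P_i$ in this common cluster prevents that variable from being cancelled in $\sum z_i$, whereas any element of $\mathcal{A}(\Sigma_1)$ is a Laurent polynomial in $\mathbf{x}'$ alone. This is exactly what your vague phrase ``of a type that no element of $\mathcal{A}(\Sigma_1)$ can carry'' should mean, but your setup in disparate clusters cannot deliver it. The paper also inserts a preliminary reduction: if $z_1 M \in \mathcal{A}(\Sigma_1)$ for some monomial $M$ in $\mathbf{x}'$, then $z_1$ is already $H$-fixed and we are done; otherwise one multiplies by such an $M$ to clear $\mathbf{x}'$-denominators before summing.

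Your proposed fallback via integrality does not work: there is no reason for $\mathcal{A}(\Sigma_1)$ to be integrally closed in $\mathcal{A}$, and no such hypothesis is available.

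For part (ii) your argument is essentially the paper's, but one step is elided. The hypothesis bounds the $H$-orbits of \emph{cluster variables} (case (a)) or \emph{cluster and quasi cluster variables} (case (b)), not of basis elements $B$. You must pass from ``$H$ permutes the finitely many basis elements $v_1,\dots,v_s$ occurring in $y$'' to ``$H$ permutes the finite set of (quasi-)cluster variable factors appearing in the $v_i$''; this is precisely where the unique-expression hypothesis in (a), respectively condition (iii) in the definition of a $\mathcal{D}$-stable basis in (b), is used. Once that is said, each factor has finite $H$-orbit and hence lies in $\mathcal{A}(\Sigma_1)$, so every $v_i$ and therefore $y$ does too.
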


Unfortunately, $ \mathcal{A}(\Sigma)^{H} $ is not a cluster subalgebra in many cases.
Thus we discuss Galois-like extensions. That is, $ \mathcal{A}(\Sigma_{1})\subset \mathcal{A}(\Sigma) $ satisfies that $ \mathcal{A}(\Sigma_{1}) $ is maximal in $ \mathcal{A}(\Sigma)^{H} $ as a cluster subalgebra of $ \mathcal{A}(\Sigma) $ and $ \text{Gal}_{\Sigma_{1}}\mathcal{A}(\Sigma) =H$.

In Section 4, we introduce the relation between mapping class groups and the corresponding cluster automorphism groups and give some examples of $\mathcal{D}$-stable bases for cluster algebras from the so-called feasible surfaces, which is given in Definition \ref{def-feasiblesurface}.

Furthermore, we give the Galois map and reverse Galois for cluster algebras from feasible surfaces as a partial confirmation of Conjecture \ref{ssq}.

Suppose $ (S,M) $ be a feasible surface and $ \mathcal{A}(S,M) $ is the cluster algebra without coefficients corresponding to $ (S,M) $, we have:

\begin{thma}[Theorem \ref{c1}]
	Let $ (S, M) $ be a feasible surface, $ \mathcal{A}(S, M) $ its cluster algebra without coefficient, and a subgroup $ H \le \text{\em Aut}\mathcal{A}(S, M) $. If $ \mathcal{A}(\Sigma_{i}), i=1, 2 $ are maximal in $ \mathcal{A}(S,M)^{H} $ as cluster subalgebras of $ \mathcal{A}(S,M) $, then the ranks of $ \mathcal{A}(\Sigma_{i}), i=1, 2 $ are the same.
\end{thma}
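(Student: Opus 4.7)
The plan is to translate the statement into a topological claim about collections of arcs on the surface $(S,M)$. For a feasible surface, the cluster automorphism group is identified with (a subgroup of) the tagged mapping class group, so $H$ acts on $(S,M)$ and on the set of tagged arcs. Under the Fomin-Shapiro-Thurston dictionary, cluster variables of $\mathcal{A}(S,M)$ correspond to tagged arcs, and seeds to tagged triangulations. By the mixing-type sub-seed description of cluster subalgebras recalled from Huang-Li-Yang, each $\mathcal{A}(\Sigma_i)$ should be determined by a partial tagged triangulation $T_i$, with $\operatorname{rank}\mathcal{A}(\Sigma_i) = |T_i|$.

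Next I would use Theorem \ref{crit} together with the existence of a $\mathcal{D}$-stable basis (one of the advertised features of cluster algebras from feasible surfaces) to identify the condition $\mathcal{A}(\Sigma_i) \subseteq \mathcal{A}(S,M)^H$ with pointwise $H$-fixedness of every arc in $T_i$. The key point is that, for a $\mathcal{D}$-stable basis, a cluster variable $x_\gamma$ is $H$-fixed if and only if $H\cdot\gamma = \gamma$ as a tagged arc, since fixedness can be read off from the Laurent expansion with respect to the basis. Maximality of $\mathcal{A}(\Sigma_i)$ as a cluster subalgebra inside $\mathcal{A}(S,M)^H$ then translates to $T_i$ being a maximal collection of pairwise-compatible $H$-fixed tagged arcs on $(S,M)$.

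With this translation, the theorem reduces to the topological claim that any two maximal pairwise-compatible families of $H$-fixed tagged arcs on $(S,M)$ have the same cardinality. The natural strategy is to identify the $H$-fixed tagged arcs with (tagged) arcs on an appropriate quotient object $(S,M)/H$ or on the fixed-point subsurface of $H$, so that such a maximal family corresponds to a tagged triangulation of this quotient, and then invoke the fact that every tagged triangulation of a fixed marked surface (or orbifold) has the same number of arcs.

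The main obstacle will be making the final topological step rigorous: one must set up the quotient or fixed-point object carefully, handling fixed marked points, possible tag-flipping under $H$, punctures, and potential orbifold singularities, and then show that a maximal compatible family of $H$-fixed arcs actually saturates to a full tagged triangulation of this quotient rather than a strict subfamily. The feasibility hypothesis on $(S,M)$ should be precisely what is needed to exclude such pathologies, and together with Theorem \ref{crit} and the $\mathcal{D}$-stable basis, it should deliver both the algebra-to-geometry bridge and the final invariance of the arc count.
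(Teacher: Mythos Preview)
Your high-level translation is right and matches the paper: both $\Sigma_i$ correspond to maximal compatible families of $H$-fixed tagged arcs, and the theorem reduces to showing any two such maximal families have the same size. But the specific machinery you propose does not work, and the paper's mechanism is genuinely different from yours.

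First, invoking Theorem~\ref{crit} and a $\mathcal{D}$-stable basis is a detour that does not pay off here: the bangle/bracelet/band bases in the paper are only established for unpunctured surfaces, so they are not available for a general feasible $(S,M)$; and Theorem~\ref{crit} concerns the stronger Galois-extension condition $\mathcal{A}^H=\mathcal{A}(\Sigma_1)$, not the rank comparison. The paper does not use Theorem~\ref{crit} at all in this proof. The identification ``$x_\gamma$ is $H$-fixed iff $H\cdot\gamma=\gamma$'' follows directly from Theorem~\ref{mcg} (the isomorphism with the tagged mapping class group), without any basis input.

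Second, and more seriously, your proposed endgame via a quotient $(S,M)/H$ fails: $H$ is typically infinite (it can contain Dehn twists), so there is no sensible quotient marked surface. Your alternative ``fixed-point subsurface'' is closer, but the literal fixed locus of a mapping class is not a subsurface. What the paper actually does is build, from the arcs of one chosen maximal $\Sigma_1$, a concrete subsurface $U\subset S$ (a union of digons, triangles and self-folded pieces glued from the arcs of $\Sigma_1$, then padded with annuli) and proves two things: $U$ is invariant under $\rho(H)$ up to isotopy, and a tagged arc $\gamma$ satisfies $x_\gamma\in\mathcal{A}(S,M)^H$ \emph{iff} $\gamma$ lies in $U$. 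The equality of ranks then comes from a direct counting lemma (Lemma~\ref{subnarclem}): for a fixed subsurface $(U,U\cap M)$, all maximal compatible tagged-arc families have the same cardinality. There is no quotient and no orbifold.

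Finally, you are missing a subtlety that the paper handles with Lemmas~\ref{asymnosymlem}--\ref{symnoasymlem} and Remark~\ref{extodirect}: an $H$-fixed cluster variable $x_\gamma$ corresponds to an unoriented arc fixed by $\rho(H)$, but $\rho(f)$ may still reverse the orientation of $\gamma$. Whether $x_\gamma$ can sit as an \emph{exchange} variable in a fixed sub-seed (and hence contribute to the rank) depends on this orientation behaviour, and forces $H\le\text{Aut}^+\mathcal{A}(S,M)$ in the nontrivial case. Without separating these cases, the ``rank $=|T_i|$'' identification you assert is not justified.
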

\begin{thma}[Theorem \ref{c2}]
	Let $ H \le \text{\em Aut}\mathcal{A}(S,M) $, $ \mathcal{A}(\Sigma_{1}) $, $ \mathcal{A}(\Sigma_{2}) \in \mathcal{M}^{H}_{sub} $, $ H_{1}, H_{2},...,H_{s} \in \mathit{2}^{\text{\em Aut}\mathcal{A}(S,M)} \backslash \text{ker}\phi $ and $ H_{1} \le H_{2} \le \cdots \le H_s$. Then for any $ \mathcal{A}(\Sigma_{i}) \in \mathcal{M}^{H_{i}}_{sub}, 1 \leq i \leq s $, there exist $ \mathcal{A}(\Sigma_{j}) \in \mathcal{M}^{H_{j}}_{sub}$ for $ j=1,..., i-1, i+1, ..., s $ such that
	$$ \mathcal{A}(\Sigma_{1}^{(d)}) \ge \cdots \ge \mathcal{A}(\Sigma_{i}^{(d)}) \ge \cdots \ge \mathcal{A}(\Sigma_{s}^{(d)}). $$
	That is, there is an order-preserving reverse Galois map for $ \mathcal{A}(S, M) $.\\
	In particular, if $ H_{1} < H_{2} <\cdots < H_{s} $ and any $ f \in H_{s} $ is orientation-preserved, then we have
	\[ \mathcal{A}(\Sigma_{1}) > \cdots > \mathcal{A}(\Sigma_{i}) > \cdots > \mathcal{A}(\Sigma_{s}). \]
\end{thma}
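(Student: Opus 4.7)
The plan is to combine the constant-rank statement of Theorem \ref{c1} with an inductive construction along the given chain of subgroups. The elementary starting point is purely group-theoretic: $H \le H'$ implies $\mathcal{A}(S,M)^{H'} \subseteq \mathcal{A}(S,M)^{H}$, and therefore by Theorem \ref{c1} the common rank $r(H)$ of maximal cluster subalgebras inside $\mathcal{A}(S,M)^{H}$ satisfies $r(H_1) \ge r(H_2) \ge \cdots \ge r(H_s)$. This is the arithmetic version of what the theorem asserts; the work is in promoting it to a compatible chain of actual subalgebras once one representative $\mathcal{A}(\Sigma_i) \in \mathcal{M}^{H_i}_{sub}$ has been prescribed.

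Given $\mathcal{A}(\Sigma_i)$, I would produce the remaining $\mathcal{A}(\Sigma_j)$ in two directions. For $j>i$, pass inductively from $\Sigma_{j-1}$ to $\Sigma_j$ by first restricting: retain precisely those cluster variables of $\Sigma_{j-1}$ whose associated arcs are fixed by $H_j$; since $H_{j-1}\le H_j$, each such arc is already among the (already $H_{j-1}$-invariant) arcs of $\Sigma_{j-1}$, so we obtain an $H_j$-invariant sub-seed of $\Sigma_{j-1}$ lying in $\mathcal{A}(S,M)^{H_j}$. If this sub-seed does not yet have rank $r(H_j)$, enlarge it by further $H_j$-invariant arcs of $(S,M)$ that are compatible (non-crossing) with those already present, until rank $r(H_j)$ is reached. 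Any cluster subalgebra in $\mathcal{A}(S,M)^{H_j}$ strictly containing this one would have strictly larger rank, contradicting Theorem \ref{c1}, so the resulting $\mathcal{A}(\Sigma_j)$ lies in $\mathcal{M}^{H_j}_{sub}$. For $j<i$, the given $\mathcal{A}(\Sigma_i)$ automatically sits inside $\mathcal{A}(S,M)^{H_j}$; here I would extend its seed to a maximal $H_j$-invariant mixing-type sub-seed of $\mathcal{A}(S,M)$ in the sense of \cite{HuangLiYang18}, again invoking Theorem \ref{c1} to certify maximality once the correct rank $r(H_j)$ is achieved.

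The principal obstacle is the concrete realization of the extension and restriction steps in terms of the surface $(S,M)$: one must verify that $H_j$-invariant arcs of the required count always exist and can be completed into a common triangulation. This is precisely where the feasibility hypothesis on $(S,M)$ is expected to enter, together with the standard fact that a finite orientation-preserving subgroup of the mapping class group acts on the arc complex with a rich invariant subcomplex coming from the quotient orbifold structure; feasibility ensures that invariant triangulations of maximal rank $r(H_j)$ always exist and that partial invariant sub-seeds can be completed compatibly. A secondary technical point is to check that the two-sided construction (going down from $i$ and up from $i$) yields a single globally nested chain rather than two independent ones, which reduces to the observation that both procedures only add or remove $H_j$-invariant arcs coherent with the triangulation $\Sigma_i$.

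For the final "in particular" clause, the argument is short once the chain has been produced. Under the orientation-preserving assumption any strict containment $H_{j-1} < H_j$ necessarily kills at least one $H_{j-1}$-invariant arc that is not $H_j$-invariant, for otherwise every element of $H_j$ would fix the entire $H_{j-1}$-invariant part of a maximal triangulation and therefore act trivially on the full surface up to isotopy, contradicting $H_{j-1}<H_j$. Consequently $r(H_{j-1}) > r(H_j)$, and the inclusion $\mathcal{A}(\Sigma_j) \subseteq \mathcal{A}(\Sigma_{j-1})$ constructed above becomes strict, giving the claimed strict reverse chain.
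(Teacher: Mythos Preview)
Your outline captures the two-sided induction correctly, but there are two genuine gaps that the paper's proof handles and yours does not.

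First, maximality in $\mathcal{A}(S,M)^{H_j}$ is \emph{not} the same as membership in $\mathcal{M}^{H_j}_{sub}$. By definition, $\mathcal{A}(\Sigma_j)\in\mathcal{M}^{H_j}_{sub}$ also requires $\text{Gal}_{\Sigma_j}\mathcal{A}(S,M)=H_j$ exactly, and Theorem~\ref{c1} says nothing about this Galois-group condition. Your rank argument only shows that no larger cluster subalgebra sits inside $\mathcal{A}(S,M)^{H_j}$; it does not rule out $\text{Gal}_{\Sigma_j}\mathcal{A}(S,M)\gneq H_j$. The paper closes this gap via Corollary~\ref{surfacegaloislikecor}, which in turn depends on the maximal-invariant-subsurface machinery (Remark~\ref{maxsubsurfaceproperty-rmk}) built in the proof of Theorem~\ref{c1}, not on any orbifold-quotient argument.

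Second, you never address why the reduced seeds $\Sigma_j^{(d)}$ appear. The paper introduces an index $t$, the largest $t$ such that every $f\in H_t$ is orientation-preserved; for $j>t$, Remark~\ref{extodirect} forces every maximal sub-seed in $\mathcal{A}(S,M)^{H_j}$ to have no exchange variables, so $\Sigma_j^{(d)}$ is trivial and the chain holds automatically on that tail. Your proposal treats all $H_j$ uniformly and invokes orientation-preservation only in the final clause, so the first chain is unsupported for those $j$. Relatedly, in your downward step you ``enlarge by further $H_j$-invariant arcs compatible with those already present'' --- but the added arcs need not come from $\Sigma_{j-1}$, which breaks the containment you want. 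The paper avoids this by showing the maximal invariant subsurface $U_{j}$ sits inside $U_{j-1}$ up to isotopy, so a maximal tagged-arc set of $U_{j}$ can be found inside the region $U_{j-1}'$ determined by $\Sigma_{j-1}$.

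For the strict clause, your argument that $H_j$ would ``act trivially on the full surface'' is not correct; what you actually get is $H_j\le\text{Gal}_{\Sigma_{j-1}}\mathcal{A}(S,M)=H_{j-1}$, which is the contradiction. The paper phrases this more directly: $H_{j-1}\neq H_j$ forces $\mathcal{M}^{H_{j-1}}_{sub}\cap\mathcal{M}^{H_j}_{sub}=\emptyset$, so the (already established) inclusion $\mathcal{A}(\Sigma_{j})\le\mathcal{A}(\Sigma_{j-1})$ must be strict.
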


Here the reduced cluster subalgebras $\mathcal{A}(\Sigma_{i}^{(d)})$ and the  orientation-preserved cluster automorphism are defined in Section \ref{Galois-like-extension-def-subsection} and \ref{surface-conj-subsection}, respectively.

Besides, we also discuss the Galois inverse problem for cluster algebras from feasible surfaces.

\begin{propa}[Proposition \ref{prop-Galois-inverse-problem}]
		 Assume $ \mathcal{A}(S, M) $ is a cluster algebra without coefficient from a feasible surface $(S, M)$. Let $ C $ be a subsurface of $ S $ such that there is  no connected component of  $ U= \overline{S\backslash C}  $ which is a closed annulus, a closed disk, or a closed once-punctured disk and there is a connected component of $U$ which i different from a digon, a closed once-punctured annulus and a closed twice-punctured disk, where $\overline{S\backslash C}$ means the closure of $S\backslash C$. Then we have the following statements:
	
	(i)\; $ \rho'^{-1}(\text{\em Mod}_{\bowtie}(C,C\cap M)) $ is a Galois group.

 (ii)\;	Conversely, for a Galois group $ H  $ whose any element $ f $ is orientation-preserved, there is a subsurface $C$ such that $ H=\rho'^{-1}(\text{\em Mod}_{\bowtie}(C,C\cap M)) $.
	
\end{propa}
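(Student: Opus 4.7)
The plan is to prove both directions by translating between the algebraic language of cluster automorphisms that fix a cluster subalgebra pointwise and the topological language of mapping classes that fix a subsurface, using the identification $\rho'$ between the relevant mapping class group and the cluster automorphism group for feasible surfaces established earlier in Section 4.

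For part (i), I would first choose a triangulation $T$ of $(S,M)$ whose restriction to $C$ triangulates $(C, C\cap M)$; such a $T$ exists because the hypothesis on the complement components of $U$ lets one simultaneously triangulate $C$ and each component of $U$ and glue them along the common boundary arcs. Let $\Sigma_1$ be the sub-seed of $\Sigma_T$ corresponding to the arcs of $T$ lying in $C$. Any mapping class $[\varphi] \in \text{Mod}_{\bowtie}(C, C\cap M)$ is by definition supported on $C$ and acts as the identity on a neighborhood of $\overline{S\setminus C}$, so it fixes every arc of $T\cap C$ up to isotopy; consequently every $f \in \rho'^{-1}(\text{Mod}_{\bowtie}(C, C\cap M))$ fixes each cluster variable of $\Sigma_1$ and hence fixes $\mathcal{A}(\Sigma_1)$ pointwise. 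To conclude that $\mathcal{A}(\Sigma_1)$ is indeed the Galois-like extension associated with this subgroup, I would apply Theorem \ref{crit}, invoking the $\mathcal{D}$-stable basis for cluster algebras from feasible surfaces exhibited in Section 4 and checking the infinite-orbit condition: any cluster or quasi cluster variable outside $\mathcal{A}(\Sigma_1)$ corresponds to an arc meeting the interior of $U$, and the exclusion of closed disk, once-punctured disk, and closed annulus components of $U$ guarantees that $\text{Mod}_{\bowtie}(U, U\cap M)$ contains a Dehn twist (or higher mapping class) of infinite order moving that arc.

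For part (ii), starting from a Galois group $H$ all of whose elements are orientation-preserving, I would pick a maximal cluster subalgebra $\mathcal{A}(\Sigma_1) \in \mathcal{M}_{sub}^H$, which by the surface interpretation combined with Theorem \ref{c1} is generated by a well-defined set of pairwise compatible arcs forming a partial triangulation of $(S,M)$. Defining $C$ as the closure of the union of the triangles cut out by these arcs and the boundary of $S$, one obtains a subsurface whose induced triangulation is exactly the arc set underlying $\Sigma_1$. The orientation-preservation hypothesis lets me argue that a cluster automorphism $f$ fixes every cluster variable of $\Sigma_1$ iff $\rho'(f)$ fixes every arc of this partial triangulation up to isotopy, which is equivalent to $\rho'(f)$ being isotopic to a mapping class supported on $C$; combining with part (i) applied to this $C$ then yields $H = \rho'^{-1}(\text{Mod}_{\bowtie}(C, C\cap M))$.

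The main obstacle I foresee is in part (ii): given only the algebraic data of $H$, one must reconstruct a topologically sensible subsurface $C$ and verify that the complement $U$ automatically meets the required hypotheses. In particular, one has to rule out the possibility that $H$ accidentally contains cluster automorphisms coming from mapping classes supported on forbidden complement components, and this is precisely where the orientation-preservation assumption becomes essential, since orientation-reversing involutions of such components would produce fixed subalgebras of unexpectedly lower rank and break the correspondence. Handling this robustly will require combining the infinite-orbit criterion of Theorem \ref{crit}, the rank invariance of Theorem \ref{c1}, and a careful case analysis of the small complement components excluded in the statement, and I expect this combinatorial bookkeeping, rather than the conceptual translation, to be the most delicate portion of the argument.
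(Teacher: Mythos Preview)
Your proposal contains a fundamental confusion about the roles of $C$ and $U$ that makes the argument fail as written. In the statement, $\text{Mod}_{\bowtie}(C,C\cap M)$ consists of mapping classes \emph{supported on} $C$; such classes act as the identity on $U=\overline{S\setminus C}$ and therefore fix the arcs lying in $U$, not the arcs lying in $C$. Your sentence ``acts as the identity on a neighborhood of $\overline{S\setminus C}$, so it fixes every arc of $T\cap C$'' draws exactly the wrong conclusion from the correct premise. Consequently the sub-seed $\Sigma_1$ should be built from the arcs of $T$ lying in $U$, not in $C$, and in part~(ii) the subsurface you construct from the fixed arcs is $U$, with $C$ defined as its complement $\overline{S\setminus U}$. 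Once the roles are swapped, the hypotheses on the components of $U$ (no closed disk, once-punctured disk, or annulus) are precisely what make the inclusion $\text{Mod}_{\bowtie}(C,C\cap M)\hookrightarrow \mathcal{MCG}_{\bowtie}(S,M)$ injective, and the further exclusion of digons, once-punctured annuli, and twice-punctured disks as the \emph{only} components of $U$ is what guarantees that $G$ is orientation-preserved.

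Your plan to invoke Theorem~\ref{crit} and the $\mathcal{D}$-stable bases is also problematic: the bangle/bracelet/band bases in Section~4.1 are only established for surfaces \emph{without punctures}, whereas feasible surfaces may have punctures, so this route does not cover the general case. Moreover, Theorem~\ref{crit} concerns Galois \emph{extensions} ($\mathcal{A}^H=\mathcal{A}(\Sigma_1)$), which is strictly stronger than what is asked here (that $H$ is a Galois \emph{group}, i.e.\ $H=\text{Gal}_{\Sigma_1}\mathcal{A}$ for some $\Sigma_1$). The paper instead uses the notion of the \emph{maximal invariant subsurface} of $H$ (Definition~\ref{def-maxsubsurface} and Remark~\ref{maxsubsurfaceproperty-rmk}, extracted from the proof of Theorem~\ref{c1}) together with Corollary~\ref{surfacegaloislikecor}: for part~(i) one lets $U'$ be the maximal invariant subsurface of $G=\rho'^{-1}(\text{Mod}_{\bowtie}(C,C\cap M))$, observes $U\subset U'$, and uses Remark~\ref{maxsubsurfaceproperty-rmk} to identify $\text{Gal}_{\Sigma'}\mathcal{A}(S,M)$ with the stabilizer of $U'$, hence of $U$, hence with $G$; for part~(ii) one takes $U$ to be the maximal invariant subsurface of $H$ and sets $C=\overline{S\setminus U}$.
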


Here the definition of subsurfaces is given in Section \ref{surface-conj-subsection}, $\text{\em Mod}_{\bowtie}(C,C\cap M)$ is a subgroup of the mapping class group of $(C,C\cap M)$ which is also given in Section \ref{surface-conj-subsection} and $\rho'$ is the isomorphism between the mapping class group and cluster automorphism group, see Theorem \ref{mcg}.

\section{Preliminaries on cluster algebras and Riemann oriented surfaces}

In this section, we introduce some background materials of cluster algebras, the cluster algebras from bordered surfaces with marked points, and their mapping class groups.

Let $ \mathcal{F} $ be the field of rational polynomials over $ \mathbb{Q} $ in $ m $ independent variables, the \textbf{cluster algebra} $ \mathcal{A} $ is a $ R- $subalgebra of $ \mathcal{F} $, where $ R=\mathbb{Q}[x_{n+1},...,x_{m}] $, the ring of polynomials of the frozen variables.

\begin{defi}
	A seed in $ \mathcal{F} $ is a pair $ (\textbf{\em x}_{ex},\textbf{\em x}_{fr},\widetilde{B}) $ where
	
	(1) $ \textbf{\em x}_{ex}=(x_{1},...,x_{n}) , \textbf{\em x}_{fr}=(x_{n+1},...,x_{m})$ and $ x_{1},..., x_{m} $ are algebraically independent in $ \mathcal{F} $;
	
	(2) $ \widetilde{B} $ is an $ m \times n $ extended skew-symmetrizable integer matrix, i.e., there exists a diagonal non-negative integer matrix $ D $ such that $ (DB)^{\top}=-DB $, where $ B $ is the upper $ n \times n $ submatrix of $ \widetilde{B}$, called the \textbf{exchange matrix}; $\widetilde{B} $ is called the \textbf{extended exchange matrix} and $ D $ is called the\textbf{ skew-symmetrizer}. Meanwhile, the lower $ n \times n $ submatrix of $ \widetilde{B}$, denoted as $C$, is called the {\bf $C$-matrix}.
	
	(3) $\textbf{\em x}= \textbf{\em x}_{ex} \cup \textbf{\em x}_{fr} $ is called the \textbf{cluster} of the seed $ (\textbf{\em x}_{ex},\textbf{\em x}_{fr},\widetilde{B}) $. The elements of $ \textbf{\em x}_{ex} $ are called \textbf{exchange variables} and the elements of $ \textbf{\em x}_{fr} $ are called \textbf{frozen variables}. The exchange variables and frozen variables are collectively called \textbf{cluster variables}. The monomials consisting of cluster variables (or exchange variables) in a cluster are called \textbf{extended cluster monomials} (or \textbf{cluster monomials}).
\end{defi}

We say a cluster algebra to be {\bf with coefficients}, if the set of frozen variables in the cluster algebra is not empty, otherwise we say the cluster algebra to be {\bf without coefficient}.
\begin{defi}
	Let $ (\textbf{\em x}_{ex},\textbf{\em x}_{fr},\widetilde{B}) $ be a seed in $ \mathcal{F} $, $ k \in \{1,...,n \} $ The\textbf{ seed mutation} $ \mu_{k} $ transforms $(\textbf{\em x}_{ex},\textbf{\em x}_{fr},\widetilde{B}) $ to the new seed $ \mu_{k}(\textbf{\em x}_{ex},\textbf{\em x}_{fr},\widetilde{B}) = (\textbf{\em x}_{ex}',\textbf{\em x}_{fr}',\widetilde{B'})$ defined as follows:
	
	(1) $ \widetilde{B'}=(b_{ij}')_{m \times n} $, where
	$$ b_{ij}'= \left\{
	\begin{array}{ll}
		-b_{ij} & \text{if } i=k \text{ or } j=k; \\
		b_{ij}+ sgn(b_{ik})max(b_{ik}b_{kj},0) & \text{otherwise}.
	\end{array}
	\right. $$
	
	(2) $ (\textbf{\em x}_{ex}',\textbf{\em x}_{fr}')=(x_{1}',...,x_{m}') $, where
	$$ x_{i}'= \left\{
	\begin{array}{ll}
		x_{i} & \text{if } i \neq k ; \\
		x_{i}^{-1}(\prod_{b_{jk} > 0}x^{b_{jk}}_{j} + \prod_{b_{jk} <0 }x^{-b_{jk}}_{j} ) & \text{otherwise}.
	\end{array}
	\right. $$
\end{defi}

In the sequel we also denote by $ \mu_{x_{k}}= \mu_{k} $ and $ b_{x_{i}x_{j}}=b_{ij} $, for $ x_{k},x_{i} \in \textbf{x}, \, x_{j} \in \textbf{x}_{ex} $.

Let $ \mathcal{X} $ be the set of all cluster variables obtained from the initial seed $ \Sigma $ by finite steps of mutations, the {\bf cluster algebra} $ \mathcal{A} $ is defined as the $ \mathbb{Q}- $subalgebra of $ \mathcal{F} $ generated by $ \mathcal{X} $. The {\bf rank} of $\mathcal A$ is defined as the number of exchange variables in any seed $\Sigma$.

Note that if $ D $ is the skew-symmetrizer of $ \widetilde{B} $, it is also the skew-symmetrizer of $ \widetilde{B'} $.

In a cluster algebra, any exchange variable can be expressed in terms of any given cluster as a Laurent polynomial with coefficients in $ R $, which is called the \textbf{Laurent phenomenon } in the cluster algebra.
It is proved in \cite{GHKK18} that the coefficients in the Laurent polynomial mentioned above are positive.

Let $\mathbb{T}_n $ be the $n$-regular tree whose $n$ edges emanating from the same vertex are labelled bijectively by $1,...,n$. We assign a seed to each vertex of $\mathbb{T}_n$ such that if two vertices are connected by an edge labelled $k$, then the seeds assigned to them are obtained from each other by the mutation at direction $k$. This assignment is called a \textbf{cluster pattern}.

Let $ \mathcal{S} $ be a cluster pattern, $ \mathcal{L}(t)= \mathbb{Q}[x_{1;t}^{\pm 1}, ..., x_{m;t}^{\pm 1}]$, where $ t \in \mathbb{T}_{n} $ and $ x_{1;t}, ..., x_{m;t} $ are all cluster variables in the seed of $ \mathcal{S} $ corresponding to $ t $. The \textbf{upper cluster algebra} associated with $ \mathcal{S} $ is defined as \[ \mathcal{U}=\bigcap_{t \in \mathbb{T}_{n}}\mathcal{L}(t) . \]

Let $ \mathcal{A}(\Sigma) $ be a cluster algebra, where initial seed $ \Sigma=(\textbf{x}_{ex},\textbf{x}_{fr},\widetilde{B}_{m \times n} ) $, $ \textbf{x}_{ex}=\{ x_{1},...,x_{n} \} $, $\textbf{x}_{fr}=\{x_{n+1},...,x_{m} \} $. We denote by
\begin{equation}\label{exmatrix}
	\widetilde{B}=
	\left(
	\begin{array}{cc}
		B_{n \times n}\\
		C_{(m-n) \times n}
	\end{array}
	\right).
\end{equation}

A matrix is called \textbf{column sign-coherent} if any two nonzero entries of this matrix in the same column have the same sign.

Following the notions given above, $ C_{(m-n) \times n} $ is called \textbf{uniformly column sign-coherent} with respect to the exchange matrix $ B_{n\times n} $ if $ C_{(m-n) \times n} $ is column sign-coherent and after any sequence of mutations, the lower $ (m-n) \times n $ submatrix is column sign-coherent.

A \textbf{cluster algebra with principal coefficients} is a cluster algebra $ \mathcal{A}(\Sigma) $ with initial seed $ \Sigma=(\textbf{x}_{ex},\textbf{x}_{fr}, \widetilde{B}_{2n \times n} )$, where $ \textbf{x}_{ex}=\{x_{1},x_{2},...,x_{n} \} $, $ \textbf{x}_{fr}= \{ x_{n+1},x_{n+2},...,x_{2n} \}$ and the lower $ n \times n $ submatrix $B $ of $ \widetilde{B} $ is the identity matrix.

It was proved in \cite{GHKK18} that $C$-matrices of a skew-symmetrizable cluster algebra $\mathcal A$ with principal coefficients are always uniformly column sign-coherent.

For a skew-symmetrizable cluster algebra $ \mathcal{A}(\Sigma)$ with principal coefficients, there is a $ \mathbb{Z}^{n} $-grading structure which is defined by
\[ deg(x_{i})= \textbf{e}_{i}, \; deg(x_{n+j}) = -\textbf{b}_{j}, \, i,j=1,...,n, \]
where $ \{e_{1}, ..., e_{n}\} $ is the standard basis for $ \mathbb{Z}^{n} $ and $ \textbf{b}_{j} $ is the $ j$-th column of the upper $n\times n$ submatrix $B$ of $ \widetilde{B} $. It was proved in \cite{FZ07} that Laurent polynomials of exchange variables in $ \mathcal{A}(\Sigma) $ for $ \textbf{x}=\textbf{x}_{ex}\sqcup \textbf{x}_{fr} $ are homogeneous with respect to this $ \mathbb{Z}^{n} $-grading, and the $ \mathbb{Z}^{n} $-grading degree of an exchange variable is called its \textbf{$ g $-vector}. For a cluster $ \textbf{x}' $, we denote by $ G^{\widetilde{B}}_{\textbf{x}'} $ the matrix whose columns consisting of $ g$-vectors of all exchange variables in $ \textbf{x}' $, called the \textbf{$ G$-matrix} of the cluster $ \textbf{x}' $. Trivially, $ G^{\widetilde{B}}_{\textbf{x}'} $ is an $n\times n$ matrix.

The \textbf{$ F $-polynomial }of an exchange variable in the principal coefficients cluster algebra $ \mathcal{A}(\Sigma) $ is the polynomial obtained by specializing all of initial exchange variables to 1 in the Laurent polynomial of this exchange variable for the initial cluster. If the $ F $-polynomials of two exchange variables are the same, then these two exchange variables are the same, see \cite{LiPan22}.

For an exchange variable $ x' $ and a cluster $ \textbf{x}=\textbf{x}_{ex} \sqcup \textbf{x}_{fr} $ in an arbitrary cluster algebra $ \mathcal{A} $ with $ \textbf{x}_{ex}= \{ x_{1},...,x_{n} \} $ and $ \textbf{x}_{fr}=\{ x_{n+1},...,x_{m} \} $, we can represent $ x'$ as
$$ \frac{P(\textbf{x})}{x_{1}^{d_{1}} \cdots x_{n}^{d_{n}} },$$
where $ P(\textbf{x}) $ is a polynomial of $ \textbf{x} $ which is not divided by any cluster variable in $ \textbf{x}$. The \textbf{denominator vector} {\bf d} of $ x' $ for $ \textbf{x} $ is defined as {\bf d}$=(d_{1},...,d_{n})^{\top}$. In fact, the component $ d_{i} $ of {\bf d} corresponding to $ x_{i} $ is independent of the cluster $ \textbf{x} $, see \cite{CaoLi20a}. Thus we define the compatibility degree $ d(x_{i},x') $ of $ x' $ with respect to $ x_{i} $ as $ d_{i} $.

Let $ \mathcal{A}_{1} $ and $ \mathcal{A}_{2} $ be cluster algebras with coefficients. A cluster morphism between them is defined as follows:

\begin{defi}\cite{HuangLiYang18}
	Let $ \Sigma_{1}, \Sigma_{2} $ are seeds of cluster algebras $ \mathcal{A}_{1} $ and $ \mathcal{A}_{2} $ respectively, where $ \Sigma_{1}=(\textbf{\em x}_{ex,1},\textbf{\em x}_{fr,1},$ $\widetilde{B}_{1,m_{1} \times n_{1}}), $ $ \Sigma_{2}=(\textbf{\em x}_{ex,2},\textbf{\em x}_{fr,2},\widetilde{B}_{2,m_{2} \times n_{2}}) $, $ \textbf{\em x}_{ex,i}=\{ x_{i,1},x_{i,2},...,x_{i,n_{i}} \},i=1,2 $ are exchange variables and $ \textbf{\em x}_{fr,i}= \{ x_{i,n+1},x_{i,n+2},...,$ $x_{i,m_{i}} \}, i=1,2 $ are frozen variables. Suppose $ f: \mathcal{A}_{1}\rightarrow \mathcal{A}_{2} $ is a morphism of algebras, if it satisfies the following conditions:
	
	(1) $ f(\textbf{\em x}_{ex,1}\sqcup \textbf{\em x}_{fr,1}) \subseteq \textbf{\em x}_{ex,2}\sqcup \textbf{\em x}_{fr,2}\sqcup \mathbb{Z} $;
	
	(2) $ f(\textbf{\em x}_{ex,1}) \subseteq \textbf{\em x}_{ex,2} \sqcup \mathbb{Z} $;
	
	(3) If $ (y_{1},y_{2},...,y_{s}) $ is a $(f, \Sigma_{1}, \Sigma_{2}) $ bi-admissible sequence, i.e., $ y_{i}$ is an exchange variable in $ \mu_{y_{i-1}}...\mu_{y_{1}}(\Sigma_{1}) $ and $ f(y_{i}) $ is an exchange variable in $ \mu_{f(y_{i-1})}...$ $ \mu_{f(y_{1})}(f(\Sigma_{1})) $, $ i=1,...,s $, assume $ \mu_{y_{0}}(\Sigma_{1})=\Sigma_{1} $ and $ \mu_{f(y_{0})}(f(\Sigma_{1}))=f(\Sigma_{1}) $, we have
	\[ f(\mu_{y_{s}}...\mu_{y_{1}}(y))=\mu_{f(y_{s})}...\mu_{f(y_{1})}(f(y)), \]
	then we call $ f $ a \textbf{cluster morphism} bewteen $ \mathcal{A}_{1} $ and $ \mathcal{A}_{2} $.
\end{defi}

If a cluster morphism $ f $ between $ \mathcal{A}_{1} $ and $ \mathcal{A}_{2} $ is monomorphic (epimorphic, isomorphic) as a morphism of algebras, we say it to be cluster monomorphic (epimorphic, isomorphic).

Let $ \mathcal{A}(\Sigma) $ be a cluster algebra, where initial seed $ \Sigma=(\textbf{x}_{ex},\textbf{x}_{fr},\widetilde{B}_{m \times n} ) $, $ \textbf{x}_{ex}=\{ x_{1},...,x_{n} \} $, $\textbf{x}_{fr}=\{x_{n+1},...,x_{m} \} $, $ f \in \text{Aut} \mathcal{A}(\Sigma) $. We denote by
\begin{equation}\label{exmatrix}
	\widetilde{B}=
	\left(
	\begin{array}{cc}
		B_{n \times n}\\
		C_{(m-n) \times n}
	\end{array}
	\right),
	\widetilde{B}_{f}=
	\left(
	\begin{array}{cc}
		B_{f,n \times n}\\
		C_{f,(m-n) \times n}
	\end{array}
	\right),
\end{equation}
where $ \widetilde{B}_{f} $ is the skew-symmetrizable matrix corresponding to the cluster $ f(\textbf{x}_{ex}\cup \textbf{x}_{fr}) $, which can be obtained by relabelling indexes of the extended exchange matrix corresponding to $ f(\textbf{x}_{ex}\cup \textbf{x}_{fr}) $ in a seed in $ \mathcal{A}(\Sigma) $, and the $ i $ row (or $ j $ column ) corresponds to $ f(x_{i}) $ (or $ f(x_{j}) $). As in \cite{ASS14}, we have
\[
\left(
\begin{array}{cc}
	B\\
	C
\end{array}
\right)
=
\left(
\begin{array}{cc}
	B_{f}\\
	C_{f}
\end{array}
\right)
\text{or}
\left(
\begin{array}{cc}
	B\\
	C
\end{array}
\right)
=
-\left(
\begin{array}{cc}
	B_{f}\\
	C_{f}
\end{array}
\right).
\]
If the former holds, we call $ f $ a \textbf{direct cluster automorphism} and denote by the subgroup consisting of direct cluster automorphisms $ {\rm Aut}^{+}\mathcal{A}(\Sigma) $, otherwise $ f $ is called an \textbf{inverse cluster automorphism}.

\begin{defi}
	Let $\mathcal{A}_{2} $ and $ \mathcal{A}_{1} $ be cluster algebras such that $ \mathcal{A}_{2} \subset \mathcal{A}_{1} $. If the embedding from $ \mathcal{A}_{2} $ to $ \mathcal{A}_{1} $ is a cluster monomorphism, we say $ \mathcal{A}_{2} $ to be a \textbf{cluster subalgebra} of $ \mathcal{A}_{1} $.
\end{defi}

In \cite{HuangLiYang18} the cluster subalgebra was defined as the preimage of a monomorphic cluster morphism. But in this article, we require that the cluster subalgebra be a subalgebra of a cluster algebra which is different from \cite{HuangLiYang18}, since we only care about the cluster subalgebras which are fixed under the action of subgroups of cluster automorphism groups.

In fact, for a cluster algebra $ \mathcal{A} $ and its cluster subalgebra $ \mathcal{A}(\Sigma') $, the initial seed $ \Sigma' $ can be obtained by a sequence of operations on a seed belonging to $ \mathcal{A}$, see \cite{HuangLiYang18}, which is given as follows.

\begin{defi}\cite{HuangLiYang18}
	Let $ \Sigma=(\textbf{\em x}_{ex},\textbf{\em x}_{fr},\widetilde{B}_{m \times n}) $ be a seed of a cluster algebra $\mathcal A$, Assume $I_{0} $ and $ I_{1} $ are subsets of $ \textbf{\em x}_{ex} \sqcup \textbf{\em x}_{fr} $, such that $ I_{0} \subseteq \textbf{\em x}_{ex}, I_{1} \subseteq \textbf{\em x}_{ex}\sqcup \textbf{\em x}_{fr} $ and $ I_{0} \cap I_{1} = \emptyset $. Denote by $ \textbf{\em x}_{ex}^{\sharp}=\textbf{\em x}_{ex} \backslash( I_{0}\cup I_{1}),\textbf{\em x}_{fr}^{\sharp}=(\textbf{\em x}_{fr} \cup I_{0}) \backslash I_{1}$, and $\widetilde{B}^{\sharp}=(b_{ij}^{\sharp}) $ is a matrix with $ \sharp (\textbf{\em x}_{ex}^{\sharp} \sqcup \textbf{\em x}_{fr}^{\sharp}) \times \sharp \textbf{\em x}_{ex}^{\sharp} $ order, such that $ b^{\sharp}_{xy}=b_{xy}$, for $x\in \textbf{\em x}_{ex}^{\sharp} \sqcup \textbf{\em x}_{fr}^{\sharp}, y \in \textbf{\em x}_{ex}^{\sharp} $. Then we can define a new seed $\Sigma_{I_{0},I_{1}} = (\textbf{\em x}_{ex}^{\sharp},\textbf{\em x}_{fr}^{\sharp},\widetilde{B}^{\sharp}) $, which is called a \textbf{mixing-type sub-seed} or $ (I_{0},I_{1}) $-type sub-seed of the seed $ \Sigma $.
\end{defi}

The initial seed of a cluster subalgebra of a cluster algebra $ \mathcal{A} $ is a $ (I_{0}, I_{1})$-type sub-seed of some seed of $ \mathcal{A} $. Formally, we have the following theorem.

\begin{thm}\cite{HuangLiYang18}\label{subseed} A cluster algebra
	$ \mathcal{A}' $ is a cluster subalgebra of a cluster algebra $ \mathcal{A} $ if and only if there exists a seed $ \Sigma'$ belonging to $ \mathcal{A}'$ such that $ \Sigma'=\Sigma_{I_{0},I_{1}} $ for some seed $ \Sigma=(\textbf{\em x}_{ex},\textbf{\em x}_{fr},\widetilde{B}) $ of $ \mathcal{A} $ satisfying $ b_{xy}=0$, for $ x \in I_{1}$ and $ y \in \textbf{\em x}_{ex} \backslash ( I_{0} \cup I_{1} )$.
\end{thm}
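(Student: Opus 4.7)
The plan is to prove the two implications separately; both rely on matching exchange relations inside the ambient field $\mathcal F$ and exploiting algebraic independence of a cluster.

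For the ``if'' direction, I would start from $\Sigma = (\textbf{x}_{ex},\textbf{x}_{fr},\widetilde B)$ and a mixing-type sub-seed $\Sigma_{I_0,I_1} = (\textbf{x}_{ex}^\sharp,\textbf{x}_{fr}^\sharp,\widetilde B^\sharp)$ satisfying $b_{xy}=0$ whenever $x\in I_1$ and $y\in\textbf{x}_{ex}^\sharp$, and define $\iota:\mathcal{A}(\Sigma_{I_0,I_1})\to\mathcal A$ as the natural inclusion inside $\mathcal F$. It is clearly an algebra monomorphism, so the only real task is to check the cluster-morphism axioms. The key calculation is that when one mutates $\Sigma$ at a direction $y\in\textbf{x}_{ex}^\sharp$, the exchange relation
\[ y\cdot y' \;=\; \prod_{b_{xy}>0} x^{b_{xy}} \;+\; \prod_{b_{xy}<0} x^{-b_{xy}} \]
receives no contribution from variables in $I_1$ by hypothesis, hence only involves variables in $\textbf{x}^\sharp = \textbf{x}_{ex}^\sharp\sqcup\textbf{x}_{fr}^\sharp$. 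By the definition of $\widetilde B^\sharp$, this is exactly the exchange relation obtained by mutating $\Sigma_{I_0,I_1}$ at $y$, so the new cluster variable $y'$ coincides on both sides. Induction on the length of a bi-admissible sequence then extends the commutation to every admissible mutation.

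For the ``only if'' direction, let $\iota:\mathcal A'\hookrightarrow\mathcal A$ be the given cluster monomorphism, with $\Sigma'=(\textbf{y}_{ex},\textbf{y}_{fr},\widetilde B')$ and $\Sigma=(\textbf{x}_{ex},\textbf{x}_{fr},\widetilde B)$ the initial seeds relative to which the defining conditions of a cluster morphism are formulated. Conditions (1) and (2) in the definition, combined with injectivity (a cluster variable is algebraically independent over $\mathbb Q$ and therefore cannot map to an integer), force $\iota(\textbf{y}_{ex})\subseteq\textbf{x}_{ex}$ and $\iota(\textbf{y}_{fr})\subseteq\textbf{x}_{ex}\sqcup\textbf{x}_{fr}$. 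I would then set
\[ I_0 := \iota(\textbf{y}_{fr})\cap\textbf{x}_{ex}, \qquad I_1 := (\textbf{x}_{ex}\sqcup\textbf{x}_{fr})\setminus\iota(\textbf{y}_{ex}\sqcup\textbf{y}_{fr}), \]
so that $\textbf{x}_{ex}^\sharp=\iota(\textbf{y}_{ex})$ and $\textbf{x}_{fr}^\sharp=\iota(\textbf{y}_{fr})$, and $\iota$ identifies $\Sigma'$ with $\Sigma_{I_0,I_1}$ at the level of clusters. To recover both the matrix identity $b'_{y_jy_i}=b_{\iota(y_j)\,\iota(y_i)}$ and the vanishing $b_{x,\iota(y_i)}=0$ for $x\in I_1$, I would compare two expressions for the exchange relation at $y_i$: the one produced intrinsically in $\Sigma'$ and pushed through $\iota$, versus the one produced by mutating $\Sigma$ at $\iota(y_i)$. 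Cluster-commutation of $\iota$ forces these binomials to coincide in $\mathcal F$, and algebraic independence of $\textbf x$ then yields equality of monomial supports and exponents; since the pushed-forward expression only involves $\iota(\textbf y)$, every variable of $I_1$ must occur with zero exponent in the $\Sigma$-side relation, which is precisely the required condition.

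The main obstacle I anticipate is the monomial-matching step in the necessity direction: an equality of two binomials in $\mathbb Z[\textbf x^{\pm1}]$ must be shown to force matching of individual monomials, which uses that each binomial consists of two monomials of opposite sign with disjoint variable supports, a standard feature of exchange relations. A possible swap between the positive and negative parts, if it occurs, corresponds to a global column sign flip and is absorbed by the mixing-type sub-seed construction. A minor technical point is that the seed $\Sigma$ of $\mathcal A$ in the necessity direction is not arbitrary but is dictated by the initial data for $\iota$ as a cluster morphism.
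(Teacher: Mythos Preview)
The paper does not give a proof of this theorem: it is quoted verbatim from \cite{HuangLiYang18} and used as a black box throughout. So there is no ``paper's own proof'' to compare against, and your proposal should be read as an independent reconstruction of the argument in that reference.

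Your outline is the natural one and is essentially what one finds in \cite{HuangLiYang18}. Two points deserve more care.

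In the ``if'' direction, the induction on the length of a bi-admissible sequence only goes through if, after mutating $\Sigma$ at some $y\in\textbf{x}_{ex}^\sharp$, the vanishing hypothesis $b_{xy'}=0$ for $x\in I_1$ and $y'\in\textbf{x}_{ex}^\sharp$ still holds for the mutated matrix. This is true---since $b_{xy}=0$ the mutation rule gives $b'_{xz}=b_{xz}+\operatorname{sgn}(b_{xy})\max(b_{xy}b_{yz},0)=b_{xz}=0$---but you should state it explicitly, as it is exactly what makes the sub-seed close up under repeated mutation rather than just a single one.

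In the ``only if'' direction, your handling of the possible swap of the two monomials is not quite right. A column sign flip is \emph{not} absorbed by the mixing-type sub-seed construction: by definition $b^\sharp_{xy}=b_{xy}$ verbatim, so if the comparison of exchange binomials only pins down the $y_i$-column up to sign, you have not yet produced $\Sigma'=\Sigma_{I_0,I_1}$ on the nose. The usual way out is to observe that such sign ambiguities can only occur on whole connected components of the exchange matrix (by skew-symmetrizability of both $B'$ and the principal part of $\widetilde B$), and then to use the freedom in the statement---``there exists a seed $\Sigma$ of $\mathcal A$''---together with a suitable mutation or the $B\mapsto -B$ symmetry to align the signs component by component. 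As written, your last sentence asserts the conclusion without supplying this step.
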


Thus from now on we call a cluster algebra $ \mathcal{A}(\Sigma') $ a cluster subalgebra of a cluster algebra $\mathcal A= \mathcal{A}(\Sigma) $, if $ \Sigma' $ is a mixing-type sub-seed of $ \Sigma $ satisfying the condition of Theorem \ref{subseed}.

A \textbf{marked surface} is a pair $ (S, M) $, where $ S $ is a Riemann surface and $ M $ consists of finite points of $ S $ called {\bf marked points} of $ S $ such that each boundary component contains at least one marked point. We also call $ (S, M) $ a {\bf surface} for short. The points in $ M \backslash \partial S $ are called {\bf punctures}, where $ \partial S $ is the boundary of $ S $.

It should be noted that all surfaces $(S, M)$ mentioned in this article are connected oriented Riemann surfaces $ S$ with finite boundary components. Besides, following \cite{FST08}, we require all surfaces to be not one of the following:

{\em A sphere with 1, 2, or 3 marked points, an unpunctured disk with 1,2 or 3 marked points on the boundaries, or the once-punctured disk with 1 marked point on the boundaries.}

Cluster algebras from surfaces are a particular type of cluster algebras whose exchange matrices are determined by a triangulation of surfaces.

 A curve $ \gamma $ in $ S $ is a continuous map from the unit interval to $ S $. We can assign a natural orientation on $\gamma$ from $ \gamma(0) $ to $\gamma(1)$. The curve with the reverse orientation is denoted by $\gamma^{-1}$. 	

\begin{defi}\label{arc}
	An \textbf{arc} $ \gamma $ in $ (S,M) $ is a curve (up to isotopy) in $ S $ such that
	
	(i) the endpoints of $\gamma $ lie in $ M $;
	
	(ii) $ \gamma $ does not intersect with boundary of $ S $ except endpoints;
	
	(iii) $ \gamma $ does not cut out an unpunctured monogon or an unpunctured digon;
	
	(iv) $ \gamma $ does not intersect with itself.
\end{defi}

If a curve $ \gamma $ satisfies (i), (ii), (iii) of Definition \ref{arc}, then $ \gamma $ is called a \textbf{generalized arc}. If a curve $\alpha$ is isotopic to a segment of the boundary of $S$ such that endpoints are in $M$, then $\alpha$ is called a {\bf boundary arc} of $S$.

Two arcs in $ (S, M) $ are called \textbf{compatible} if there are curves in their respective isotopy classes that do not intersect. A maximal collection of distinct pairwise compatible arcs is called an \textbf{ideal triangulation}.

\begin{defi}
	An \textbf{tagged arc} is an arc in which each endpoint has been tagged plain or notched, so that the following conditions are satisfied:
	
	(i) the arc does not cut out a once-punctured monogon;
	
	(ii) all endpoints lying on the boundaries are tagged plain;
	
	(iii) if its endpoints are the same, they are tagged in the same way.
\end{defi}
Two tagged arcs $ \alpha $ and $ \beta $ are called \textbf{compatible} if the plain arcs $ \bar{\alpha} $, $ \bar{\beta} $ obtained by forgetting their taggings are compatible and satisfy

(a) if $ \bar{\alpha} = \bar{\beta}$, then at least one endpoint of $ \bar{\alpha} $ and $ \bar{\beta} $ is tagged in the same way;

(b) if $ \bar{\alpha} \neq \bar{\beta}$, but they have common endpoints, then they are tagged in the same way at common endpoints.

Similarly, a maximal collection of distinct pairwise compatible tagged arcs in $ (S, M) $ is called A \textbf{tagged triangulation} of $ (S, M) $.

In fact, each ideal triangulation (resp. tagged triangulation) has the same number of tagged arcs (resp. tagged arcs).
\begin{prop}\cite{FST08} \label{narc}
	Each ideal triangulation (resp. tagged triangulation) of $ (S, M) $ consists of
	$ n=6g+3b+3p+c-6 $
	arcs (resp. tagged arcs), where $ g $ is the genus of $ (S,M) $, $ b $ is the number of boundary components, $ p $ is the number of punctures, and $ c $ is the number of marked points on the boundaries.
\end{prop}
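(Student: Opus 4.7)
The plan is to derive the count from the Euler characteristic of $S$. Given an ideal triangulation $T$ of $(S,M)$, I would regard it as a CW decomposition of $S$: the $0$-cells are the marked points in $M$, the $1$-cells are the arcs of $T$ together with the boundary segments between consecutive marked points on $\partial S$, and the $2$-cells are the ideal triangles. Writing $n$ for the number of arcs, I read off $V=p+c$ and $E=n+c$ (each boundary component with $k$ marked points contributes exactly $k$ boundary segments, for a total of $c$). The face count comes from the double-counting identity $3F=2n+c$: each triangle contributes three edge-sides, each arc is incident to two triangle-sides (with the inner arc of a self-folded triangle contributing its two sides to that same triangle), and each boundary segment to exactly one.

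Substituting into Euler's formula for an orientable surface of genus $g$ with $b$ boundary components,
\[
V-E+F=\chi(S)=2-2g-b,
\]
yields
\[
(p+c)-(n+c)+\tfrac{2n+c}{3}=2-2g-b,
\]
and clearing denominators gives $n=6g+3b+3p+c-6$, which is the claim for ideal triangulations.

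For tagged triangulations, the plan is to reduce to the ideal case. The key input from \cite{FST08} is that any two tagged triangulations of $(S,M)$ are connected by a sequence of tagged flips, each of which replaces exactly one tagged arc by another; hence all tagged triangulations of $(S,M)$ share a common cardinality. It therefore suffices to exhibit one tagged triangulation with $n$ tagged arcs: starting from an ideal triangulation $T$ (after flipping, if necessary, so that no arc cuts out a once-punctured monogon), tagging every endpoint plain produces a valid tagged triangulation with the same number of arcs as $T$, and by the ideal count this number is $6g+3b+3p+c-6$.

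The main delicate point is ensuring that the incidence identity $3F=2n+c$ is correctly maintained in the presence of self-folded triangles (where two of a triangle's three edge-sides coincide in the surface) and, on the tagged side, that the passage from an ideal to a tagged triangulation does not change the number of arcs at punctures where notched tags would otherwise be forced. Both points are standard but need to be checked carefully to avoid off-by-one errors; once they are in place, the Euler characteristic computation above is essentially the entire argument.
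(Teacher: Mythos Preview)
The paper does not prove this proposition; it is simply quoted from \cite{FST08} as background, so there is no in-paper argument to compare against. Your Euler-characteristic computation is the standard proof and is correct: the identities $V=p+c$, $E=n+c$, $3F=2n+c$ together with $\chi(S)=2-2g-b$ yield $n=6g+3b+3p+c-6$ directly, and you are right that the self-folded case is the only place where the incidence count $3F=2n+c$ needs a moment's thought.

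One small correction in the tagged part: the parenthetical ``after flipping, if necessary, so that no arc cuts out a once-punctured monogon'' is not always achievable---some surfaces (e.g.\ the twice-punctured sphere, or more generally when a puncture is enclosed in a way that forces a self-fold) admit no ideal triangulation without self-folded triangles. The clean way around this is the canonical map $\tau$ from \cite{FST08}, which sends an ideal triangulation to a tagged one by replacing each self-folded loop with the notched copy of its inner arc; this is a bijection onto its image and preserves cardinality, so the reduction to the ideal count still works. With that adjustment your argument is complete.
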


Suppose $ T $ is a triangulation (resp. tagged triangulation) of $ (S, M) $, and $ \gamma \in T $. Then there is another arc (resp. tagged arc) $ \gamma' $ such that $ T' =(T \backslash \{ \gamma \}) \cup \{ \gamma' \} $ is another triangulation (resp. tagged triangulation) of $ (S,M) $. And the way to obtain $ T' $ from $ T $ is called the \textbf{flip} at $ \gamma $.

Suppose $ \mathcal{A} $ is the cluster algebra whose exchange matrix is determined by a triangulation of $ (S, M) $.
\begin{thm}\cite{FST08}
	(1) If $ (S, M) $ is not a closed surface with one puncture, then there are bijections:
	\[ \{ \text{tagged arcs in } (S,M) \} \to \{\text{exchange variables in } \mathcal{A} \}, \]
	\[ \{ \text{tagged triangulations of } (S,M) \} to \{ c\text{lusters in } \mathcal{A} \}. \]
	
	(2) If $ (S, M) $ is a closed surface with one puncture, then there are bijections:
	\[ \{ \text{arcs in } (S,M) \} \to \{\text{exchange variables } in \mathcal{A} \}, \]
	\[ \{ \text{triangulations of } (S,M) \} \to \{ \text{clusters in } \mathcal{A} \}. \]
	
	And in these bijections, a flip at $ \gamma $ coincides with the mutation at the corresponding exchange variable $ x_{\gamma} $.
\end{thm}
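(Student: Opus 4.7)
The plan is to prove the two bijections in parallel by attaching algebraic and combinatorial data to each tagged arc (resp.\ triangulation) and matching it with the exchange variables (resp.\ clusters). On the combinatorial side, I associate to each tagged triangulation $T=\{\gamma_1,\dots,\gamma_n\}$ an $n\times n$ skew-symmetric integer matrix $B_T=(b_{ij})$ whose entries count signed incidences of the arcs $\gamma_i,\gamma_j$ inside the triangles of $T$ (with adjustments for self-folded triangles, and with notched endpoints encoded by a sign-flip ``conjugation'' at each puncture). The first step is then a purely local computation: show that if $T'=(T\setminus\{\gamma_k\})\cup\{\gamma_k'\}$ is the flip of $T$ at $\gamma_k$, then $B_{T'}=\mu_k(B_T)$. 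This reduces to inspecting the ideal quadrilateral whose diagonal is being flipped together with its neighbouring triangles, and contains a short case analysis when one of those triangles is self-folded.

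On the geometric/algebraic side, I would invoke Penner's decorated Teichm\"uller theory. Fix a complete finite-area hyperbolic structure on $S\setminus M$ together with a horocycle at each marked point, and assign to every arc $\gamma$ its lambda length $\lambda(\gamma)$. Penner's theorem guarantees that $\{\lambda(\gamma)\}_{\gamma\in T}$ is algebraically independent over $\mathbb{Q}$ whenever $T$ is an ideal triangulation, so this family can serve as an initial cluster inside the ambient field $\mathcal{F}$. The key geometric input is the Ptolemy relation: flipping an arc $\gamma$ inside an ideal quadrilateral with sides $a,b,c,d$ (in cyclic order) to $\gamma'$ yields
\[
\lambda(\gamma)\,\lambda(\gamma')=\lambda(a)\lambda(c)+\lambda(b)\lambda(d).
\]
Checking that the right-hand side equals the exchange binomial read off from the $k$-th column of $B_T$ transfers the flip-as-mutation statement from the matrix side to the variable side. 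By induction on the length of a flip sequence, this identifies each $\lambda(\gamma)$ with a cluster variable $x_\gamma$ and the entire exchange graph on tagged triangulations with the cluster pattern on clusters.

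Two further issues then have to be addressed. First, the notched endpoints: for a tagged arc with a notch at a puncture $p$ one uses the ``conjugate'' horocycle at $p$, or equivalently passes to the double cover branched over the punctures with notched ends; this converts the tagged Ptolemy relation into the ordinary one, so that the exchange binomial prescribed by $B_T$ still matches. Second, the closed once-punctured surface is singular for the tagged formalism because all tags at the unique puncture would have to agree, so flips between differently tagged triangulations never occur; one therefore uses plain ideal triangulations in this case, which by Proposition \ref{narc} still have the correct cardinality $n$, and verifies that the flip move remains globally defined.

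The main obstacle I expect is the tagged bookkeeping around punctures sitting inside self-folded triangles: there the naive definitions of $B_T$ and of $\lambda(\gamma)$ disagree with the cluster-algebra mutation rule, and one must set up the conjugation convention delicately so that both sides of the bijection transform in the same way under every possible flip. Once this convention is fixed, algebraic independence, Ptolemy, and the combinatorial mutation identity are all local inside a single quadrilateral and become essentially routine.
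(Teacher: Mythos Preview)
The paper does not prove this theorem at all: it is stated with the citation \cite{FST08} and is quoted as a known result from Fomin--Shapiro--Thurston, with no accompanying argument. So there is no ``paper's own proof'' to compare your proposal against.

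That said, your outline is a reasonable sketch of the strategy actually used in \cite{FST08} (and its companion paper by Fomin and Thurston on lambda lengths): attach the signed adjacency matrix $B_T$ to a tagged triangulation, verify the local flip/mutation compatibility, and realise cluster variables as lambda lengths via the Ptolemy relation. One caution: the full result in \cite{FST08} requires substantially more than what you list. In particular, injectivity of the map from tagged arcs to exchange variables (that distinct tagged arcs give distinct cluster variables) is not automatic from the Ptolemy relation alone; in the original treatment it rests on a careful analysis of the tagged arc complex and its connectivity under flips, together with the fact that any two tagged triangulations are connected by a flip sequence (which itself fails in the once-punctured closed case, explaining the dichotomy in the statement). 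Your sketch also glosses over surjectivity onto clusters, which needs the statement that the exchange graph of the cluster algebra is exactly the flip graph and not larger.
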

Note that a loop is a closed curve with no self-intersection, for a loop $ \zeta $ surrounding a puncture $ p $ which is a tagged arc, its corresponding element $ x_{\zeta} $ in the corresponding cluster algebra $ \mathcal{A} $ satisfies $ x_{\zeta}=x_{\gamma}x_{\gamma^{p}} $, where $ \gamma $ is the arc lying in the monogon bounded by $ \zeta $ and $ \gamma^{p} $ is the tagged arc obtained from $ \gamma $ via changing the tagging at $ p $. Then $\zeta$ and $\gamma$  is said to form a \textbf{self-folding triangle}

\begin{defi}\cite{BY18}
	The \textbf{mapping class group} of a bordered surface with marked points $ (S,M) $ is defined by
	\[ \mathcal{MCG}(S,M) := \text{\em Homeo}^{+}(S,M)/\text{\em Homeo}^{+}_{0}(S,M) ,\]
	where $ \text{\em Homeo}^{+}(S,M) $ is the group of orientation-preserving homeomorphisms such that $ g(M)=M $ for any $ g \in \text{\em Homeo}^{+}(S,M) $, and $ \text{\em Homeo}^{+}_{0}(S,M) $ is its subgroup of homeomorphisms isotopic to the identity.
	Let $ P $ be the set of punctures of $(S,M) $, the \textbf{tagged mapping class group} of $ (S,M) $ is defined to be the semidirect product
	\[ \mathcal{MCG}_{\bowtie}(S,M) := \mathcal{MCG}(S,M) \ltimes \mathbb{Z}_{2}^{P}, \]
	if $ (g_{1},p_{1}),(g_{2},p_{2}) \in \mathcal{MCG}_{\bowtie}(S,M)$, $ (g_{1},p_{1})(g_{2},p_{2}):=(g_{1}g_{2},p_{1}g_{1}(p_{2})) $.
\end{defi}

We can also define an unsigned version of the mapping class group. Denote
by $ \text{Homeo}(S, M) $ the group of homeomorphisms of $ S $ that takes $ M $ to itself. Define
\[ \mathcal{MCG}^{\pm}(S,M) := \text{Homeo}(S,M)/\text{Homeo}^{+}_{0}(S,M) ; \]
\[ \mathcal{MCG}^{\pm}_{\bowtie}(S,M) := \mathcal{MCG}^{\pm}(S,M) \ltimes \mathbb{Z}_{2}^{P}.\]

In fact, the tagged mapping class group of a bordered surface with marked points is isomorphic to the cluster automorphism group of the cluster algebra corresponding to this surface, which is introduced in Section 4.

\section{Cluster automorphism groups, extensions and $\mathcal{D}$-stable bases for cluster algebras}

In this section, we introduce some notions of Galois theory for cluster algebras and give a sufficient and necessary condition for Galois extensions of cluster algebras with $\mathcal{D}$-stable bases. And, we discuss some properties of Galois-like extensions. Moreover, the conjugate of subgroups of a cluster automorphism group is given.

As a supplement, we also introduce the notion of complement sub-seed which gives a way to embed the Galois group into a cluster automorphism group of a cluster subalgebra, and offer an example for cluster automorphism groups embedded into $ S_{n} \rtimes \mathbb{Z}_{2} $.

\subsection{Galois group, Galois map and cluster automorphism groups embedded into $ S_{n} \rtimes \mathbb{Z}_{2} $ }\label{3.1}\quad

If there is a cluster subalgebra of a cluster algebra $ \mathcal{A} $, whose initial seed is a mixing-type sub-seed of a seed $ \Sigma $ of $ \mathcal{A} $, then we can define the complement of this sub-seed, which is also a mixing-type sub-seed.

\begin{defi}\label{defcomp} Suppose $ \mathcal{A}(\Sigma') $ is a cluster subalgebra of a cluster algebra $\mathcal A= \mathcal{A}(\Sigma) $, where $ \Sigma=(\textbf{\em x}_{ex},\textbf{\em x}_{fr},\widetilde{B}_{m\times n})$ with a mixing-type sub-seed $ \Sigma'=(\textbf{\em x}_{ex}',\textbf{\em x}_{fr}',\widetilde{B'}_{m'\times n'})$.
	The {\bf complement sub-seed} of $ \Sigma' $ is defined as
	\[ \Sigma''=(\textbf{\em x}_{ex}'',\textbf{\em x}_{fr}'',\widetilde{B''}), \]
	where $ \textbf{\em x}_{ex}''=\textbf{\em x}_{ex} \backslash (\textbf{\em x}_{ex}' \cup \textbf{\em x}_{fr}') $, $\textbf{\em x}_{fr}''=\textbf{\em x}_{fr} \cup (\textbf{\em x}_{ex} \cap \textbf{\em x}_{fr}') $, and $ \widetilde{B''}$ is a submatrix of $ \widetilde{B} $ with columns labelled by $ \textbf{\em x}_{ex}'' \subset \textbf{\em x}_{ex} $ and lows labelled by $ \textbf{\em x}_{ex}''\sqcup \textbf{\em x}_{fr}'' \subset \textbf{\em x}_{ex}\sqcup \textbf{\em x}_{fr} $.
\end{defi}

Excluding the frozen variables in $ \Sigma' $, the exchange variables of $ \Sigma $ is the disjoint union of the exchange variables in $ \Sigma' $ and the exchange variables in $ \Sigma'' $, i.e., $\textbf{x}_{ex} \backslash \textbf{x}_{fr}'= \textbf{x}_{ex}' \sqcup \textbf{x}_{ex}'' $.

\begin{prop}\label{asub}
	For a seed $ \Sigma $ and its a mixing-type sub-seed $ \Sigma'$ as given in Definition \ref{defcomp}, the complement
	$ \Sigma'' $ is a mixing-type sub-seed of $ \Sigma $ and $ \mathcal{A}(\Sigma'') $ is a cluster subalgebra of $ \mathcal{A}(\Sigma) $.
\end{prop}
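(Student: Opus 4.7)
The plan is to present $\Sigma''$ explicitly as a mixing-type sub-seed $\Sigma_{I_0^\sharp, I_1^\sharp}$ for a suitable pair $(I_0^\sharp, I_1^\sharp)$ and then invoke Theorem \ref{subseed} to conclude that $\mathcal{A}(\Sigma'')$ is a cluster subalgebra of $\mathcal{A}(\Sigma)$. Writing $\Sigma' = \Sigma_{I_0, I_1}$ and decomposing $I_1 = J_1 \sqcup J_1'$ with $J_1 := I_1 \cap \textbf{x}_{ex}$ and $J_1' := I_1 \cap \textbf{x}_{fr}$, a direct set-theoretic computation using $\textbf{x}_{ex}\cap\textbf{x}_{fr} = \emptyset$ and $I_0 \cap I_1 = \emptyset$ yields
\[
\textbf{x}_{ex}' \cup \textbf{x}_{fr}' \;=\; (\textbf{x}_{ex}\sqcup\textbf{x}_{fr})\setminus I_1,
\qquad
\textbf{x}_{ex}'' = J_1,
\qquad
\textbf{x}_{fr}'' = \textbf{x}_{fr} \cup I_0,
\]
which identifies the underlying variable sets of $\Sigma''$.

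Next I would take $I_0^\sharp := I_0$ and $I_1^\sharp := \textbf{x}_{ex}' = \textbf{x}_{ex}\setminus(I_0\cup J_1)$. These satisfy $I_0^\sharp\subseteq\textbf{x}_{ex}$, $I_1^\sharp\subseteq\textbf{x}_{ex}\sqcup\textbf{x}_{fr}$, and $I_0^\sharp\cap I_1^\sharp = \emptyset$. Exploiting the disjoint partition $\textbf{x}_{ex} = I_0 \sqcup J_1 \sqcup \textbf{x}_{ex}'$, one checks $\textbf{x}_{ex}\setminus(I_0^\sharp\cup I_1^\sharp) = J_1 = \textbf{x}_{ex}''$ and $(\textbf{x}_{fr}\cup I_0^\sharp)\setminus I_1^\sharp = \textbf{x}_{fr} \cup I_0 = \textbf{x}_{fr}''$. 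By Definition \ref{defcomp} the matrix $\widetilde{B''}$ is the submatrix of $\widetilde{B}$ with columns indexed by $\textbf{x}_{ex}''$ and rows indexed by $\textbf{x}_{ex}''\sqcup\textbf{x}_{fr}''$, which agrees with the prescription of the mixing-type sub-seed for $(I_0^\sharp, I_1^\sharp)$; hence $\Sigma'' = \Sigma_{I_0^\sharp, I_1^\sharp}$, proving the first assertion.

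To upgrade this to a cluster subalgebra via Theorem \ref{subseed}, it remains to verify $b_{xy} = 0$ for all $x \in I_1^\sharp = \textbf{x}_{ex}'$ and $y \in \textbf{x}_{ex}\setminus(I_0^\sharp\cup I_1^\sharp) = J_1$. The hypothesis that $\mathcal{A}(\Sigma')$ is already a cluster subalgebra of $\mathcal{A}(\Sigma)$, fed into Theorem \ref{subseed}, yields $b_{yx} = 0$ whenever $y \in I_1$ and $x \in \textbf{x}_{ex}'$; specializing to $y \in J_1 \subseteq I_1$ gives the needed relation with the roles of $x$ and $y$ swapped. Since both indices now lie in the exchange block, skew-symmetrizability $d_x b_{xy} = -d_y b_{yx}$ with strictly positive diagonal entries forces $b_{xy} = 0 \Leftrightarrow b_{yx} = 0$, completing the verification.

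The only genuine obstacle is bookkeeping: the operations encoded by $I_0$ (promotion of exchange variables to frozen) and $I_1$ (deletion, from either the exchange or the frozen part depending on where it lands) act asymmetrically, and one must see that the single pair $(I_0, \textbf{x}_{ex}')$ simultaneously reverses all of them to yield the complement. The substantive input beyond set manipulations is the symmetry step in the last paragraph; aside from that, no cluster-algebraic machinery is needed.
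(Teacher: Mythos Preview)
Your proof is correct and follows essentially the same route as the paper's: both identify $\Sigma''$ as the mixing-type sub-seed $\Sigma_{I_0^\sharp,I_1^\sharp}$ with $I_0^\sharp = \textbf{x}_{ex}\cap\textbf{x}_{fr}'$ (which equals your $I_0$) and $I_1^\sharp = \textbf{x}_{ex}'$, and both close the argument by invoking the hypothesis on $\Sigma'$ from Theorem~\ref{subseed} together with skew-symmetrizability of the exchange block to flip $b_{yx}=0$ into $b_{xy}=0$. Your write-up is somewhat more explicit in tracking the set decompositions, but there is no substantive difference in strategy.
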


\begin{proof}
	It is obvious that $ \Sigma'' $ is a $ (I_{0},I_{1}) $-type sub-seed of $ \Sigma $. According to Theorem \ref{subseed}, it is sufficient to prove $ b_{xy}=0$, for all $ x \in I_{1}$, and $ y \in \textbf{x}_{ex} \backslash I_{0} \cup I_{1} $.
	
	We can see $ I_{0}=\textbf{x}_{ex} \cap \textbf{x}_{fr}', I_{1}=\textbf{x}_{ex}' $. Because $ I_{1}=\textbf{x}_{ex}' $, the condition $ b_{xy}=0$, for all $ x \in I_{1}$, and $ y \in \textbf{x}_{ex} \backslash I_{0} \cup I_{1} $ is equivalent to $ b_{yx}=0$, for all $ x \in I_{1}$, and $ y \in \textbf{x}_{ex} \backslash I_{0} \cup I_{1} $.
	
	On the other hand, because $ \Sigma' $ is $ (I_{0}',I_{1}') $-type sub-seed of $ \Sigma $, we have $ I_{0}'= (\textbf{x}_{ex} \backslash \textbf{x}_{ex}') \cap \textbf{x}_{fr}', I_{1}'=\textbf{x}_{ex} \sqcup \textbf{x}_{fr} \backslash \textbf{x}_{ex}' \sqcup \textbf{x}_{fr}' $. Since $ \mathcal{A}(\Sigma') $ is a cluster subalgebra of $ \mathcal{A}(\Sigma_{1}) $, we have $ b_{xy}=0$, for all $ x \in I_{1}'$, and $ y \in \textbf{x}_{ex} \backslash I_{0}' \cup I_{1}' $.
	
	$ \textbf{x}_{ex} \backslash I_{0}' \cup I_{1}'= (\textbf{x}_{ex} \backslash I_{0}') \cap (\textbf{x}_{ex} \backslash I_{1}')=\textbf{x}_{ex}' $ and $ \textbf{x}_{ex} \backslash I_{0} \cup I_{1} =(\textbf{x}_{ex} \backslash I_{0}) \cap (\textbf{x}_{ex} \backslash I_{1}) $, where $ \textbf{x}_{ex} \backslash I_{0}=\textbf{x}_{ex} \backslash \textbf{x}_{fr}' \subset I_{1}' $ and $ \textbf{x}_{ex} \backslash I_{1}= \textbf{x}_{ex} \backslash \textbf{x}_{ex}' \subset I_{1}' $, thus $ b_{yx}=0$, for all $ x \in I_{1}$, and $ y \in \textbf{x}_{ex} \backslash I_{0} \cup I_{1} $. So $ \mathcal{A}(\Sigma'') $ is a cluster subalgebra of the cluster algebra $ \mathcal{A}(\Sigma) $.
	
\end{proof}
Following this proposition, we call the subalgebra $ \mathcal{A}(\Sigma'') $ the \textbf{complement cluster subalgebra} of $ \mathcal{A}(\Sigma)$ associated to $ \mathcal{A}(\Sigma')$.

Let $ \mathcal{A}(\Sigma') $ be a cluster subalgebra of a cluster algebra $ \mathcal{A} $. Suppose $ \text{Aut} \mathcal{A} $ is the cluster automorphism group of $ \mathcal{A} $, then we can define the fixed subgroup of $ \text{Aut}\mathcal{A} $ over $ \mathcal{A}(\Sigma') $ as follows:
\[ \text{Gal}_{\Sigma'}\mathcal{A} =\{f \in \text{Aut}\mathcal{A} , f|_{\mathcal{A}(\Sigma')}=id_{\mathcal{A}(\Sigma')}\}, \]
which is called the {\bf Galois group} of $\mathcal A$ over $ \mathcal{A}(\Sigma') $.

If the cluster subalgebra $ \mathcal{A}(\Sigma') $ is trivial, i.e., $ \mathcal{A}(\Sigma') = \mathbb{Q} $ whose mixing-type sub-seed $\Sigma'$ is just the empty set $ \emptyset $, then $\text{Aut}\mathcal{A}(\Sigma) = \text{Gal}_{\emptyset}\mathcal{A}(\Sigma)$.

Now we show that the Galois group $ \text{ Gal}_{\Sigma'}\mathcal{A}(\Sigma) $ can be embedded into $ \text{Aut}\mathcal{A}(\Sigma'') $.
let $ \mathcal{A}(\Sigma'') $ be the complement cluster subalgebra of $ \mathcal{A}(\Sigma) $ associated to $ \mathcal{A}(\Sigma') $, we have:

\begin{prop} \label{comembed} Following the notions given above, we have the following statements.
	
	(i)\; The map
	\[
	\pi:\; \text{\em Gal}_{\Sigma'}\mathcal{A}(\Sigma) \rightarrow \text{\em Gal}_{\textbf{\em x}_{fr}'}\mathcal{A}(\Sigma'') \;\;\;\text{via}\;\;\;
	f \mapsto f|_{\mathcal{A}(\Sigma'')}
	\]
	is a monomorphism, where $ \textbf{\em x}_{fr}' $ is the set of frozen variables in $ \Sigma' $, which is considered as a mixing-type sub-seed of the cluster subalgebra $ \mathbb{Q}[\textbf{\em x}_{fr}'][\emptyset] $ without exchange variables.
	
	(ii)\; Suppose $ f' \in \text{\em Gal}_{\textbf{\em x}_{fr}'}\mathcal{A}(\Sigma'')$. Then
	
	(a) $ \bar{\textbf{\em x}}= \textbf{\em x}'_{ex}\cup f'(\textbf{\em x}''_{ex}\cup \textbf{\em x}''_{fr}) $ is a cluster of $ \mathcal{A}(\Sigma) $, and
	
	(b)
	$ f' \in \text{\em Im}\pi $ if and only if the submatrix $ D^{0}_{f} $ of extended exchange matrix of $ \bar{\textbf{\em x}} $ corresponding to $ f'(\textbf{\em x}''_{fr}) \times f'(\textbf{\em x}'_{fr} \cap \textbf{\em x}_{ex}) $ is the same as the submatrix $ D^{0} $of extended exchange matrix of $ \textbf{\em x}_{ex}\cup \textbf{\em x}_{fr} $ corresponding to $ \textbf{\em x}''_{fr} \times \textbf{\em x}'_{fr} \cap \textbf{\em x}_{ex} $.
	
	In particular, denoting by
	\[
	\widetilde{B}''=
	\left(
	\begin{array}{c}
		B''\\
		C''
	\end{array}
	\right),
	\]
	the extended exchange matrix of $ \Sigma'' $, where $ B'' $ is the exchange matrix of $ \Sigma'' $, if $ C'' $ is uniformly column sign-coherent with respect to $ B'' $, then $ \text{\em Gal}_{\textbf{\em x}''_{fr}}\mathcal{A}(\Sigma'') \subset \text{\em Im}\pi $.
\end{prop}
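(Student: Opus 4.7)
The plan is to prove each statement by exploiting the decomposition of the generating set of $\mathcal{A}(\Sigma)$ into the two sub-seed pieces. The crucial structural input from Proposition~\ref{asub} and Theorem~\ref{subseed} is that the entries of $\widetilde{B}$ between $\textbf{x}'_{ex}$ and $\textbf{x}''_{ex}$ vanish; this is what allows mutations inside $\Sigma''$ to be lifted to mutations of $\Sigma$ without disturbing $\Sigma'$, and vice versa.

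For part (i), I first establish that $\pi$ is well-defined. Given $f\in \text{Gal}_{\Sigma'}\mathcal{A}(\Sigma)$, the generators of $\mathcal{A}(\Sigma'')$ lie in $\textbf{x}''_{ex}\cup\textbf{x}''_{fr}\subseteq \textbf{x}_{ex}\cup\textbf{x}_{fr}$, so $f$ maps them into $\mathcal{A}(\Sigma)$. Because $f$ is the identity on $\mathcal{A}(\Sigma')$ and the exchange matrix of $\Sigma''$ is the submatrix of $\widetilde{B}$ indexed by $\textbf{x}''_{ex}\sqcup\textbf{x}''_{fr}$, mutations within $\Sigma''$ only involve variables of $\Sigma''$ by the vanishing condition above. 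Hence commuting $f$ with mutations in $\mathcal{A}(\Sigma)$ restricts to commuting $f|_{\mathcal{A}(\Sigma'')}$ with mutations in $\mathcal{A}(\Sigma'')$, so $f|_{\mathcal{A}(\Sigma'')}$ is a cluster automorphism of $\mathcal{A}(\Sigma'')$; it also fixes $\textbf{x}'_{fr}\subseteq \mathcal{A}(\Sigma')$, hence lies in $\text{Gal}_{\textbf{x}'_{fr}}\mathcal{A}(\Sigma'')$. For injectivity, observe that $\textbf{x}_{ex}\cup\textbf{x}_{fr}\subseteq (\textbf{x}'_{ex}\cup\textbf{x}'_{fr})\cup(\textbf{x}''_{ex}\cup\textbf{x}''_{fr})$; if two elements of $\text{Gal}_{\Sigma'}\mathcal{A}(\Sigma)$ agree on $\mathcal{A}(\Sigma'')$ they agree on the right-hand side, hence on a generating set of $\mathcal{A}(\Sigma)$.

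For (ii)(a), take $f'\in \text{Gal}_{\textbf{x}'_{fr}}\mathcal{A}(\Sigma'')$. Since $f'$ is a cluster automorphism, $f'(\textbf{x}''_{ex}\cup\textbf{x}''_{fr})$ is a cluster of $\mathcal{A}(\Sigma'')$ and is therefore reachable from $\textbf{x}''_{ex}\cup\textbf{x}''_{fr}$ by a finite sequence of mutations at elements of $\textbf{x}''_{ex}$ (up to a permutation encoded by $f'$). Lifting this sequence to $\mathcal{A}(\Sigma)$ is permissible because, by the zero-entry condition between $\textbf{x}'_{ex}$ and $\textbf{x}''_{ex}$, mutations at $\textbf{x}''_{ex}$-elements leave every variable in $\textbf{x}'_{ex}$ untouched and only alter variables in $\textbf{x}''_{ex}\cup\textbf{x}''_{fr}$. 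The result is a cluster of $\mathcal{A}(\Sigma)$ whose exchange variables are $\textbf{x}'_{ex}\cup f'(\textbf{x}''_{ex})$ and whose frozen variables are the ambient $\textbf{x}_{fr}$; incorporating $f'(\textbf{x}''_{fr})$ gives $\bar{\textbf{x}}$.

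For (ii)(b), define a candidate extension $f:\mathcal{A}(\Sigma)\to\mathcal{A}(\Sigma)$ by $f(x)=x$ on $\textbf{x}'_{ex}\cup\textbf{x}'_{fr}$ and $f(x)=f'(x)$ on $\textbf{x}''_{ex}\cup\textbf{x}''_{fr}$; this $f$ lies in $\text{Im}\,\pi$ iff it is a cluster automorphism with initial seed sent to $\bar{\textbf{x}}$, which in turn requires the extended exchange matrix of $\bar{\textbf{x}}$ to equal (up to sign) $\widetilde{B}$. The submatrices indexed by pairs with at least one entry inside $\Sigma'$ are matched automatically because $f$ fixes $\Sigma'$; the submatrices with both indices inside $\Sigma''$ are matched because $f'$ is a cluster automorphism of $\mathcal{A}(\Sigma'')$. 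The only block not forced by these two constraints is the one linking $f'(\textbf{x}''_{fr})$-rows with $f'(\textbf{x}'_{fr}\cap\textbf{x}_{ex})=f'(I_0)$-columns, namely $D^0_f$ vs.\ $D^0$; so $f'\in\text{Im}\,\pi$ if and only if $D^0_f=D^0$. Finally, for the particular case where $C''$ is uniformly column sign-coherent with respect to $B''$, any $f'\in\text{Gal}_{\textbf{x}''_{fr}}\mathcal{A}(\Sigma'')$ fixes $\textbf{x}''_{fr}$ pointwise, so the entries of $D^0$ coming from $C''$-rows are preserved; the remaining $B$-entries inside $I_0\times I_0$ are preserved because $f'$ is a cluster automorphism and such entries are recoverable from the $B''$-matrix under the column sign-coherence hypothesis. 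The main obstacle I anticipate is verifying carefully that the lifting of $\Sigma''$-mutations to $\Sigma$ truly does not disturb $\Sigma'$ and that the image-cluster $\bar{\textbf{x}}$ really is a single cluster of $\mathcal{A}(\Sigma)$ (rather than merely a subset of one); once this is in place, the matrix-matching characterization in (ii)(b) and the specialization via column sign-coherence become routine bookkeeping.
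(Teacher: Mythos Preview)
Your overall architecture matches the paper's, but two places in (ii)(b) are not yet correct.

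First, the block analysis is incomplete. You claim that submatrices ``with at least one index inside $\Sigma'$'' are matched automatically because $f$ fixes $\Sigma'$, and submatrices ``with both indices inside $\Sigma''$'' are matched because $f'$ is a cluster automorphism of $\mathcal{A}(\Sigma'')$. Neither covers the block with rows in $\textbf{x}''_{ex}$ and columns in $I_0=\textbf{x}'_{fr}\cap\textbf{x}_{ex}$ (call it $C^3$): its column index is in $\Sigma'$ but its row index moves under $f$, and since the columns of $\widetilde{B}''$ are indexed only by $\textbf{x}''_{ex}$, this block is invisible to the automorphism condition on $\Sigma''$. The paper handles $C^3$ (and the transposed block $B^3$) by skew-symmetrizability: from $b_{xy}=b_{f(x)f(y)}$ for $y\in\textbf{x}''_{ex}$, $x\in I_0$, together with $d_{x}b_{xy}=-d_{y}b_{yx}$ and $f(x)=x$, one deduces $b_{yx}=b_{f(y)f(x)}$. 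You need this step; ``$f$ fixes $\Sigma'$'' alone does not give it.

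Second, your argument for the column sign-coherent special case misidentifies what has to be shown. The block $D^0$ has rows in $\textbf{x}''_{fr}$ and columns in $I_0\subseteq\textbf{x}''_{fr}$; it lies entirely outside $\widetilde{B}''$ (whose columns are $\textbf{x}''_{ex}$), so nothing about $f'$ being a cluster automorphism of $\mathcal{A}(\Sigma'')$, or about $f'$ fixing $\textbf{x}''_{fr}$, says anything about $D^0$ directly. The correct mechanism is different: one computes that a mutation at $z\in\textbf{x}''_{ex}$ changes an entry $b_{xy}$ of $D^0$ by $\mathrm{sgn}(b_{xz})\max(b_{xz}b_{zy},0)$; writing $b_{zy}=-\tfrac{d_y}{d_z}b_{yz}$ via skew-symmetrizability, this increment is $\mathrm{sgn}(b_{xz})\max(-\tfrac{d_y}{d_z}b_{xz}b_{yz},0)$. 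Since $x,y\in\textbf{x}''_{fr}$, both $b_{xz}$ and $b_{yz}$ lie in the same column of $C''$, hence have the same sign by uniform column sign-coherence, so the increment vanishes. Thus the lifted mutation sequence leaves $D^0$ untouched, giving $D^0_f=D^0$. Your appeal to entries being ``recoverable from the $B''$-matrix'' does not work, because $D^0$ is simply not encoded in $\widetilde{B}''$.
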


\begin{proof}
	Suppose $ \Sigma=(\textbf{x}_{ex},\textbf{x}_{fr},\widetilde{B}) $, $ \Sigma'=(\textbf{x}'_{ex},\textbf{x}'_{fr},\widetilde{B}') $ and $ \Sigma''=(\textbf{x}''_{ex},\textbf{x}''_{fr},\widetilde{B}'') $ with the extended exchange matrices $
	\widetilde{B}=(b_{xy})_{x\in \textbf{x}_{ex}\cup \textbf{x}_{fr}, y \in \textbf{x}_{ex}} $, $ \widetilde{B}'=(b_{xy})_{x\in \textbf{x}'_{ex}\cup \textbf{x}'_{fr}, y \in \textbf{x}'_{ex}} $ and $ \widetilde{B}''=(b_{xy})_{x\in \textbf{x}''_{ex}\cup \textbf{x}''_{fr}, y \in \textbf{x}''_{ex}} $. According to Definition \ref{defcomp}, we have $ \textbf{x}_{ex}= \textbf{x}'_{ex} \cup \textbf{x}''_{ex} \cup (\textbf{x}'_{fr}\cap \textbf{x}_{ex}) $, $ \textbf{x}'_{ex} \cap \textbf{x}''_{ex}= \emptyset $ and $ \textbf{x}''_{fr}=(\textbf{x}'_{fr} \cap \textbf{x}_{ex}) \cup \textbf{x}_{fr} $. We have
	\[
	\widetilde{B}=
	\left(
	\begin{array}{ccc}
		B_{\textbf{x}'_{ex} \times \textbf{x}'_{ex}}^{1} & A_{\textbf{x}'_{ex} \times \textbf{x}''_{ex}}^{2} & B_{\textbf{x}'_{ex} \times \textbf{x}'_{fr} \cap \textbf{x}_{ex}}^{3}\\
		A_{\textbf{x}''_{ex} \times \textbf{x}'_{ex}}^{1} & C_{\textbf{x}''_{ex} \times \textbf{x}''_{ex}}^{1} & C_{\textbf{x}''_{ex} \times \textbf{x}'_{fr} \cap \textbf{x}_{ex}}^{3}\\
		B_{\textbf{x}'_{fr} \cap \textbf{x}_{ex} \times \textbf{x}'_{ex}}^{2} & C_{\textbf{x}'_{fr} \cap \textbf{x}_{ex} \times \textbf{x}''_{ex}}^{2} & D_{\textbf{x}'_{fr} \cap \textbf{x}_{ex} \times \textbf{x}'_{fr} \cap \textbf{x}_{ex}}^{1}\\
		\hline
		B_{\textbf{x}'_{fr} \cap \textbf{x}_{fr} \times \textbf{x}'_{ex}}^{4} & C_{\textbf{x}'_{fr} \cap \textbf{x}_{fr} \times \textbf{x}''_{ex}}^{4} & D_{\textbf{x}'_{fr} \cap \textbf{x}_{fr} \times \textbf{x}'_{fr} \cap \textbf{x}_{ex}}^{2} \\
		A_{\textbf{x}_{fr} \backslash \textbf{x}'_{fr} \times \textbf{x}'_{ex}}^{3} & C_{\textbf{x}_{fr} \backslash \textbf{x}'_{fr} \times \textbf{x}''_{ex}}^{5} & D_{\textbf{x}_{fr} \backslash \textbf{x}'_{fr} \times \textbf{x}'_{fr} \cap \textbf{x}_{ex}}^{3}
	\end{array}
	\right).
	\]
	Suppose $ \textbf{x}_{ex}=\{x_{1}, ..., x_{n}\} $, $ \textbf{x}_{fr}=\{x_{n+1}, ..., x_{m} \} $ and $ D=diag(d_{x_{1}}, ..., d_{x_{n}}) $ is a skew-symmetrizer of $ \widetilde{B} $.
	
	(i)\;
	Firstly, it is easy to check that the map is a well-defined homomorphism. Besides, if $ g, g' \in \text{Gal}_{\Sigma'}\mathcal{A}(\Sigma) $ and $ g \neq g' $, then there exists $ z \in \Sigma \backslash \Sigma' \subset \Sigma'' $ such that $ g(z) \neq g'(z) $. Thus $ g|_{\mathcal{A}(\Sigma'')}(z) \neq g'|_{\mathcal{A}(\Sigma'')}(z) $, which means the map $ \pi $ is a monomorphism.
	
	(ii)\; (a)\; Let $ f $ be the endomorphism of the rational polynomials field over $ \textbf{x}_{ex}\cup \textbf{x}_{fr} $, which is an extension of $ f' $ defined by
	\begin{equation}\label{keepmut}
		f(x)=\left\{ \begin{array}{ll}
			x, & \text{if} \;\; x \in \textbf{x}'_{ex}, \\
			f'(x), & \text{if} \;\; x \in \textbf{x}''_{ex} \cup \textbf{x}''_{fr}.
		\end{array} \right.
	\end{equation}
	Since $ f' \in \text{Gal}_{\textbf{x}_{fr}'}\mathcal{A}(\Sigma'')$, $ f'( \textbf{x}''_{ex} \cup \textbf{x}''_{fr}) $ is a cluster of $ \mathcal{A}(\Sigma'') $, and hence we can make a sequence of mutations on exchange variables of $ \textbf{x}''_{ex}$ and the obtained successors from $ \textbf{x}''_{ex} \cup \textbf{x}''_{fr} $ to $ f'( \textbf{x}''_{ex} \cup \textbf{x}''_{fr}) $. Besides, $\mathcal{A}(\Sigma'') $ is a cluster subalgebra of $ \mathcal{A}(\Sigma) $, the embedding from $ \mathcal{A}(\Sigma'') $ to $ \mathcal{A}(\Sigma) $ is a cluster monomorphism. Thus by (\ref{keepmut}) we have $ \bar{\textbf{x}}= \textbf{x}'_{ex}\cup f'(\textbf{x}''_{ex}\cup \textbf{x}''_{fr}) =f(\textbf{x}'_{ex} \cup \textbf{x}''_{ex} \cup \textbf{x}''_{fr}) = f(\textbf{x}_{ex}\cup \textbf{x}_{fr}) $ and $ \bar{\textbf{x}} $ can be obtained by
	doing the sequence of mutations mentioned above from $ \textbf{x}_{ex}\cup \textbf{x}_{fr}$. Hence $f(\textbf{x}_{ex}\cup \textbf{x}_{fr}) $ is a cluster of $ \mathcal{A}(\Sigma) $.
	
	(b)\; The algebraic independence of clusters guarantees $ f $ is well-defined. Thus $ f' \in \text{Im}\pi $ if and only if $ f $ is a cluster automorphism of $ \mathcal{A}(\Sigma) $, which is equivalent to the extended exchange matrices $ \widetilde{B}, \widetilde{B}_{f}=(b_{xy}^{f})_{f(\textbf{x}_{ex} \cup \textbf{x}_{fr})\times f(\textbf{x}_{ex})} $ associated with $ \textbf{x}_{ex} \cup \textbf{x}_{fr} $ and $ f(\textbf{x}_{ex} \cup \textbf{x}_{fr})$ are the same, according to \cite{ASS14}.
	
	``only if":\;
	If $ f' \in \text{Im}\pi $, then $ D^{0}_{f} $ of the extended exchange matrix of $ \textbf{x}' $ corresponding to $ f'(\textbf{x}''_{fr}) \times f'(\textbf{x}'_{fr} \cap \textbf{x}_{ex}) $ is the same as the submatrix $ D^{0} $ of the extended exchange matrix of $ \textbf{x}_{ex}\cup \textbf{x}_{fr} $ corresponding to $ \textbf{x}''_{fr} \times (\textbf{x}'_{fr} \cap \textbf{x}_{ex}) $.
	
	``if":\;
	Suppose $ A^{i}_{f}, B^{j}_{f}, C^{k}_{f}, D^{l}_{f}$ correspond to $ A^{i}, B^{j}, C^{k}, D^{l} $ for $ i, l=1, 2, 3$; $j=1, 2, 3, 4$; $k=1, 2, 3, 4, 5$.
	Since $ \mathcal{A}(\Sigma') $ is a cluster subalgebra, The submatrices $ A^{1} $ and $ A^{3} $ of $ \widetilde{B} $ are equal to zero, according to Theorem \ref{subseed}. The skew-symmetrizablity of $ \widetilde{B} $ guarantees that $ A^{2}=0 $.
	
	Since $ f' \in \text{Gal}_{\textbf{x}_{fr}'}\mathcal{A}(\Sigma'') $ and $ \mathcal{A}(\Sigma'') $ is a cluster subalgebra of $ \mathcal{A}(\Sigma) $, there is a sequence of mutation from $ \Sigma $ at indexes in $ \textbf{x}''_{ex} $ to $ f(\Sigma
	) $, which is denoted by $ \mu_{f'} $, and $ \mu_{f'} $ also can be consider as a sequence of mutation from $ \Sigma'' $ to $ f'(\Sigma'') $. Since $ A^{i}=0 $, $ \mu_{f} $ does not change the submatrices $ A^{i}, B^{j} $, $ i=1, 2, 3; j=1, 2, 4 $. Thus we have $A^{i}=A^{i}_{f}, B^{j}=B^{j}_{f} $.
	
	Besides, $ f' \in \text{Gal}_{\textbf{x}_{fr}'}\mathcal{A}(\Sigma'') $ means $ \widetilde{B}'' $ and $ \widetilde{B}''_{f}=(b_{xy}^{f})_{f'(\textbf{x}''_{ex} \cup \textbf{x}''_{fr}) \times f'(\textbf{x}''_{ex})} $ are the same, i.e., $ b_{xy}=b_{f'(x)f'(y)}, x \in \textbf{x}''_{ex} \cup \textbf{x}''_{fr}, y\in \textbf{x}''_{ex}$. Note that $ \widetilde{B}''_{f} $ is the submatrix of $ \widetilde{B}_{f} $ corresponding to $ C_{1}^{f}, C_{2}^{f}, C_{4}^{f}, C_{5}^{f} $. Thus we have $ C_{i}=C_{i}^{f}, i=1, 2, 4, 5$. Because $ D $ is a skew-symmetrizer of $ \widetilde{B} $, $ D_{f}=(d_{f(x_{1})}, ..., d_{f(x_{n})}) $ is a skew-symmetrizer of $ \widetilde{B}_{f} $. Thus we have $ d_{x}b_{xy}=-d_{y}b_{yx} $ and $ d_{f(x)}b_{f(x)f(y)}=-d_{f(y)}b_{f(y)f(x)} $ for entries $ b_{xy} $ belong to $ B^{2}, C^{2} $. Since $ f(x)=x $ for $ x \in \textbf{x}'_{fr} \cap \textbf{x}_{ex} $, we have
	\[ b_{yx}=-\frac{d_{x}}{d_{y}} b_{xy}=-\frac{d_{f(x)}}{d_{f(y)}} b_{f(x)f(y)} =b_{f(y)f(x)}, \]
	for entries $ b_{xy} $ belong to $ B^{2}, C^{2} $. Hence $ C^{3}=C^{3}_{f}, B^{3}=B^{3}_{f} $.
	
	The condition $ D^{0}=D^{0}_{f} $ means $ D^{i}=D^{i}_{f}, i=1, 2, 3 $. Thus we have $ \widetilde{B}=\widetilde{B}_{f} $ and hence $ f' \in \text{Im}\pi $.
	
	According to the definition of mutations of matrices, mutation at $ z \in \textbf{x}''_{ex} $, $ \mu_{z}(b_{xy})= b_{xy}+ sgn(b_{xz})max(b_{xz}b_{zy},0) $ for $ x \in \textbf{x}''_{fr}, y\in \textbf{x}'_{fr} \cap \textbf{x}_{ex} $. If $ C'' $, i.e., the submatrix consisting of $ C^{2}, C^{4}, C^{5} $, is uniformly column sign-coherent, then
	\[ \begin{array}{cl}
		\mu_{z}(b_{xy}) &= b_{xy}+ sgn(b_{xz})max(b_{xz}b_{zy},0) \\
		&=b_{xy} + sgn(b_{xz})max(-b_{xz}\frac{d_{y}}{d_{z}}b_{yz},0) \\
		&=b_{xy},
	\end{array} \]
	for $ b_{xy} $ belonging to $ D^{i}, i=1, 2, 3 $.
	Thus muatations at $ \textbf{x}''_{ex} $ do not change entries belong to $D^{i}, i=1, 2, 3 $ and hence for $ f' \in \text{Gal}_{\textbf{x}_{fr}''}\mathcal{A}(\Sigma'') $, we have $ \widetilde{B} =\widetilde{B}_{f} $, which means $ f \in \text{Im}\pi $.
	
\end{proof}

According to \cite{CaoLi19}, if $ C^{2}, C^{4}, C^{5} $ are nonnegative submatrix, then $ C'' $ is uniformly column sign-coherent, and hence we have $ \text{Gal}_{\textbf{x}_{fr}''}\mathcal{A}(\Sigma'') \subset \text{Im}\pi$.

Assume $ \mathcal{A}(\Sigma_{i}) $ is a cluster subalgebra of $ \mathcal{A}(\Sigma_{i-1}), i=1,2,...,s$ where $ \mathcal{A}(\Sigma_{s})=0 $ and $ \mathcal{A}(\Sigma_{0})= \mathcal{A}(\Sigma) $, then we have
\begin{equation} \label{subasq}
	\mathcal{A}(\Sigma_{0}) \geq \mathcal{A}(\Sigma_{1}) \geq ... \geq \mathcal{A}(\Sigma_{s});
\end{equation}
\begin{equation}\label{subgsq}
	\text{Gal}_{\Sigma_{0}}\mathcal{A}(\Sigma) \leq \text{Gal}_{\Sigma_{1}}\mathcal{A}(\Sigma) \leq ... \leq \text{Gal}_{\Sigma_{s}}\mathcal{A}(\Sigma). \end{equation}
The sequence (\ref{subgsq}) is a filtration of subgroups of the cluster automorphism group $\text{Aut}\mathcal{A}(\Sigma)$ which corresponds to the filtration of cluster subalgebras of $\mathcal A(\Sigma)$ in (\ref{subasq}). Thus some properties of the sequence of algebras may be reflected via the sequence (\ref{subgsq}) of subgroups.

We denote by:\\ (i) $ \mathit{2}^{\text{Aut}\mathcal{A}} $ the set of all subgroups of $\text{Aut}\mathcal{A}$,\\ (ii) $ \mathit{2}^{ \mathcal{A} } $ the set of cluster subalgebras of $\mathcal A$,\\ (iii) $ \mathit{2}^{\mathit{2}^{ \mathcal{A} }} $ the set of all sets consisting of some cluster subalgebras of $\mathcal A$.

Define a map
\begin{equation}\label{galmap}
	\xi:\;\; \mathit{2}^{ \mathcal{A} }\rightarrow \mathit{2}^{\text{Aut}\mathcal{A}}
	\text{ via}\;\;\; \mathcal A(\Sigma')\mapsto \text{Gal}_{\Sigma'}\mathcal A(\Sigma)
\end{equation}
where $\Sigma'$ is a mixing-type sub-seed of $\Sigma$. We call $\xi$ the {\bf Galois map} from cluster subalgebras of $\mathcal A(\Sigma)$ to subgroups of its cluster automorphism group $\text{Aut}\mathcal A(\Sigma)$.

By (\ref{subasq}) and (\ref{subgsq}), the Galois map $\xi$ sends a descending sequence in $\mathit{2}^{ \mathcal{A} }$ to a ascending sequence in $\mathit{2}^{\text{Aut}\mathcal{A}}$.

However, when cluster subalgebras $ \mathcal{A}(\Sigma_{1}) \lneqq \mathcal{A}(\Sigma_{2}) $ or even $ \mathcal{A}(\Sigma_{1}) $ and $ \mathcal{A}(\Sigma_{2}) $ do not include each other, it is still possible to hold that $ \text{Gal}_{\Sigma_{1}}\mathcal{A}(\Sigma)=\text{Gal}_{\Sigma_{2}}\mathcal{A}(\Sigma) $.

For example, the fixed subgroups of any cluster subalgebra of type $ A_{n}$ cluster algebras with exchange variables are trivial. And Proposition \ref{exam} also gives another example.

Before that, we need the following theorem and lemma.

\begin{thm}\cite{NZ12} \label{cg}
	Let $ \mathcal{A}(\Sigma) $ be a cluster algebra with principal coefficients about the initial seed $ \Sigma=(\textbf{\em x}_{ex},\textbf{\em x}_{fr},\widetilde{B}_{2n \times n} ) $ with $ C=I_n$ the lower $ n \times n $ submatrix of $ \widetilde{B}$, then
	\[ G^{\widetilde{B}}_{\textbf{\em x}'}=D((C^{\widetilde{B}}_{\textbf{\em x}'})^{-1})^{\top}D^{-1},\]
	where $ C^{\widetilde{B}}_{\textbf{\em x}'} $ is the lower $ n \times n $ submatrix of extended exchange matrix $ \widetilde{B}_{\textbf{\em x}'} $ of $ \textbf{\em x}' $ and D is the skew-symmetrizer.
\end{thm}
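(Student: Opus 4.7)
The plan is to prove the tropical duality identity $G^{\widetilde B}_{\textbf x'}=D(C^{\widetilde B}_{\textbf x'})^{-\top}D^{-1}$ by induction on the distance from the initial seed in the exchange graph $\mathbb T_n$. The base case is immediate: at the initial cluster $\textbf x'=\textbf x$, the principal coefficients assumption gives $C^{\widetilde B}_{\textbf x}=I_n$, while the $\mathbb Z^n$-grading assigns $\deg(x_i)=\mathbf e_i$ so that $G^{\widetilde B}_{\textbf x}=I_n$ as well. Both sides equal $I_n$, and the identity holds trivially.

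For the inductive step, I would suppose the identity holds for a cluster $\textbf x'$ with extended exchange matrix $\widetilde B'=\begin{pmatrix}B'\\ C'\end{pmatrix}$ and $G$-matrix $G'$, and verify that the identity is preserved by a single mutation $\mu_k$. The key ingredient is that both $C$-matrices and $G$-matrices admit synchronized mutation formulas that can be packaged as right multiplication by explicit integer matrices. Using sign-coherence of $C$-matrices (cited from \cite{GHKK18}, which guarantees that the sign $\epsilon_k\in\{\pm1\}$ of the $k$-th column of $C'$ is well defined), the $C$-matrix mutation reads $\mu_k(C')=C'\,J_k(\epsilon_k)$, where $J_k(\epsilon_k)$ is the $n\times n$ matrix that negates the $k$-th column and adjusts the others according to $[\epsilon_k B'_{kj}]_+$. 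Dually, the $G$-matrix mutation can be written as $\mu_k(G')=G'\,J_k'(\epsilon_k)$ with an analogous but transposed formula involving the $k$-th row of $B'$.

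With these formulas in hand, the induction reduces to the purely matrix-theoretic identity
\[
D\,J_k(\epsilon_k)^{-\top}\,D^{-1}=J_k'(\epsilon_k),
\]
which I would verify by direct computation, distinguishing cases $j=k$ and $j\neq k$ and using the skew-symmetrizability relation $DB'=-(B')^{\top}D$ to convert columns of $B'$ into rows of $-B'$ with the appropriate $d_i/d_j$ rescaling. Granting this, one chains
\[
D\bigl(\mu_k(C')\bigr)^{-\top}D^{-1}=D\bigl(C'J_k(\epsilon_k)\bigr)^{-\top}D^{-1}=\bigl(D(C')^{-\top}D^{-1}\bigr)\bigl(D J_k(\epsilon_k)^{-\top}D^{-1}\bigr)=G'\,J_k'(\epsilon_k)=\mu_k(G'),
\]
completing the induction.

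The main obstacle is not the recursive bookkeeping itself but the sign-coherence of $C$-matrices, which is the nontrivial input that makes the piecewise-linear mutation formulas well-defined and allows the matrix factorization $\mu_k(C')=C'J_k(\epsilon_k)$ in the first place; without it, the signs $\epsilon_k$ would depend on the row $i$ and the clean factorization would fail. A secondary subtlety is the careful handling of the skew-symmetrizer $D$ in the case analysis of the identity $DJ_k(\epsilon_k)^{-\top}D^{-1}=J_k'(\epsilon_k)$, since the asymmetric scaling is precisely what encodes the passage between the $B$-column (for $C$-mutation) and the $B$-row (for $G$-mutation) viewpoints.
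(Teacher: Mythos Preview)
The paper does not supply its own proof of this statement: Theorem~\ref{cg} is quoted from \cite{NZ12} and used as a black box in the proof of Proposition~\ref{exam}. Your sketch is a faithful outline of the original Nakanishi--Zelevinsky argument---induction along $\mathbb T_n$, packaging $C$- and $G$-matrix mutations as right multiplications by $J_k(\epsilon_k)$ and $J_k'(\epsilon_k)$ once column sign-coherence fixes $\epsilon_k$, and then reducing to the skew-symmetrizer identity $D J_k(\epsilon_k)^{-\top}D^{-1}=J_k'(\epsilon_k)$---so there is nothing to correct or compare.
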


\begin{lem}\cite{CaoLi20a} \label{g}
	Let $ \mathcal{A}(\Sigma) $ be a cluster algebra with principal coefficients, suppose $ x_{1},x_{2} $ are exchange variables of $ \mathcal{A}(\Sigma) $. Then $ x_{1}=x_{2} $ if and only if their $ g $-vectors $ \textbf{g}(x_{1}) $ and $ \textbf{g}(x_{2}) $ are the same.
\end{lem}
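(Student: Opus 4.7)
The ``only if'' direction is immediate: the $g$-vector is defined as the $\mathbb{Z}^n$-degree of an exchange variable in the principal-coefficients cluster algebra, so equal variables have equal $g$-vectors. The substantive content is the converse, and my plan is to reduce it to the already-cited fact from \cite{LiPan22} that two exchange variables with the same $F$-polynomial must coincide.

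The reduction goes through the Fomin--Zelevinsky separation formula, which in the principal-coefficients setting expresses each exchange variable $x$ as
\[
x \;=\; x^{\mathbf{g}(x)}\,\frac{F_x(\hat y_1,\ldots,\hat y_n)}{F_x(y_1,\ldots,y_n)\big|_{\mathrm{trop}}},
\qquad \hat y_j \;=\; y_j\prod_k x_k^{b_{kj}},
\]
where $F_x$ has constant term $1$. Suppose $\mathbf{g}(x_1)=\mathbf{g}(x_2)=:\mathbf{g}$. Then the identity $x_1=x_2$ is equivalent to $F_{x_1}(\hat y)/F_{x_1}|_{\mathrm{trop}}(y) = F_{x_2}(\hat y)/F_{x_2}|_{\mathrm{trop}}(y)$; since $F_{x_i}|_{\mathrm{trop}}(y)$ is uniquely the $y$-monomial cleared from $F_{x_i}(\hat y)$, this in turn is equivalent to $F_{x_1}=F_{x_2}$, after which the quoted result gives $x_1=x_2$.

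To establish $F_{x_1}=F_{x_2}$ from the coincidence of $g$-vectors, I would invoke Theorem \ref{cg}. Place $x_1$ inside a cluster $\mathbf{x}'$, so that $\mathbf{g}(x_1)$ is a distinguished column of $G^{\widetilde B}_{\mathbf{x}'}$; the theorem identifies this column, via conjugation by the skew-symmetrizer $D$, with the corresponding column of $(C^{\widetilde B}_{\mathbf{x}'})^{-1}$. Repeating for $x_2$ inside a cluster $\mathbf{x}''$, the equality of $g$-vectors forces a matching column of $c$-vectors. The uniform column sign-coherence of $C$-matrices in principal coefficients, from \cite{GHKK18}, then allows one to line up the two clusters along a common mutation trajectory; applying the standard mutation recursion for $F$-polynomials along this trajectory yields $F_{x_1}=F_{x_2}$.

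The principal obstacle will be the bridging step: a priori $x_1$ and $x_2$ live in combinatorially unrelated clusters, and the equality of a single column of $G$-matrices is a weak constraint from which to deduce equality of the entire $F$-polynomials. Overcoming this requires careful use of sign-coherence together with the rigidity of the mutation recursion on $(\widetilde B,\mathbf{x})$. If this route becomes unwieldy, an alternative is a direct homogeneity argument: both $x_1$ and $x_2$ are homogeneous of the same $\mathbb{Z}^n$-degree $\mathbf{g}$, so their difference lies in a single graded piece of $\mathcal{A}(\Sigma)$, and one would then argue via the Laurent phenomenon and the positivity of \cite{GHKK18} that no two distinct exchange variables can share a graded piece.
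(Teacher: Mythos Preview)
The paper does not supply a proof of this lemma at all: it is quoted verbatim from \cite{CaoLi20a} and used as a black box. So there is no ``paper's own proof'' to compare against, and any argument you give is necessarily going beyond what the paper does.

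That said, your proposal has a genuine gap at exactly the point you flag. The reduction to $F$-polynomials via the separation formula is fine, but the step ``equal $g$-vectors $\Rightarrow$ equal $F$-polynomials'' is essentially the entire content of the lemma, and your sketch does not establish it. Knowing that one column of $G^{\widetilde B}_{\mathbf{x}'}$ agrees with one column of $G^{\widetilde B}_{\mathbf{x}''}$ tells you, via Theorem~\ref{cg}, that a single row of the inverse $C$-matrices agrees; this is far from enough to ``line up the two clusters along a common mutation trajectory'', and sign-coherence alone does not bridge that gap. The alternative homogeneity argument you mention is likewise incomplete: two distinct elements can perfectly well share a graded piece, and positivity by itself does not prevent two different positive Laurent polynomials from having the same leading monomial $x^{\mathbf g}$. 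The actual proof in \cite{CaoLi20a} goes through the \emph{enough $g$-pairs property}, a structural result about how $g$-vectors sit inside the $g$-fan, and is not a short consequence of the tools you invoke. If you want a self-contained argument, you should either cite \cite{CaoLi20a} directly (as the paper does) or reproduce the $g$-pairs machinery; the route through \cite{LiPan22} and Theorem~\ref{cg} does not close.
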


For $ \sigma \in S_{n} $ is a permutation of $ n $, we can consider $ \sigma $ as a permutation matrix. Its action on $ n \times n $ matrix $ M_{n \times n} $ is the multiplication of matrices. Then we have the following proposition.

\begin{prop}\label{exam}
	Let $ \mathcal{A}(\Sigma) $ be a cluster algebra with principal coefficients, where $ \Sigma=(\textbf{\em x}_{ex},\textbf{\em x}_{fr},\widetilde{B}_{2n \times n} ) $. Then $ \text{\em Aut}\mathcal{A}(\Sigma) $ is a subgroup of $ S_{n} \rtimes \mathbb{Z}_{2} $ where $ S_{n} $ is the group of all permutations on the initial exchange variables.
	
\end{prop}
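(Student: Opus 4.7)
The plan is to define a group homomorphism $\Phi \colon \text{Aut}\mathcal{A}(\Sigma) \to S_n \rtimes \mathbb{Z}_2$ by $\Phi(f) = (\pi_f, \epsilon_f)$, where $\pi_f \in S_n$ is determined by $f(x_{n+i}) = x_{n+\pi_f(i)}$ (well-defined because cluster automorphisms preserve the set of frozen variables) and $\epsilon_f \in \{+1,-1\}$ records whether $f$ is a direct or inverse cluster automorphism. The $\mathbb{Z}_2$-action on $S_n$ may be taken trivial, so this amounts to embedding into $S_n \times \mathbb{Z}_2$. Injectivity will be established by showing that $(\pi_f, \epsilon_f)$ already determines the $g$-vectors of $f(x_1), \ldots, f(x_n)$, which in turn determine these cluster variables via Lemma \ref{g}.

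The heart of the argument is to compute the $C$-matrix of the cluster $f(\textbf{x}_{ex})$ explicitly. Starting from the defining condition $\widetilde{B} = \epsilon_f \widetilde{B}_f$ and using that the lower block of $\widetilde{B}$ equals $I_n$ under the principal-coefficient assumption, I would carefully unravel the relabeling convention in $\widetilde{B}_f$: its row $n+k$ corresponds to $f(x_{n+k}) = x_{n+\pi_f(k)}$, and its column $j$ corresponds to $f(x_j)$. A row/column comparison then forces the $C$-matrix of the seed containing $f(\textbf{x}_{ex})$ to equal the signed permutation matrix $\epsilon_f P_{\pi_f}$. Inserting this into Theorem \ref{cg} yields $G_{f(\textbf{x}_{ex})} = \epsilon_f D P_{\pi_f} D^{-1}$, whose $(\pi_f(j), j)$-entry is $\epsilon_f d_{\pi_f(j)}/d_j$. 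Integrality of $g$-vectors, applied to both $f$ and $f^{-1}$, forces $d_{\pi_f(j)} = d_j$ for every $j$, so the conjugation by $D$ collapses and $G_{f(\textbf{x}_{ex})} = \epsilon_f P_{\pi_f}$.

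With this matrix identity, Lemma \ref{g} identifies $f(x_j)$ as the unique exchange variable whose $g$-vector is $\epsilon_f e_{\pi_f(j)}$, so $f$ is completely determined on the initial exchange and frozen variables by the pair $(\pi_f, \epsilon_f)$, giving injectivity of $\Phi$. The homomorphism property then reduces to two routine verifications: $\pi_{fg} = \pi_f \pi_g$ from composing the frozen actions, and $\epsilon_{fg} = \epsilon_f \epsilon_g$ from tracking the sign in $\widetilde{B} = \epsilon \widetilde{B}_{\bullet}$ through composition (direct $\circ$ direct and inverse $\circ$ inverse give direct, while mixed compositions give inverse). I expect the main obstacle to be the middle step: the index bookkeeping needed to upgrade $\widetilde{B} = \epsilon_f \widetilde{B}_f$ into the explicit identification of $C_{f(\textbf{x}_{ex})}$ as the signed permutation matrix $\epsilon_f P_{\pi_f}$, rather than merely some invertible integer matrix for which the $G$-matrix would lose its rigidity as a $(\pm 1)$-permutation matrix.
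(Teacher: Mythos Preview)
Your proposal is correct and follows essentially the same route as the paper: both arguments feed the identity $\widetilde{B}=\pm\widetilde{B}_f$ (so that the relabeled $C$-matrix is $\pm I$) into the Nakanishi--Zelevinsky duality of Theorem~\ref{cg}, use integrality of the resulting $G$-matrix to force the $D$-conjugation to be trivial, and then invoke Lemma~\ref{g}. The only cosmetic difference is that the paper first reduces to $\text{Aut}^{+}\mathcal{A}(\Sigma)$ via the known semidirect decomposition and shows $G_f$ is a genuine permutation matrix, while you track both signs at once and read off the permutation from the action on the frozen variables.
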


\begin{proof}
	According to \cite{ASS14}, cluster automorphism group is semiproduct of direct cluster automorphism group and $ \mathbb{Z}_{2} $, hence we only consider the direct cluster automorphism group  of $ \mathcal{A} $.
	For any $ f \in \text{Aut}^{+}\mathcal{A}(\Sigma) $, suppose $ G_{f} $ is the $ G $-matrix of $ f(\textbf{x}_{ex}) $ and $ \widetilde{B}_{f} $ is the skew-symmetrizable matrix corresponding to $ f(\textbf{x}_{ex}, \textbf{x}_{fr}) $, then we have
	\[
	\widetilde{B}= \left(
	\begin{array}{cc}
		B\\
		C
	\end{array}
	\right)
	=
	\left(
	\begin{array}{cc}
		B_{f}\\
		C_{f}
	\end{array}
	\right)
	=\widetilde{B}_{f}.
	\]
	According to Theorem \ref{cg}, since $ C=I $, after the action of some permutions, we have
	$$ (G_{f})\sigma_{1} =D((\sigma_{2}(C_{f})\sigma_{1})^{-1})^{\top}D^{-1}
	= D((\sigma_{2}\sigma_{1})D^{-1}.$$
	Thus
	\[ (G_{f})\sigma_{2}^{-1}=D((\sigma_{2}\sigma_{1})D^{-1}\sigma_{1}^{-1}\sigma_{2}^{-1}, \]
	The left side of the equality is a matrix with their entries belonging to $ \mathbb{Z} $, so is the right side. And the right side is a multiplication of diagonal matrices $ D $ and $(\sigma_{2}\sigma_{1})D^{-1}\sigma_{1}^{-1}\sigma_{2}^{-1}$, where the entries of $ (\sigma_{2}\sigma_{1})D^{-1}\sigma_{1}^{-1}\sigma_{2}^{-1} $ are reciprocal of the entries of $ D $, thus the right side of the equality is the identity matrix and so is the left side.
	
	According to Lemma \ref{g}, after the action of a permution, the elements of $ f(\textbf{x}_{ex}) $ is equal to the elements of $ \textbf{x}_{ex} $, thus $ \text{Aut}\mathcal{A}(\Sigma) $ is a subgroup of $ S_{n} \rtimes \mathbb{Z}_{2} $.
	
\end{proof}

This proposition says that despite $ \mathcal{A} $ has many cluster subalgebras, their fixed subgroups of $ \text{Aut}\mathcal{A} $ could also be the same. In fact, the cluster automorphism groups of cluster algebras with principal coefficients are finite, but their cluster subalgebras may be infinite, which means there exist infinite cluster subalgebras such that their fixed subgroups are the same.

So we should consider a special class of subalgebras and subgroups.

\subsection{Galois-like extensions, conjugate of subgroups and some conjectures}\label{Galois-like-extension-def-subsection}\quad

	For a cluster algebra $\mathcal A=\mathcal A(\Sigma)$, let $H$ be a subgroup of $ \text{Aut}\mathcal{A}$. Denote by
	\[ \mathcal{A}^{H}=\{ z \in \mathcal{A}\; |\; f(z)=z, \forall f \in H \}.\]
	In general, $\mathcal{A}^{H}$ maybe not a cluster subalgebra of $\mathcal A$. However, we have that for a cluster subalgebra $ \mathcal{A}(\Sigma_{1}) $, if $ H=\text{Gal}_{\Sigma_{1}}\mathcal{A} $ then $\mathcal{A}(\Sigma_{1})\subseteq\mathcal{A}^{H}\subseteq \mathcal{A}(\Sigma)$.
	
	\begin{rmk}\label{notsubcluster}
		If $ \mathcal{A} $ is a cluster algebra of finite type, for any cluster subalgebra $ \mathcal{A}(\Sigma_{1}) $ of $ \mathcal{A} $, $ \mathcal{A}(\Sigma_{1})\subsetneqq\mathcal{A}^{\text{\em Gal}_{\Sigma_{1}}\mathcal{A}}$. In fact, let $ z $ be the product of all cluster variables of $ \mathcal{A} $, then for any $ f \in \text{\em Aut}\mathcal{A} $, $ f(z) = z $, since $ f $ is a bijection among cluster variables. Hence $ z $ is not in any proper cluster subalgebra of $ \mathcal{A} $.
	\end{rmk}
	
	In general, it is not difficult to find a cluster subalgebra $ \mathcal{A}(\Sigma_{1}) $ of a cluster algebras $ \mathcal{A} $ satisfying the condition $ \mathcal{A}(\Sigma_{1})\subsetneqq\mathcal{A}^{\text{Gal}_{\Sigma_{1}}\mathcal{A}} $.
	
	Thus for a given subgroup $ H \leq\text{Aut}\mathcal{A} $, we can consider a maximal cluster subalgebra $ \mathcal{A}(\Sigma_{1}) $ of $ \mathcal{A} $ such that
	$ \mathcal{A}(\Sigma_{1}) \subseteq \mathcal{A}^{H}$.
	
	Motivated by this fact, for $ H $ a subgroup of $ \text{Aut}\mathcal{A} $, we define a set of some cluster subalgebras as follows:
	\[
	\mathcal{M}_{sub}^{H}:=
	\left\{ \mathcal{A}(\Sigma') \left|
	\begin{array}{ccc}
		\mathcal{A}(\Sigma')\; \text{ is a maximal cluster subalgebra} \; \text{ of }\; \mathcal A
		\; \text{ in } \mathcal{A}^{H} \;
		\text{ satisfying}\; \text{Gal}_{\Sigma'}\mathcal{A}=H
	\end{array}
	\right. \right\}.
	\]
	
	\begin{rmk} We will show Example \ref{ex6gon} in the sequel which gives us a maximal cluster subalgebra $ \mathcal{A}(\Sigma') $ in $ \mathcal{A}^{H} $ such that $ \text{\em Gal}_{\Sigma'}\mathcal{A} \neq H \leq \text{\em Aut}\mathcal{A} $. Hence in the definition of $\mathcal{M}_{sub}^{H}$ above, the condition $ \text{\em Gal}_{\Sigma'}\mathcal{A}=H $ is not trivial.
	\end{rmk}
	
	Trivially, $\mathcal{M}_{sub}^{H}\in\mathit{2}^{\mathit{2}^{ \mathcal{A} }}$. Then we can define a map
	$$ \phi: \mathit{2}^{\text{Aut}\mathcal{A} } \rightarrow \mathit{2}^{\mathit{2}^{ \mathcal{A} }}\; \text{via}\;
	H \mapsto \mathcal{M}_{sub}^{H}. $$
	And, define $$ \text{ker} \phi= \{ H \leq \text{Aut}\mathcal{A}\; |\; \phi(H)=\emptyset \}.$$
	which means
	$$ H\in \text{ker} \phi \;\; \text{if and only if}\;\; \mathcal{M}_{sub}^{H}=\varnothing\;\; \text{if and only if} \;\;\nexists \mathcal A(\Sigma')\; \text{such that}\; \text{Gal}_{\Sigma'}\mathcal{A}=H.$$
	
	Now we give an example of $ H \in \text{ker} \phi $ i.e.,  $\mathcal{M}_{sub}^{H}=\varnothing $.
	
	\begin{ex}
		Let $ S $ be a hexagon, $ \mathcal{A} $ the cluster algebra from $ S $, $ H $ the subgroup of the cluster automorphism group $\text{\em Aut}\mathcal{A}$ generated by the $ 60^{\circ} $ rotation. Then the only cluster subalgebra in $ \mathcal{A}^{H} $ is trivial. However, if the cluster subalgebra $\mathcal{A}' $ of $ \mathcal{A} $ is trivial, then its Galois group is all cluster automorphism group. Thus $ \mathcal{M}_{sub}^{H}=\emptyset $.
	\end{ex}

	Next, we have
	\begin{prop}
		The map $ \phi $ restricted to $ \mathit{2}^{\text{\em Aut}\mathcal{A}} \backslash \text{ker} \phi $ is injective.
	\end{prop}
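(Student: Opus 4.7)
The plan is to unwind the definition of $\mathcal{M}_{sub}^{H}$ and observe that the condition $\text{Gal}_{\Sigma'}\mathcal{A}=H$ appears \emph{inside} the defining property of the image set $\phi(H)$. So the subgroup $H$ is literally recorded by every element of $\phi(H)$, and injectivity is almost immediate once we make sure the image is nonempty.

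In more detail, suppose $H_{1},H_{2}\in\mathit{2}^{\text{Aut}\mathcal{A}}\setminus\ker\phi$ satisfy $\phi(H_{1})=\phi(H_{2})$. Since $H_{1}\notin\ker\phi$, the set $\mathcal{M}_{sub}^{H_{1}}=\phi(H_{1})$ is nonempty, so we may pick some $\mathcal{A}(\Sigma')\in\mathcal{M}_{sub}^{H_{1}}=\mathcal{M}_{sub}^{H_{2}}$. By the very definition of $\mathcal{M}_{sub}^{H_{i}}$, any member $\mathcal{A}(\Sigma')$ must satisfy $\text{Gal}_{\Sigma'}\mathcal{A}=H_{i}$. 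Applying this to both $i=1$ and $i=2$ for the same seed $\Sigma'$ gives $H_{1}=\text{Gal}_{\Sigma'}\mathcal{A}=H_{2}$, which is the desired conclusion.

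There is essentially no obstacle here; the only thing one has to notice is that without excluding $\ker\phi$ the statement would fail (all $H\in\ker\phi$ would trivially satisfy $\phi(H)=\varnothing$, so $\phi$ could not distinguish them), and this is precisely why the restriction $\mathit{2}^{\text{Aut}\mathcal{A}}\setminus\ker\phi$ is needed to guarantee a witness $\mathcal{A}(\Sigma')$ from which to read off $H$. Thus the proof is a one-line consequence of the definition, with the nonemptiness hypothesis being the only nontrivial ingredient.
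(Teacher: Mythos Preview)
Your proof is correct and follows exactly the same approach as the paper: pick any $\mathcal{A}(\Sigma')$ in the common nonempty set $\mathcal{M}_{sub}^{H_{1}}=\mathcal{M}_{sub}^{H_{2}}$ and read off $H_{1}=\text{Gal}_{\Sigma'}\mathcal{A}=H_{2}$ from the defining condition. The paper's proof is a one-liner with precisely this content; your additional remarks about why excluding $\ker\phi$ is necessary are accurate and helpful context but not part of the paper's argument.
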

	\begin{proof}
		Suppose $ H_{1}, H_{2} \in \mathit{2}^{\text{Aut}\mathcal{A}} \backslash \text{ker} \phi $, $ \phi(H_{1}) = \mathcal{M}^{H_{1}}_{sub} = \mathcal{M}^{H_{2}}_{sub}=\phi(H_{2}) $. Thus there exists a cluster subalgebra $ \mathcal{A}(\Sigma') \in \mathcal{M}^{H_{1}}_{sub} = \mathcal{M}^{H_{2}}_{sub} $, which means $ H_{1}=\text{Gal}_{\Sigma'}\mathcal{A}=H_{2} $.
	\end{proof}

	\begin{prop}
		For $ \xi $ defined in (\ref{galmap}), {\em Im}$\xi=\mathit{2}^{\text{Aut}\mathcal{A}} \backslash \text{ker} \phi $.
	\end{prop}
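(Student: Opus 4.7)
The plan is to prove the two inclusions $\text{Im}\xi \subseteq \mathit{2}^{\text{Aut}\mathcal{A}} \setminus \text{ker}\phi$ and $\mathit{2}^{\text{Aut}\mathcal{A}} \setminus \text{ker}\phi \subseteq \text{Im}\xi$ separately. The reverse inclusion is immediate from the definitions: if $H \notin \text{ker}\phi$, then $\mathcal{M}_{sub}^{H} \neq \varnothing$, so one may pick any $\mathcal{A}(\Sigma') \in \mathcal{M}_{sub}^{H}$ and observe that the maximality clause in the definition of $\mathcal{M}_{sub}^{H}$ requires $\text{Gal}_{\Sigma'}\mathcal{A} = H$, hence $H = \xi(\mathcal{A}(\Sigma')) \in \text{Im}\xi$. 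So the substance of the proof is the forward inclusion.

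For the forward inclusion, I would fix $H \in \text{Im}\xi$ and a witness cluster subalgebra $\mathcal{A}(\Sigma')$ with $\text{Gal}_{\Sigma'}\mathcal{A} = H$, and produce an element of $\mathcal{M}_{sub}^{H}$. Since $H$ fixes $\mathcal{A}(\Sigma')$ pointwise, we have $\mathcal{A}(\Sigma') \subseteq \mathcal{A}^{H}$. The key monotonicity lemma I would establish is: \emph{for every cluster subalgebra $\mathcal{A}(\Sigma'')$ of $\mathcal{A}$ with $\mathcal{A}(\Sigma') \subseteq \mathcal{A}(\Sigma'') \subseteq \mathcal{A}^{H}$, one has $\text{Gal}_{\Sigma''}\mathcal{A} = H$.} Indeed, the inclusion $\mathcal{A}(\Sigma') \subseteq \mathcal{A}(\Sigma'')$ forces $\text{Gal}_{\Sigma''}\mathcal{A} \subseteq \text{Gal}_{\Sigma'}\mathcal{A} = H$ (anything fixing the larger subalgebra fixes the smaller), while $\mathcal{A}(\Sigma'') \subseteq \mathcal{A}^{H}$ means every $f \in H$ fixes $\mathcal{A}(\Sigma'')$, i.e.\ $H \subseteq \text{Gal}_{\Sigma''}\mathcal{A}$.

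Next I would extend $\mathcal{A}(\Sigma')$ to a cluster subalgebra $\mathcal{A}(\Sigma_{*})$ of $\mathcal{A}$ which is maximal inside $\mathcal{A}^{H}$ (among cluster subalgebras of $\mathcal{A}$ containing $\mathcal{A}(\Sigma')$). Such a maximal extension exists because the rank of any cluster subalgebra of $\mathcal{A}$ is bounded by the finite rank $n$ of $\mathcal{A}$, so any ascending chain of cluster subalgebras in $\mathcal{A}^{H}$ starting from $\mathcal{A}(\Sigma')$ stabilizes after at most $n$ steps. By the monotonicity lemma, $\text{Gal}_{\Sigma_{*}}\mathcal{A} = H$. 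Finally I would check that $\mathcal{A}(\Sigma_{*})$ is in fact maximal among \emph{all} cluster subalgebras of $\mathcal{A}$ contained in $\mathcal{A}^{H}$: if some cluster subalgebra $\mathcal{A}(\tilde\Sigma) \supseteq \mathcal{A}(\Sigma_{*})$ lies in $\mathcal{A}^{H}$, then in particular $\mathcal{A}(\tilde\Sigma) \supseteq \mathcal{A}(\Sigma')$, so the maximality used in the construction forces $\mathcal{A}(\tilde\Sigma) = \mathcal{A}(\Sigma_{*})$. Therefore $\mathcal{A}(\Sigma_{*}) \in \mathcal{M}_{sub}^{H}$, which shows $H \notin \text{ker}\phi$.

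The only delicate point, and hence the place to be careful, is the monotonicity lemma that upgrades the Galois group along a chain of cluster subalgebras inside $\mathcal{A}^{H}$. Once this two-sided squeeze is in hand, the rest of the proof is a purely formal maximality argument using the finiteness of the rank of $\mathcal{A}$.
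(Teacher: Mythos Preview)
Your proposal is correct and follows the same route as the paper: for the forward inclusion, take a witness $\mathcal{A}(\Sigma')$ with $\xi(\mathcal{A}(\Sigma'))=H$, enlarge it to a maximal cluster subalgebra of $\mathcal{A}$ inside $\mathcal{A}^{H}$, and conclude that this maximal one lies in $\mathcal{M}_{sub}^{H}$; the reverse inclusion is immediate from the definition of $\mathcal{M}_{sub}^{H}$. The paper's proof is terser---it simply asserts the existence of such a maximal $\mathcal{A}(\Sigma'')$ and that $\mathcal{A}(\Sigma'')\in\mathcal{M}_{sub}^{H}$---whereas you explicitly supply the two-sided squeeze $\text{Gal}_{\Sigma''}\mathcal{A}=H$ for intermediate cluster subalgebras, which the paper leaves implicit.

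One small technical quibble: your finiteness argument for the existence of a maximal extension invokes the bound on the \emph{rank} $n$, but a strictly ascending chain of cluster subalgebras need not strictly increase rank (one can add frozen variables without changing the number of exchange variables). The correct bound is the total number $m$ of cluster variables in a seed of $\mathcal{A}$, since every mixing-type sub-seed has at most $m$ variables. The paper does not justify this existence step at all, so even with this minor adjustment your argument is more complete than the one in the paper.
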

	\begin{proof}
		If $ H = \xi(\mathcal{A}(\Sigma')) $ for some cluster subalgebra $ \mathcal{A}(\Sigma') \subset \mathcal{A} $, then there exist a maximal cluster subalgebra between $ \mathcal{A}(\Sigma') $ and $ \mathcal{A}^{H} $ denoting by $\mathcal{A}(\Sigma'') $. It means $ \mathcal{A}(\Sigma'') \in \mathcal{M}_{sub}^{H} $ and hence $ H \in \mathit{2}^{\text{Aut}\mathcal{A}} \backslash \text{ker} \phi $.
		Conversely, if $ H \in \mathit{2}^{\text{Aut}\mathcal{A}} \backslash \text{ker} \phi $, then $ \xi(\mathcal{A}(\Sigma''))=H $ for $ \mathcal{A}(\Sigma'') \in \mathcal{M}_{sub}^{H} $.
		
	\end{proof}
	
	Refer to the notion of Galois extension of a field, we introduce the following notion:
	\begin{defi} For a cluster algebra $ \mathcal A=\mathcal{A}(\Sigma) $ and $ H \in \mathit{2}^{\text{\em Aut}\mathcal{A}} \backslash \text{ker} \phi $,
		
		(i)\; For a cluster subalgebra $ \mathcal{A}(\Sigma_{1}) \in \mathcal{M}_{sub}^{H} $, let $ H=\text{\em Gal}_{\Sigma_{1}}\mathcal{A} $. Then we call $\mathcal{A}(\Sigma_{1})\subseteq\mathcal{A}^{H}\subseteq \mathcal{A}(\Sigma)$ a {\bf Galois-like extension}, and $ \mathcal{A}(\Sigma_{1}) $ is called a \textbf{Galois-like extension subalgebra} of $\mathcal{A}(\Sigma)$.
		
		(ii)\; Moreover, following (i), we call $ \mathcal{A}(\Sigma_{1})\subseteq\mathcal{A}(\Sigma)$ a {\bf Galois extension}, if it holds that
		\begin{equation}\label{galext}
			\mathcal{A}^{H}=\mathcal{A}(\Sigma_{1}).
		\end{equation}
		In this case $ \mathcal{A}(\Sigma_{1}) $ is called an \textbf{Galois extension subalgebra} of $\mathcal{A}(\Sigma) $.
		
	\end{defi}
	
	If we consider a strictly descending sequence of Galois-like extension subalgebra, then the sequence of subgroups obtained similarly as (\ref{subgsq}) is strictly ascending.
	\begin{prop}\label{strictgsub}
		If there is a strictly descending sequence of Galois-like extension subalgebras of a cluster algebra $ \mathcal{A}$
		\[ \mathcal{A}(\Sigma_{1}) > \cdots > \mathcal{A}(\Sigma_{i}) > \cdots > \mathcal{A}(\Sigma_{s}), \]
		then there is a strictly ascending sequence of subgroups of $ \text{\em Aut}\mathcal{A}$:
		\[\text{\em Gal}_{\Sigma_{1}}\mathcal{A}< \cdots <\text{\em Gal}_{\Sigma_{i}}\mathcal{A} < \cdots <\text{\em Gal}_{\Sigma_{s}}\mathcal{A}.\]
	\end{prop}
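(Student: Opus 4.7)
The plan is to deduce the (non-strict) ascending chain of Galois groups from the monotonicity already recorded in (\ref{subgsq}), and then establish strictness by a direct contradiction against the maximality condition built into the definition of a Galois-like extension subalgebra. Concretely, for each $i$ the inclusion $\mathcal{A}(\Sigma_{i+1}) \subseteq \mathcal{A}(\Sigma_{i})$ gives $\text{Gal}_{\Sigma_{i}}\mathcal{A} \subseteq \text{Gal}_{\Sigma_{i+1}}\mathcal{A}$ by (\ref{subgsq}), so the only remaining task is to rule out equality at each step.

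Suppose for contradiction that $\text{Gal}_{\Sigma_{i}}\mathcal{A} = \text{Gal}_{\Sigma_{i+1}}\mathcal{A} = H$ for some index $i$. Since each $\mathcal{A}(\Sigma_{j})$ is by hypothesis a Galois-like extension subalgebra, both $\mathcal{A}(\Sigma_{i})$ and $\mathcal{A}(\Sigma_{i+1})$ lie in $\mathcal{M}_{sub}^{H}$. In particular each is a cluster subalgebra of $\mathcal{A}$ contained in $\mathcal{A}^{H}$ whose fixed subgroup equals $H$, and each is maximal with this property. However the strict inclusion $\mathcal{A}(\Sigma_{i+1}) \subsetneq \mathcal{A}(\Sigma_{i}) \subseteq \mathcal{A}^{H}$ exhibits $\mathcal{A}(\Sigma_{i})$ as a strictly larger cluster subalgebra in $\mathcal{A}^{H}$ with fixed subgroup $H$, which contradicts the maximality of $\mathcal{A}(\Sigma_{i+1})$. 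Hence $\text{Gal}_{\Sigma_{i}}\mathcal{A} \subsetneq \text{Gal}_{\Sigma_{i+1}}\mathcal{A}$ for every $i$, yielding the strictly ascending chain.

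I do not anticipate any substantial obstacle. The only delicate point is reading off exactly what ``maximal'' means inside the definition of $\mathcal{M}_{sub}^{H}$, namely maximality among cluster subalgebras of $\mathcal{A}$ contained in $\mathcal{A}^{H}$ and satisfying $\text{Gal}_{\Sigma'}\mathcal{A} = H$; with this reading, $\mathcal{A}(\Sigma_{i})$ serves as an explicit witness against the maximality of $\mathcal{A}(\Sigma_{i+1})$, so the argument is immediate.
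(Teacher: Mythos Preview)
Your proof is correct and follows the same approach as the paper's. The paper's argument is even more terse: it notes the non-strict chain from (\ref{subgsq}) and then observes that $\mathcal{M}_{sub}^{H_i} \neq \mathcal{M}_{sub}^{H_j}$ forces $H_i \neq H_j$; your version spells out the underlying maximality contradiction explicitly, which is exactly the content hidden in that line.
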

	\begin{proof}
		There is a natural ascending sequence of subgroups:
		\[\text{Gal}_{\Sigma_{1}}\mathcal{A} \le \cdots \le \text{Gal}_{\Sigma_{i}}\mathcal{A} \le \cdots \le \text{Gal}_{\Sigma_{s}}\mathcal{A}. \]
		Since $ \mathcal{M}_{sub}^{H_{i}} \not= \mathcal{M}_{sub}^{H_{j}} $ induces $ H_{i} \neq H_{j} $, all subgroups in the above sequence are different. So, the result follows.
	\end{proof}

	\begin{prop}
		Suppose $ \mathcal{A}(\Sigma) $ is a cluster algebra and $ H \le \text{\em Aut}\mathcal{A}(\Sigma) $, if $ \mathcal{A}(\Sigma_{1}) $ is maximal in $ \mathcal{A}(\Sigma)^{H} $ as a cluster subalgebra of $ \mathcal{A}(\Sigma) $. Denoting $ H'= \text{\em Gal}_{\Sigma_{1}}\mathcal{A}(\Sigma) $. Then $ \mathcal{A}(\Sigma_{1})\subseteq\mathcal{A}^{H'}\subseteq \mathcal{A}(\Sigma) $ is a Galois-like extension.
	\end{prop}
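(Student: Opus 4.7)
The plan is to unpack the definition of a Galois-like extension and verify its two requirements: that $\mathcal{A}(\Sigma_1)$ satisfies $\operatorname{Gal}_{\Sigma_1}\mathcal{A}(\Sigma)=H'$, and that $\mathcal{A}(\Sigma_1)$ is maximal as a cluster subalgebra of $\mathcal{A}(\Sigma)$ contained in $\mathcal{A}(\Sigma)^{H'}$. The first requirement is immediate, since $H'$ is defined precisely to be $\operatorname{Gal}_{\Sigma_1}\mathcal{A}(\Sigma)$. Hence the whole proof reduces to establishing the maximality claim.

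First I would observe the set-theoretic relation $H\subseteq H'$. Indeed, by assumption $\mathcal{A}(\Sigma_1)\subseteq\mathcal{A}(\Sigma)^{H}$, which means every element of $H$ fixes $\mathcal{A}(\Sigma_1)$ pointwise; therefore each such element lies in $\operatorname{Gal}_{\Sigma_1}\mathcal{A}(\Sigma)=H'$. Taking invariants reverses inclusions, so this yields
\[
\mathcal{A}(\Sigma)^{H'}\subseteq \mathcal{A}(\Sigma)^{H}.
\]
Also, $\mathcal{A}(\Sigma_1)\subseteq \mathcal{A}(\Sigma)^{H'}$ holds because every $f\in H'$ fixes $\mathcal{A}(\Sigma_1)$ by the very definition of $H'$. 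Thus we have the chain $\mathcal{A}(\Sigma_1)\subseteq\mathcal{A}(\Sigma)^{H'}\subseteq\mathcal{A}(\Sigma)^{H}$.

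Next I would derive maximality in $\mathcal{A}(\Sigma)^{H'}$ from maximality in $\mathcal{A}(\Sigma)^{H}$ by a direct contradiction argument. Suppose $\mathcal{A}(\Sigma_1)$ were not maximal among cluster subalgebras of $\mathcal{A}(\Sigma)$ lying inside $\mathcal{A}(\Sigma)^{H'}$. Then there would exist a cluster subalgebra $\mathcal{A}(\Sigma_2)$ of $\mathcal{A}(\Sigma)$ with $\mathcal{A}(\Sigma_1)\subsetneq \mathcal{A}(\Sigma_2)\subseteq \mathcal{A}(\Sigma)^{H'}$. But by the inclusion above, $\mathcal{A}(\Sigma_2)\subseteq\mathcal{A}(\Sigma)^{H}$ as well, contradicting the hypothesis that $\mathcal{A}(\Sigma_1)$ is maximal in $\mathcal{A}(\Sigma)^{H}$. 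Hence $\mathcal{A}(\Sigma_1)\in \mathcal{M}_{sub}^{H'}$, which precisely says that $\mathcal{A}(\Sigma_1)\subseteq\mathcal{A}(\Sigma)^{H'}\subseteq\mathcal{A}(\Sigma)$ is a Galois-like extension.

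There is no real obstacle here; the statement is essentially formal, obtained by transferring maximality along the chain of invariants $\mathcal{A}(\Sigma)^{H'}\subseteq\mathcal{A}(\Sigma)^{H}$ induced by $H\subseteq H'$. The only subtlety worth flagging is that although one might expect $H=H'$, this need not hold (as the preceding examples in the paper show), so the argument must go through the inclusions rather than an equality of the groups themselves.
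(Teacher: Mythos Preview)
Your proof is correct and follows the same route as the paper's own argument: use $H\subseteq H'$ to get $\mathcal{A}(\Sigma)^{H'}\subseteq\mathcal{A}(\Sigma)^{H}$, then transfer the maximality of $\mathcal{A}(\Sigma_1)$ from the larger invariant set to the smaller one, so that $\mathcal{A}(\Sigma_1)\in\mathcal{M}_{sub}^{H'}$. The paper's proof is a one-line version of exactly this reasoning.
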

	\begin{proof}
		Since $ \mathcal{A}(\Sigma_{1}) $ is maximal in $ \mathcal{A}(\Sigma)^{H'} \subset \mathcal{A}(\Sigma)^{H} $ as a cluster subalgebra of $ \mathcal{A}(\Sigma) $, we have $ \mathcal{A}(\Sigma_{1}) \in \mathcal{M}_{sub}^{H'} $.
		
	\end{proof}

	Being similar to the Galois theory in fields, we have the following theorem for cluster subalgebras of a cluster algebra and subgroups of its automorphism group.
	
	\begin{thm}\label{galad}
		Let $ H_{1}, H_{2} \in \mathit{2}^{\text{\em Aut}\mathcal{A}} \backslash \text{ker}\phi $, where $ \mathcal{A}$ is a cluster algebra. Then $ H_{1} $ is conjugate to $ H_{2} $ if and only if there exist $ f \in \text{\em Aut}\mathcal{A} $, $ \mathcal{A}(\Sigma_{1}) \in \mathcal{M}_{sub}^{H_{1}}$, $ \mathcal{A}(\Sigma_{2}) \in \mathcal{M}_{sub}^{H_{2}} $ such that the restriction of $ f $ is a cluster isomorphism between $ \mathcal{A}(\Sigma_{1}) $ and $ \mathcal{A}(\Sigma_{2}) $.
	\end{thm}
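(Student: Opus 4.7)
The plan is to prove both directions by transporting structure along the cluster automorphism $f$ and exploiting the definition of $\mathcal{M}_{sub}^{H}$.

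For the ``if'' direction, suppose $f\in\text{Aut}\mathcal{A}$ restricts to a cluster isomorphism $f|_{\mathcal{A}(\Sigma_1)}:\mathcal{A}(\Sigma_1)\to\mathcal{A}(\Sigma_2)$, so $f(\mathcal{A}(\Sigma_1))=\mathcal{A}(\Sigma_2)$. I would show $H_2=fH_1f^{-1}$ by a direct computation: for any $g\in H_1$ and $z\in\mathcal{A}(\Sigma_1)$, $fgf^{-1}(f(z))=fg(z)=f(z)$, which shows $fgf^{-1}$ fixes $\mathcal{A}(\Sigma_2)$ pointwise, so $fH_1f^{-1}\subseteq\text{Gal}_{\Sigma_2}\mathcal{A}=H_2$. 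The reverse inclusion $f^{-1}H_2f\subseteq\text{Gal}_{\Sigma_1}\mathcal{A}=H_1$ is symmetric, giving the conjugacy.

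For the ``only if'' direction, suppose $H_2=fH_1f^{-1}$ for some $f\in\text{Aut}\mathcal{A}$. Pick any $\mathcal{A}(\Sigma_1)\in\mathcal{M}_{sub}^{H_1}$ (nonempty by hypothesis); I will show $f(\mathcal{A}(\Sigma_1))\in\mathcal{M}_{sub}^{H_2}$, which immediately produces the desired $\mathcal{A}(\Sigma_2)$. Since $f$ is a cluster automorphism, $f(\mathcal{A}(\Sigma_1))$ is again a cluster subalgebra of $\mathcal{A}$. To see it lies in $\mathcal{A}^{H_2}$: for $z\in\mathcal{A}(\Sigma_1)$ and any $g=fhf^{-1}\in H_2$, one computes $g(f(z))=fh(z)=f(z)$.

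The step that needs the most care is proving maximality of $f(\mathcal{A}(\Sigma_1))$ in $\mathcal{A}^{H_2}$ among cluster subalgebras. I would argue by contradiction: if $f(\mathcal{A}(\Sigma_1))\subsetneq\mathcal{A}(\Sigma')\subseteq\mathcal{A}^{H_2}$ with $\mathcal{A}(\Sigma')$ a cluster subalgebra, then since $f$ is a cluster automorphism of $\mathcal{A}$, $f^{-1}(\mathcal{A}(\Sigma'))$ is a cluster subalgebra strictly containing $\mathcal{A}(\Sigma_1)$; using $H_1=f^{-1}H_2f$ and the $H_2$-invariance of $\mathcal{A}(\Sigma')$, a symmetric calculation to the one above shows $f^{-1}(\mathcal{A}(\Sigma'))\subseteq\mathcal{A}^{H_1}$, contradicting the maximality of $\mathcal{A}(\Sigma_1)$ inside $\mathcal{M}_{sub}^{H_1}$. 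Finally, to verify the Galois condition $\text{Gal}_{f(\Sigma_1)}\mathcal{A}=H_2$: the inclusion $H_2\subseteq\text{Gal}_{f(\Sigma_1)}\mathcal{A}$ was shown above; conversely, if $g$ fixes $f(\mathcal{A}(\Sigma_1))$ pointwise, then $f^{-1}gf$ fixes $\mathcal{A}(\Sigma_1)$ pointwise, hence $f^{-1}gf\in\text{Gal}_{\Sigma_1}\mathcal{A}=H_1$, so $g\in fH_1f^{-1}=H_2$.

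The main obstacle I anticipate is subtle rather than computational: one must check that transporting a mixing-type sub-seed by a cluster automorphism again yields a mixing-type sub-seed satisfying the condition of Theorem \ref{subseed}, so that $f(\mathcal{A}(\Sigma_1))$ is genuinely a cluster subalgebra in the sense fixed in Section 2, and similarly for $f^{-1}(\mathcal{A}(\Sigma'))$. This follows because cluster automorphisms preserve exchange matrices (up to sign) and commute with mutations in the admissible sense, so the sub-seed conditions $b_{xy}=0$ required by Theorem \ref{subseed} are preserved under $f$ and $f^{-1}$. Once this technical verification is in place, the rest of the argument reduces to the bookkeeping outlined above.
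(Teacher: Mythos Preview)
Your proposal is correct and follows essentially the same route as the paper's proof: both directions transport structure along $f$, verifying in the ``only if'' direction that $f(\mathcal{A}(\Sigma_1))$ is a cluster subalgebra contained in $\mathcal{A}^{H_2}$, maximal by a contradiction argument, and with Galois group exactly $H_2$; and in the ``if'' direction establishing $fH_1f^{-1}\subseteq H_2$ and $f^{-1}H_2f\subseteq H_1$. Your explicit remark about verifying that cluster automorphisms carry mixing-type sub-seeds to mixing-type sub-seeds satisfying the condition of Theorem~\ref{subseed} is a point the paper passes over in a single clause, so your treatment is, if anything, slightly more careful.
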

	\begin{proof}
		Suppose $ H_{1} $ is conjugate to $ H_{2} $, then there exists $ f \in \text{Aut}\mathcal{A} $ such that $ f^{-1}H_{2}f=H_{1} $ or $ fH_{1}f^{-1}=H_{2} $. If $ x \in \mathcal{A}(\Sigma_{1}) $, let $ \mathcal{A}(\Sigma_{1}) \in \mathcal{M}_{sub}^{H_{1}}$, then $ f(\mathcal{A}(\Sigma_{1}) ) $ is also a cluster subalgebra of $ \mathcal{A} $, which is denoted by $ \mathcal{A}(\Sigma_{2}) $. Since $ \mathcal{A}(\Sigma_{1}), \mathcal{A}(\Sigma_{2})$ are cluster subalgebras of $ \mathcal{A} $ and $ f \in \text{Aut}\mathcal{A} $, $ f $ commutes with mutations in $ \mathcal{A}(\Sigma_{1}) $ and $ \mathcal{A}(\Sigma_{2}) $. Thus $f $ is a cluster isomorphism between $ \mathcal{A}(\Sigma_{1}) $ and $ \mathcal{A}(\Sigma_{2}) $.
		
		The cluster subalgebra $ \mathcal{A}(\Sigma_{2}) $ consist of $ f(x), x \in \mathcal{A}(\Sigma_{1}) $, which is invariant under the action of $ H_{2}=fH_{1}f^{-1} $, thus $ H_{2} \leq \text{Gal}_{\Sigma_{2}}\mathcal{A} $. Besides, if $g\in\text{Gal}_{\Sigma_{2}}\mathcal{A}$, then $ f^{-1}(g(f(x)))=f^{-1}(f(x))=x, x \in \mathcal{A}(\Sigma_{1}) $, $ f^{-1}gf \in H_{1}=f^{-1}H_{2}f $. Thus $ g \in H_{2} $, which means $ H_{2} \geq \text{Gal}_{\Sigma_{2}}\mathcal{A} $. So $ H_{2} = \text{Gal}_{\Sigma_{2}}\mathcal{A} $.
		
		On the other hand, if there exists a cluster subalgebra $ \mathcal{A}(\Sigma_{3}) $ such that $ \mathcal{A}(\Sigma_{2}) \subsetneq \mathcal{A}(\Sigma_{3}) \subset \mathcal{A}^{H_{2}} $, then $ f^{-1}( \mathcal{A}(\Sigma_{3}) ) $ is the cluster subalgebra and $ \mathcal{A}(\Sigma_{1}) \subsetneq f^{-1}( \mathcal{A}(\Sigma_{3}) ) \subset \mathcal{A}^{H_{1}} $, which is contrary to the definition of $ \mathcal{A}(\Sigma_{1}) $. Thus $ \mathcal{A}(\Sigma_{2}) $ is a maximal cluster subalgebra in $ \mathcal{A}^{H_{2}} $ and $ \mathcal{A}(\Sigma_{2}) \in \mathcal{M}_{sub}^{H_{2}}$.
		
		Conversely, if there exist $ f \in \text{Aut}\mathcal{A} $, $ \mathcal{A}(\Sigma_{1}) \in \mathcal{M}_{sub}^{H_{1}}$, $ \mathcal{A}(\Sigma_{2}) \in \mathcal{M}_{sub}^{H_{2}} $ such that $ f $ is an isomorphism from $ \mathcal{A}(\Sigma_{1}) $ to $ \mathcal{A}(\Sigma_{2}) $. We can see that $ f^{-1}H_{2}f \leq H_{1} $. Besides, $ f^{-1} $ is also an isomorphism from $ \mathcal{A}(\Sigma_{2}) $ to $ \mathcal{A}(\Sigma_{1}) $, thus we have $ f^H_{1}f^{-1} \leq H_{2} $, which means $ f^{-1}H_{2}f \geq H_{1} $. So $ f^{-1}H_{2}f = H_{1} $.
		
	\end{proof}
	
	The maximal cluster subalgebras belonging to $ \mathcal{M}_{sub}^{H} $ in $ \mathcal{A}^{H} $ is not unique in general. Now we discuss some common properties of such maximal cluster algebras in $ \mathcal{M}_{sub}^{H} $ and then connect them with the method of Galois theory. We begin with some conjectures:
	
	\begin{conj}\label{srk}
		For a cluster algebra $ \mathcal{A} $ and a subgroup $H$ of $\text{\em Aut}\mathcal{A} $, let $ \mathcal{A}(\Sigma_{1}) $, $ \mathcal{A}(\Sigma_{2}) \in \mathcal{M}^{H}_{sub} $. Then the rank of $ \mathcal{A}(\Sigma_{1}) $ is the same as that of $ \mathcal{A}(\Sigma_{2}) $.
	\end{conj}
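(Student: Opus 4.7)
The plan is to identify a numerical invariant of the pair $(\mathcal{A}, H)$ that equals the common rank of every element of $\mathcal{M}^H_{sub}$. By Theorem \ref{subseed}, each $\mathcal{A}(\Sigma_i) \in \mathcal{M}^H_{sub}$ arises as a mixing-type sub-seed $\Sigma_{I_0^i, I_1^i}$ of some seed $\Sigma^i = (\textbf{x}^i_{ex}, \textbf{x}^i_{fr}, \widetilde{B}^i)$ of $\mathcal{A}$, so the rank of $\mathcal{A}(\Sigma_i)$ equals $|\textbf{x}^i_{ex}| - |I_0^i \cup I_1^i|$, and every exchange variable of $\Sigma_i$ lies in $\mathcal{A}^H$. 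The problem thus reduces to showing that this integer is intrinsic to $H$.

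First, I would analyze the $H$-action on the exchange graph of $\mathcal{A}$, whose vertices are clusters and whose edges are mutations, and on which $H$ acts naturally by permutation. For a cluster $\textbf{x}$ of $\mathcal{A}$, let $\textbf{x}^H \subset \textbf{x}$ denote the variables fixed pointwise by $H$. By the compatibility condition in Theorem \ref{subseed}, not every subset of $\textbf{x}^H$ can be realized as the exchange variables of a mixing-type sub-seed yielding a cluster subalgebra: the block of $\widetilde{B}$ relating the chosen variables to the discarded exchange directions must vanish. Writing $r(\textbf{x}, H)$ for the maximum rank achievable from $\textbf{x}$ via such a mixing-type sub-seed inside $\mathcal{A}^H$, the conjecture is equivalent to asserting that $r(\textbf{x}, H)$ is independent of the choice of $\textbf{x}$.

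Second, in the feasible-surface case treated in Theorem \ref{c1}, this specialises through the bijection between tagged triangulations and clusters together with the isomorphism between the tagged mapping class group and the cluster automorphism group. An $H$-invariant, pairwise compatible collection of tagged arcs descends to a collection of arcs on the quotient orbifold, and its maximal size is controlled by Proposition \ref{narc} applied to this quotient. Since the resulting count depends only on topological data (genus, number of boundary components, number of punctures, number of marked points) of the quotient, it is intrinsic to $H$ and to $(S, M)$, not to the chosen cluster, giving an independent proof of the conjecture in that setting.

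The main obstacle in the general case is the absence of a ``quotient object'' playing the role of $S/H$ for an abstract cluster algebra: two elements of $\mathcal{M}^H_{sub}$ can arise from seeds that are not linked by any $H$-equivariant mutation sequence, so a purely local comparison is unlikely to succeed. Promising substitutes include (i) tracking the $\mathbb{Z}$-rank of the lattice spanned by $g$-vectors of $H$-fixed exchange variables in an ambient principal-coefficient cluster algebra and showing that this rank depends only on $H$, using Theorem \ref{cg} and Lemma \ref{g} to translate back to exchange variables; (ii) working inside the scattering diagram or cluster complex, on which $H$ acts, and extracting the dimension of the fixed fan; (iii) first treating the skew-symmetric or acyclic cases via an $H$-action on a cluster category. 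I expect the hardest step to be converting any such numerical invariant back into the combinatorial rank formula $|\textbf{x}^i_{ex}| - |I_0^i \cup I_1^i|$ of Theorem \ref{subseed} uniformly in $i$, since nothing a priori forces the two maximal mixing-type sub-seeds $\Sigma_{I_0^1, I_1^1}$ and $\Sigma_{I_0^2, I_1^2}$ to sit inside seeds comparable under mutation.
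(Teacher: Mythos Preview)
The statement you are attempting to prove is Conjecture \ref{srk}, which the paper explicitly leaves open in full generality; the paper only establishes the special case of cluster algebras from feasible surfaces (Theorem \ref{c1}). There is therefore no ``paper's proof'' of the general statement to compare against, and your proposal is, appropriately, a research outline rather than a proof. You correctly flag that the general case faces serious obstacles.

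For the feasible-surface case, however, your suggested route via a quotient orbifold $S/H$ has a genuine gap. The subgroup $H \le \text{Aut}\mathcal{A}(S,M) \cong \mathcal{MCG}^{\pm}_{\bowtie}(S,M)$ is typically infinite (it may contain Dehn twists of infinite order), so $S/H$ need not exist as a reasonable orbifold, and Proposition \ref{narc} certainly does not apply to it. Moreover, an $H$-invariant tagged arc does not ``descend'' to a quotient in any useful sense: it is simply an arc in $S$ fixed setwise by $H$. The paper's method is quite different. Rather than passing to a quotient, it constructs a \emph{maximal invariant subsurface} $U \subset S$ (Definition \ref{def-maxsubsurface}, built inside the proof of Theorem \ref{c1}) with the property that $x_\gamma \in \mathcal{A}(S,M)^H$ if and only if $\gamma$ lies in $U$ up to isotopy. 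Key preliminary steps (Remark \ref{extodirect}, Lemmas \ref{asymnosymlem} and \ref{symnoasymlem}) reduce to the case where every $f\in H$ is orientation-preserved, and then the rank of any maximal $\mathcal{A}(\Sigma_i)$ equals the cardinality of a maximal tagged-arcs set of $(U,U\cap M)$, shown to be a constant via the counting Lemmas \ref{subnarclem} and \ref{addarc-lem}. This sidesteps quotients entirely and works for infinite $H$.

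For the general conjecture, your proposed invariants (the $\mathbb{Z}$-rank of the lattice of $g$-vectors of $H$-fixed exchange variables, the dimension of the fixed fan in the scattering diagram, an $H$-action on a cluster category) are plausible starting points, but the paper gives no indication that any of them succeeds, and you yourself identify the crux: converting such an invariant back into the combinatorial rank $|\textbf{x}^i_{ex}|-|I_0^i\cup I_1^i|$ of Theorem \ref{subseed} uniformly in $i$. At present this remains open.
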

	
	Conversely, assume $ \mathcal{A}(\Sigma_{1}) $ and $ \mathcal{A}(\Sigma_{2}) $ are cluster subalgebras of $ \mathcal{A} $ with the same number of exchange variables, they may not always be in a same set $ \mathcal{M}_{sub}^{H} $ for some $ H \le \text{Aut}\mathcal{A}$.
	
	\begin{ex}\label{ex6gon}
		In fact, consider a hexagon with its vertices labelled by $ 1,...,6 $ clockwise, then the cluster subalgebra $ \mathcal{A}_{1} $ with a frozen variable corresponding to the diagonal $ \gamma_{14} $ is in $ \mathcal{M}_{sub}^{H_{1}} $, where $ H_{1} $ is generated by two reflections and a rotation fixing $ \gamma_{14} $. However, the cluster subalgebra $ \mathcal{A}_{2} $ with two frozen variables corresponding to the diagonals $ \gamma_{26}, \gamma_{35} $ is in $ \mathcal{M}_{sub}^{H_{2}} $, where $ H_{2} $ is generated by one of these reflections fixing $ \gamma_{26}, \gamma_{35} $. Furthermore, the numbers of exchange variables in these subalgebras are all zero.
	\end{ex}
	
	By the Galois map in (\ref{galmap}), give a totally ordered sequence of cluster subalgebras, and their corresponding fixed subgroups also form a totally ordered sequence. Conversely, give a totally ordered sequence of cluster subgroups, we wonder to ask if there is a totally ordered sequence of cluster subalgebras. This is our consideration in the following conjecture.
	
	Let $ \mathcal{A}(\Sigma) $ be a cluster algebra with initial extended exchange matrix $\widetilde{B}=(b_{xy}) $. We define $ \Sigma^{(d)} $ to be the mixing-type sub-seed of $ \Sigma $ obtained by removing any frozen variable $ y $ satisfying $ b_{yx}=0 $ for any exchange variable $ x $ in $ \Sigma $. Obviously, $\mathcal{A}(\Sigma^{(d)}) $ is a cluster subalgebra of $ \mathcal{A}(\Sigma) $. We call $ \Sigma^{(d)} $ and $\mathcal{A}(\Sigma^{(d)}) $ the {\bf reduced sub-seed} and {\bf reduced cluster subalgebra} of $ \Sigma $ and $ \mathcal{A}(\Sigma) $ respectively.
	
	\begin{conj}\label{ssq}
		Let $ H_{1}, H_{2},...,H_{s} \in \mathit{2}^{\text{\em Aut}\mathcal{A}} \backslash \text{ker}\phi $ and $ H_{1} \le H_{2} \le \cdots \le H_s$. Then for any $ \mathcal{A}(\Sigma_{i}) \in \mathcal{M}^{H_{i}}_{sub}, 1 \leq i \leq s $, there exist $ \mathcal{A}(\Sigma_{j}) \in \mathcal{M}^{H_{j}}_{sub}$ for $ j=1,..., i-1, i+1, ..., s $ such that
		$$ \mathcal{A}(\Sigma_{1}^{(d)}) \ge \cdots \ge \mathcal{A}(\Sigma_{i}^{(d)}) \ge \cdots \ge \mathcal{A}(\Sigma_{s}^{(d)}). $$
	\end{conj}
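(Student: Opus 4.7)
My plan is to attack the conjecture by reducing to the one-step case and then producing two symmetric extension procedures, one going toward larger subgroups and one going toward smaller ones. Concretely, by induction on the length $s$ of the chain, it suffices to prove: (\textbf{Downward}) given $H_i \le H_{i+1}$ and a Galois-like extension subalgebra $\mathcal{A}(\Sigma_i) \in \mathcal{M}^{H_i}_{sub}$, there exists $\mathcal{A}(\Sigma_{i+1}) \in \mathcal{M}^{H_{i+1}}_{sub}$ with $\mathcal{A}(\Sigma_{i+1}^{(d)}) \le \mathcal{A}(\Sigma_i^{(d)})$; (\textbf{Upward}) given the same inclusion and a $\mathcal{A}(\Sigma_{i+1})$, there exists $\mathcal{A}(\Sigma_i)$ with $\mathcal{A}(\Sigma_i^{(d)}) \ge \mathcal{A}(\Sigma_{i+1}^{(d)})$. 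Starting from the given index $i$ in the conjecture and applying these two lemmas alternately fills in every slot of the chain.

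For the downward step I would start from the mixing-type sub-seed $\Sigma_i = \Sigma_{I_0, I_1}$ supplied by Theorem \ref{subseed} and refine the pair $(I_0, I_1)$ to land inside $\mathcal{A}^{H_{i+1}}$. The strategy is: an exchange variable $x$ of $\Sigma_i$ is $H_i$-fixed; under the larger $H_{i+1}$ it may acquire a nontrivial but still finite orbit or an infinite one. In the first case, symmetrization $x \mapsto \prod_{f \in H_{i+1}} f(x)$ suggests a natural candidate that should be reincorporated as a frozen generator; in the second case, by Theorem \ref{crit}(i) the variable fails to lie in $\mathcal{A}^{H_{i+1}}$ at all and must be removed. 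This yields a candidate mixing-type sub-seed $\Sigma'$; I then check the compatibility condition $b_{xy}=0$ of Theorem \ref{subseed} using the block structure of the extended exchange matrix (exactly as in the proof of Proposition \ref{asub}), and finally enlarge $\mathcal{A}(\Sigma')$ by Zorn's lemma inside $\mathcal{A}^{H_{i+1}}$ until it becomes maximal. The reduction $(-)^{(d)}$ absorbs the genuine non-uniqueness in the Zorn enlargement, so that the inequality $\mathcal{A}(\Sigma_{i+1}^{(d)}) \le \mathcal{A}(\Sigma_i^{(d)})$ survives regardless of the choices made. The upward step is dual: starting from $\Sigma_{i+1}$, one tries to \emph{unfreeze} variables whose $H_i$-orbit is a singleton, or adjoin further cluster variables lying in $\mathcal{A}^{H_i} \setminus \mathcal{A}^{H_{i+1}}$, and then maximize.

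The hard part is pinning down maximality in $\mathcal{M}^{H_{i+1}}_{sub}$. Maximality there is a two-sided condition: the subalgebra must be maximal inside $\mathcal{A}^{H_{i+1}}$ \emph{and} its Galois group must equal $H_{i+1}$ exactly. Enlarging a cluster subalgebra inside $\mathcal{A}^{H_{i+1}}$ can shrink the Galois group strictly below $H_{i+1}$, so the naive Zorn enlargement may leave the stratum $\mathcal{M}^{H_{i+1}}_{sub}$; Example \ref{ex6gon} shows that a single cluster subalgebra can sit maximally over two incomparable fixed-point algebras, so the correspondence $H \mapsto \mathcal{M}^H_{sub}$ is not functorial in any naive sense. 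I expect that a general argument will need either a $\mathcal{D}$-stable or stable monomial basis (so that Theorem \ref{crit}(ii) kicks in and turns ``infinite orbit'' into a usable certificate of non-membership), or a concrete geometric model in which maximality can be read off a combinatorial invariant; this is precisely what makes the feasible-surface case of Theorem \ref{c2} tractable in Section 4.

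Finally, the whole plan implicitly assumes Conjecture \ref{srk}: if the rank were not an invariant of $\mathcal{M}^H_{sub}$, then the monotonicity $\mathcal{A}(\Sigma_i^{(d)}) \ge \mathcal{A}(\Sigma_{i+1}^{(d)})$ could be trivially obstructed by a bad choice of representatives. I would therefore sequence the work as: first settle Conjecture \ref{srk} under the same hypotheses that permit the extension lemmas, then execute the Downward/Upward construction above, and finally assemble the full chain by induction. The main obstacle throughout remains the failure of naive functoriality of $H \mapsto \mathcal{M}^H_{sub}$; the reduction to $\Sigma^{(d)}$ is the key technical device that tames this failure by quotienting out the genuinely non-canonical frozen-variable data.
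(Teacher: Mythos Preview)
The statement is Conjecture \ref{ssq}, which the paper does \emph{not} prove in general; it is confirmed only for cluster algebras from feasible surfaces (Theorem \ref{c2}). Your proposal is thus an attack on an open problem, and it has genuine gaps beyond those you already flag. The most concrete is the symmetrization step: the orbit product $\prod_{y \in H_{i+1}x} y$ is not a cluster variable of $\mathcal{A}$, so it cannot serve as a frozen variable in a mixing-type sub-seed---by the definition of $\Sigma_{I_0,I_1}$ and Theorem \ref{subseed}, the variables of a sub-seed must be a subset of the cluster variables of some seed of $\mathcal{A}$. Your ``reincorporate as frozen'' mechanism therefore does not produce a sub-seed at all. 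The second gap is the one you yourself name: Zorn-maximizing inside $\mathcal{A}^{H_{i+1}}$ may yield a subalgebra whose Galois group strictly contains $H_{i+1}$, and your assertion that passing to $\Sigma^{(d)}$ absorbs this is unsupported, since that operation only strips frozen variables disconnected from all exchange variables and says nothing about the Galois group.

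For contrast, the paper's surface proof (Theorem \ref{c2}) sidesteps both issues via Corollary \ref{surfacegaloislikecor}: when every element of $H$ is orientation-preserved, \emph{every} maximal cluster subalgebra in $\mathcal{A}(S,M)^H$ already lies in $\mathcal{M}^H_{sub}$, so the Zorn enlargement cannot leave the stratum. The downward and upward steps are then carried out geometrically using nested maximal invariant subsurfaces $U_{i+1}\subset U_i$ (Remark \ref{maxsubsurfaceproperty-rmk}), and the non-orientation-preserved case is dispatched by observing that $\Sigma_j^{(d)}$ is then trivial (Remark \ref{extodirect}). No orbit analysis or symmetrization is used; the rigidity of arcs on surfaces replaces your algebraic refinement, and an analogue of Corollary \ref{surfacegaloislikecor} is precisely the missing ingredient your general strategy would need.
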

	
	Suppose Conjecture \ref{ssq} holds for a cluster algebra $\mathcal A$, then we can define a map such that
	\begin{equation}\label{reversemap}
		\zeta:\;\; \text{The set of ascending sequences in}\;\mathit{2}^{\text{Aut}\mathcal{A}}\rightarrow \text{The set of descending sequences in}\; \mathit{2}^{ \mathcal{A} }
	\end{equation}
	\[\text{ via}\;\;\; \zeta(H_i)= \mathcal{A}(\Sigma_{i}^{(d)}),\; i=1,\cdots,s.\]
	We call $\zeta$ the {\bf reverse Galois map} from $\mathit{2}^{\text{Aut}\mathcal A(\Sigma)}$ to $\mathit{2}^{\mathcal A(\Sigma)}$.
	
	Let $A \subset \mathit{2}^{\mathcal A(\Sigma)} $ and $G \subset \mathit{2}^{\text{Aut}\mathcal A(\Sigma)}$ be maximal subsets such that $\zeta$ and $\xi$ are mutual invertible maps between $A$ and $G$. Then we call $\xi$ a {\bf Galois correspondence} and $\zeta$ a {\bf reverse Galois correspondence} respectively between $A$ and $G$.

	In the next section, we confirm Conjecture \ref{ssq} and Conjecture \ref{srk} to be true for cluster algebras from surfaces with some conditions.

	\subsection{Galois extensions, stable monomial bases and $\mathcal{D}$-stable bases for cluster algebras}

\begin{defi}\label{def-stablemonomialbases}
	Let $ \mathcal{A} $ be a cluster algebra. If $W$ is a $\mathbb{Q}$-basis for $\mathcal{A}$ such that elements in $W$ are monomials of cluster variables and for any $f\in \text{Aut}\mathcal{A} $, $W$ is stable for $f$, i.e., $f(W)\subset W$, then $W$ is called a \textbf{stable monomial basis} for $\mathcal{A}$.
\end{defi}

\begin{defi}\label{defbasis}
	(i)\; Let $ \mathcal{A} $ be a cluster algebra and $ R $ its polynomial ring of frozen variables over $ \mathbb{Q} $.
	We choose a  set
$ C_{q}  $ such that elements in $C_{q} $ can be represented as Laurent polynomials of some cluster with positive coefficients.

Let $\mathcal{D} $ be a set of some subsets of $ C_{q} \cup \mathcal{X} $, such that for any $ U \in \mathcal{D} $, $ U \cap \mathcal{X} $ is contained in a cluster, where $\mathcal X$ is the set of all cluster variables.
	
	(ii)\; Besides, for $ z \in  C_{q} \cup \mathcal{X} $ we set
	\[ \langle k, z \rangle =\left\{ \begin{array} {ll}
		z^{k} & x \in \mathcal{X} \\
		P_{k}(z) & x \notin  \mathcal{X}
	\end{array} \right. \]
	where $ k \in \mathbb{Z}_{+} $ and $ P_{k}(z) \in R[z] $.
	
	(iii)\; Assume that for any $ f \in \text{\em Aut}\mathcal{A} $,
	\\
	(a)\; it holds $ f(\langle k, z \rangle)=\langle k, f(z) \rangle$ \;for any\; $ k \in \mathbb{N}, z \in U$;
	\\
	(b)\; $\mathcal{D}$ is stable for $ f $;
	\\
	(c)\;
	$ W=\bigcup_{U\in \mathcal{D}} \{ \prod_{x \in U'}\langle k_{x}, x \rangle | U' \subset U, k_{x} \in \mathbb{Z}_{+} \} $
	is a basis for $ \mathcal{A} $ and if $U_{1}\neq U_{2} $, then $\prod_{x \in U_{1}}\langle k_{x}, x \rangle \neq \prod_{x \in U_{2}}\langle k_{x}, x \rangle$.
	
	Then $ W $ is called a \textbf{$\mathcal{D}$-stable basis} for $\mathcal{A} $, the elements in $ C_{q} $ are called  \textbf{quasi cluster variables} of $\mathcal{A} $ and the element in $ \mathcal{D} $ is called a \textbf{maximal compatible set} of $ \mathcal{A} $.
	
\end{defi}

Suppose $W$ is a $\mathcal{D}$-stable basis for $ \mathcal{A} $ and $f \in\text{Aut}\mathcal{A} $. If $ \prod_{x \in U'}\langle k_{x}, x \rangle \in W$ for $ U' \subset U  \in \mathcal{D} $, then $ f(U) \in \mathcal{D}$ and $ f(U') \subset U $. Thus
$ f(\prod_{x \in U'}\langle k_{x}, x \rangle)=\prod_{f(x) \in f(U')}\langle k_{x}, f(x) \rangle \in W $ and hence $ W $ is also stable for any $ f \in \text{Aut}\mathcal{A} $. It means $ f $ is a bijection over $ W $.

Suppose $W$ is a stable monomial basis for  $ \mathcal{A} $. We say that $W$ has \textbf{unique expression} if  for $ v_{i}=x_{i1}^{k_{i1}}x_{i2}^{k_{i2}}...x_{is_{i}}^{k_{is_{i}}} \in W, i=1, 2$,  $v_{1}=v_{2}$ means $ s_{1}=s_{2} $ and there is a permutation $ \sigma \in S_{s_{1}} $ such that $ x_{1j}=x_{2\sigma(j)},$ for $ j=[1,s_{1}] $.

\begin{lem}\label{uniquequasicluster}
	Let $ \mathcal{A} $ be a cluster algebra with a stable monomial basis $W$. If the upper cluster algebra $ \mathcal{U} $ of $ \mathcal{A} $ is factorial, then $W$ has the unique expression.
\end{lem}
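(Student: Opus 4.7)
The plan is to exploit the hypothesis that $\mathcal{U}$ is a UFD together with the algebraic role played by exchange cluster variables versus frozen variables inside $\mathcal{U}$. The guiding observation is that, since $\mathcal{U} \subseteq \mathcal{L}(t)$ for every $t \in \mathbb{T}_n$ and each frozen variable is invertible in $\mathcal{L}(t)$, all frozen variables are units in $\mathcal{U}$, while each exchange cluster variable is expected to be an irreducible (equivalently, prime, since $\mathcal{U}$ is factorial) element of $\mathcal{U}$.

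First I would decompose each basis element $v_{i} = x_{i1}^{k_{i1}} \cdots x_{is_{i}}^{k_{is_{i}}}$ as $v_{i} = e_{i} \cdot f_{i}$, where $e_{i}$ collects the exchange-variable factors and $f_{i}$ the frozen-variable factors, and then view the equation $v_{1} = v_{2}$ as taking place inside the UFD $\mathcal{U}$. Applying uniqueness of factorization, the multisets of irreducible factors of $e_{1}$ and $e_{2}$ must coincide up to units and reordering. The key step is then to upgrade ``up to associates'' to true equality of cluster variables: if two distinct exchange cluster variables $x$ and $y$ satisfied $x = u \cdot y$ with $u \in \mathcal{U}^{\times}$, then $u$ would be a Laurent monomial in the unit generators, i.e., in the frozen variables. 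Comparing invariants attached to $x$ and $y$ that detect only the exchange coordinates, such as $F$-polynomials or $g$-vectors under principal coefficients (via Lemma \ref{g}), or denominator vectors with respect to a fixed cluster, forces $u = 1$ and hence $x = y$.

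Once the exchange parts of $v_{1}$ and $v_{2}$ are identified factor-by-factor, I would cancel them from both sides, leaving an equality of monomials purely in the frozen variables. Because the frozen variables are algebraically independent elements of $\mathcal{F}$, such a monomial identity immediately forces the multisets of frozen variables with multiplicities to coincide. Combining the exchange and frozen conclusions yields $s_{1} = s_{2}$ together with the required permutation $\sigma \in S_{s_{1}}$ with $x_{1j} = x_{2\sigma(j)}$, which is precisely the unique expression property.

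The step I expect to be the principal obstacle is verifying, purely from the factoriality of $\mathcal{U}$ plus the general set-up of the excerpt, that each exchange cluster variable is irreducible in $\mathcal{U}$ and that distinct exchange cluster variables are not associates there; all the other steps are either formal UFD manipulations or follow from algebraic independence of an initial cluster. Irreducibility can typically be extracted from the Laurent phenomenon and positivity, which say that any exchange cluster variable is expressed as a Laurent polynomial with positive coefficients that is not divisible in $\mathcal{U}$ by any further non-unit; non-associateness is the more delicate point and is where an invariant such as the $g$-vector or $F$-polynomial enters decisively.
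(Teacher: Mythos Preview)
Your outline matches the paper's proof: factor inside the UFD $\mathcal{U}$, match the exchange-variable factors up to units, upgrade ``associate'' to equality, cancel, and finish with algebraic independence of the frozen variables. The paper resolves the two obstacles you flag by citation rather than by the invariants you propose. Irreducibility of exchange variables in $\mathcal{U}$ and the description of $\mathcal{U}^{\times}$ as Laurent monomials in frozen variables are quoted from \cite{GLS13}. For non-associateness, your appeal to Lemma~\ref{g} or to $F$-polynomials is not quite on target, since those statements are formulated for principal coefficients and do not apply directly to an arbitrary $\mathcal{A}$; the paper instead clears denominators to get $p_{j2}\,x_{1j} = p_{j1}\,x_{2\sigma'(j)}$ with $p_{j1}, p_{j2}$ monomials in frozens, notes that both sides are extended cluster monomials, and invokes the linear independence of extended cluster monomials from \cite{CaoLi20b} to conclude $x_{1j} = x_{2\sigma'(j)}$. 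With that substitution your argument is exactly the paper's.
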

\begin{proof}
	For $ v_{1}, v_{2}\in W$, write $v_{i}=x_{i1}^{k_{i1}}x_{i2}^{k_{i2}}...x_{is_{i}}^{k_{is_{i}}},\; i=1, 2 $. Since $v_{1}=v_{2}$, we have
	\begin{equation}\label{mxmonomialseq}
		x_{11}^{k_{11}}x_{12}^{k_{12}}...x_{1s_{1}}^{k_{1s_{1}}} = x_{21}^{k_{21}}x_{22}^{k_{22}}...x_{2s_{2}}^{k_{2s_{2}}}.
	\end{equation}
	
	According to \cite{GLS13}, exchange variables are irreducible in $\mathcal{U}$, and invertible elements are Laurent monomials of frozen variables.
		If $ \mathcal{U} $ is factorial, $ x_{11}^{k_{11}}x_{12}^{k_{12}}...x_{1s_{1}}^{k_{1s_{1}}} = x_{21}^{k_{21}}x_{22}^{k_{22}}...x_{2s_{2}}^{k_{2s_{2}}}$ means
		the numbers of exchange variables in two sides are the same
		and there is a permutation $ \sigma' $ such that $ x_{1j}=p_{j}x_{2\sigma'(j)} $ for exchange variables $ x_{1j} $ and $ x_{2\sigma'(j)} $, where $ p_{j} $ are Laurent monomials of frozen variables.
	
		Let $ p_{j}=p_{j1}/p_{j2} $, where $ p_{j1} $ and $ p_{j2} $ are monomials of frozen variables. Then we have $p_{j2}x_{1j}=p_{j1}x_{2\sigma'(j)} $ in $ \mathcal{A} $. Note that both sides are extended cluster monomials.
		According to \cite{CaoLi20b}, extended cluster monomials are linearly independent. Thus $ x_{1j}= x_{2\sigma'(j)} $. Removing the exchange variables of both sides in (\ref{mxmonomialseq}), the monomials of the remaining frozen variables are equal.
		Because of algebraic independence of  frozen variables,
		$ s_{1}=s_{2} $ and there is a permutation $ \sigma \in S_{s_{1}} $ such that $ x_{1j}=x_{2\sigma(j)}, j\in [1,s_{1}] $.

\end{proof}

\begin{lem}\label{double}
	Let $ \mathcal{A}(\Sigma) $ be a cluster algebra with the extended exchange matrix of full rank, exchange variables $ x_{1},x_{2},x_{1}',x_{2}' \in \mathcal{A} $ and frozen variables $ y_{1}, ..., y_{s_{1}} $, $ y_{1}', ..., y_{s_{2}}' \in \mathcal{A} $. If $ y_{1}^{k_{1}}\cdots y_{s_{1}}^{k_{s_{1}}} x_{1}x_{2}= y_{1}'^{k_{1}}\cdots y_{s_{2}}'^{k_{s_{2}}} x_{1}'x_{2}' $ and the compatibility degree $d(x_1,x_2)\le 2$ for $ k_{1}, ...,k_{s_{1}}, k_{2}', ...,k_{s_{2}}' \in \mathbb{N} $, then $ x_{1}=x_{1}', x_{2}=x_{2}' $, or $ x_{1}=x_{2}', x_{2}=x_{1}' $, and $ s_{1} = s_{2}$ and there is a permutation $ \sigma \in S_{s_{1}} $ such that $ y_{i}= y_{\sigma(i)}', i\in [1,s_{1}] $.
\end{lem}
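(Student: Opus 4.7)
The plan is to mimic the argument for Lemma \ref{uniquequasicluster}, specialized to the quadratic setting. Under the full-rank hypothesis on $\widetilde{B}$, by \cite{GLS13} the upper cluster algebra $\mathcal{U}$ is factorial, each exchange variable is irreducible in $\mathcal{U}$, and the units of $\mathcal{U}$ are precisely the Laurent monomials in the frozen variables.

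First I would pass to $\mathcal{U}$ and cancel the frozen prefactors from both sides of the given identity, obtaining an equation $x_1 x_2 = u\, x_1' x_2'$ with $u$ a Laurent monomial in the frozen variables. Factoriality of $\mathcal{U}$ then matches the irreducibles $\{x_1, x_2\}$ with $\{x_1', x_2'\}$ up to units, so after possibly swapping $x_1'$ and $x_2'$ we obtain $x_1 = u_1 x_1'$ and $x_2 = u_2 x_2'$ in $\mathcal{U}$, where $u_1, u_2$ are Laurent monomials in the frozen variables satisfying $u_1 u_2 = u$.

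Next I would promote each associate relation to an honest equality of exchange variables inside $\mathcal{A}$. Writing $u_1 = p_1/q_1$ with $p_1, q_1$ coprime monomials in the frozen variables, the identity $x_1 = u_1 x_1'$ becomes $q_1 x_1 = p_1 x_1'$ in $\mathcal{A}$. Both sides qualify as extended cluster monomials, since the frozen factors lie in every extended cluster. By the linear independence of extended cluster monomials established in \cite{CaoLi20b}, these two extended cluster monomials must coincide symbolically, forcing $x_1 = x_1'$ and $p_1 = q_1$, hence $u_1 = 1$. The same reasoning applied to $x_2$ yields $x_2 = x_2'$ and $u_2 = 1$; in the swapped scenario one concludes instead that $x_1 = x_2'$ and $x_2 = x_1'$. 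The compatibility bound $d(x_1, x_2)\le 2$ is what cleanly restricts the available pairings of the two irreducible factors on each side. Substituting the matched exchange variables back into the original identity collapses it to $y_1^{k_1}\cdots y_{s_1}^{k_{s_1}} = (y_1')^{k_1'}\cdots (y_{s_2}')^{k_{s_2}'}$ in $\mathcal{A}$, and the algebraic independence of the frozen variables delivers $s_1 = s_2$ together with the permutation $\sigma\in S_{s_1}$ with $y_i = y_{\sigma(i)}'$.

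The main obstacle I foresee is the middle step: a naive UFD argument in $\mathcal{U}$ only recovers the pair of exchange variables up to frozen monomial associates, and one must carefully verify that clearing denominators produces identities of extended cluster monomials in $\mathcal{A}$ before invoking \cite{CaoLi20b}. Together with the bound $d(x_1, x_2)\le 2$, which prevents additional skein-type identifications from producing further pairings of irreducibles, this upgrade from ``associate'' to ``equal'' is the step where care is needed.
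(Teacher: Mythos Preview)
Your proof has a genuine gap at the very first step: the claim that full rank of $\widetilde{B}$ implies, via \cite{GLS13}, that $\mathcal{U}$ is factorial is not correct. \cite{GLS13} does give you irreducibility of exchange variables and the description of units, but factoriality is established there only under separate hypotheses (essentially acyclicity-type conditions), not under a full-rank assumption on the extended exchange matrix. This is precisely why Corollary~\ref{corcrit} lists ``$\mathcal{U}$ factorial'' and ``extended exchange matrix of full rank plus the degree-$2$/compatibility bound'' as two \emph{independent} alternatives: if full rank already gave factoriality, Lemma~\ref{double} would be a special case of Lemma~\ref{uniquequasicluster} and the compatibility hypothesis $d(x_1,x_2)\le 2$ would be superfluous. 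Indeed, in your UFD argument that hypothesis does no work at all---unique factorization alone would pair up the irreducibles---and your remarks about it ``restricting pairings'' or blocking ``skein-type identifications'' do not correspond to any actual step.

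The paper's argument is entirely different and uses both hypotheses essentially. One fixes a cluster $\mathbf{x}$ containing $x_1$ and writes $x_2,x_1',x_2'$ as reduced Laurent polynomials $P/D$ in $\mathbf{x}$. Comparing $x_1$-denominator degrees on both sides and using $d(x_1,x_2)\le 2$ forces at least one of $D_1',D_2'$ to be free of $x_1$; by \cite{CaoLi20a} that $x_i'$ then sits in a common cluster with $x_1$. Working in such a cluster one reads off $P_2=P_2'$. The full-rank hypothesis is used only at this point: it allows one to write the principal-coefficient extended matrix as $\widetilde{B}'=\Psi\widetilde{B}$ and, via the change-of-coefficients formula of \cite{FWZ17}, to transport the equality $P_2=P_2'$ to an equality of $F$-polynomials in the principal-coefficient algebra, whence $x_2=x_2'$ (and then $x_1=x_1'$). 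The frozen part then follows by algebraic independence. So the lemma is really a denominator-vector/$F$-polynomial argument, not a UFD argument.
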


\begin{proof}
	Since $ x_{1},x_{2},x_{1}',x_{2}' $ are exchange variables of $\mathcal{A} $, $ x_{2}, x_{1}', x_{2}' $ can be express by a cluster $ \textbf{x}=\{ w_{1},w_{2},...,w_{m} \} $, where $ w_{1}=x_{1} $ as Laurent polynomials:
	\[ x_{2}=\frac{P_{2}(\textbf{x})}{D_{2}(\textbf{x})},\;\; x_{1}'=\frac{P_{1}'(\textbf{x})}{D_{1}'(\textbf{x})}, \;\; x_{2}'=\frac{P_{2}'(\textbf{x})}{D_{2}'(\textbf{x})}, \]
	where $ P_{2},P_{1}',P_{2}' $ are polynomials of all cluster variables $ \textbf{x} $ without monomials as their factors and $ D_{2},D_{1}',D_{2}' $ are Laurent monomials of $ \textbf{x} $. In this case, for $ i=1,2,...,n$, the compatibility degrees $ d(w_{i},x_{2}), d(w_{i},x_{1}')$, $ d(w_{i},x_{2}') $ are the degrees of $ D_{2},D_{1}',D_{2}' $ corresponding to $ w_{i} $ respectively. According to \cite{CaoLi20a}, we have $ d(w_{i},x_{2}), d(w_{i},x_{1}'),d(w_{i},x_{2}') \geq -1, i=1,2,...,n $, and
	\begin{equation}\label{dd}
		y_{1}^{k_{1}}\cdots y_{s_{1}}^{k_{s_{1}}} x_{1} \frac{P_{2}(\textbf{x})}{D_{2}(\textbf{x})} = y_{1}'^{k_{1}}\cdots y_{s_{2}}'^{k_{s_{2}}} \frac{P_{1}'(\textbf{x})}{D_{1}'(\textbf{x})} \frac{P_{2}'(\textbf{x})}{D_{2}'(\textbf{x})} \;\;
		\Longrightarrow \;\; y_{1}^{k_{1}}\cdots y_{s_{1}}^{k_{s_{1}}} x_{1}= y_{1}'^{k_{1}}\cdots y_{s_{2}}'^{k_{s_{2}}} \frac{D_{2}(\textbf{x})}{D_{1}'(\textbf{x})D_{2}'(\textbf{x})}\frac{P_{1}'(\textbf{x})P_{2}'(\textbf{x})}{P_{2}(\textbf{x})}.
	\end{equation}

	We now prove $ x_{1}' = x_{1} $ or $ x_{2}'= x_{1} $.
	
	Since $ P_{2},P_{1}',P_{2}' $ have not $ w_{1}=x_{1} $ as a factor, $ \frac{P_{1}'(\textbf{x})P_{2}'(\textbf{x})}{P_{2}(\textbf{x})} $ has not $ x_{1} $ as a factor. So by (\ref{dd}),
	$\frac{D_{2}(\textbf{x})}{D_{1}'(\textbf{x})D_{2}'(\textbf{x})} $ has $ x_{1} $ as a factor.
	
	Since $ d(x_1,x_2) \leq 2 $, it follows that $ D_{1}' $ or $ D_{2}' $ has not $ x_{1} $ as a factor.
	
	In the case if $ D_{1}' $ has not $ x_{1} $ as a factor. According to \cite{CaoLi20a}, there exists a cluster containing both $ x_{1} $ and $ x_{1}' $. We can assume that $ \textbf{x} $ also contain $ x_{1}' $, thus
		\[ y_{1}^{k_{1}}\cdots y_{s_{1}}^{k_{s_{1}}}
		x_{1} \frac{P_{2}(\textbf{x})}{D_{2}(\textbf{x})} = y_{1}'^{k_{1}}\cdots y_{s_{2}}'^{k_{s_{2}}} x_{1}' \frac{P_{2}'(\textbf{x})}{D_{2}'(\textbf{x})}.
		\]
	Because $ P_{2} $ and $ P_{1} $ do not have monomials as their factor, we have $ P_{2}=P_{2}' $.
	
	Assume $ \textbf{x} \in \Sigma $, where $ \Sigma=(\textbf{x}_{ex},\textbf{x}_{fr},\widetilde{B}_{m \times n}) $ is a seed of $ \mathcal{A} $, and suppose $ \mathcal{A}(\Sigma') $ be the cluster algebras with principle coefficients where initial seed $ \Sigma'=(\textbf{x}_{ex},\textbf{x}_{fr}',$ $\widetilde{B}'_{2n \times n} ) $, $ \textbf{x}_{ex}=\{w_{1},w_{2},...,w_{n} \} $, $ \textbf{x}_{fr}'= \{ w_{n+1}',w_{n+2}',...,w_{2n}' \}$.
	
	Since $\widetilde{B}$ has full rank, we have $ \widetilde{B}'=\Psi \widetilde{B}$ where $ \Psi=(\psi_{ij})_{2n \times m} $. Consider two cluster patterns
	\[
	(\Sigma(t))_{t \in \mathbb{T}_{n}} = (\textbf{x}_{ex}(t),\textbf{x}_{fr}(t),\widetilde{B}(t))_{t \in \mathbb{T}_{n}}, \;\;\;
	(\Sigma'(t))_{t \in \mathbb{T}_{n}} =(\textbf{x}_{ex}'(t),\textbf{x}_{fr}'(t),\widetilde{B}'(t))_{t \in \mathbb{T}_{n}}
	\]
	with initial seeds $ \Sigma(t_{0})=\Sigma $, $ \Sigma'(t_{0})=\Sigma' $ respectively. According to \cite{FWZ17}, if we define two morphisms $\gamma$ and $\varphi$ of algebras by setting
	\[ \gamma (w_{j})= \prod_{i=1}^{n}w_{i}^{\psi_{ij}}\prod_{i=n+1}^{2n}w_{i}'^{{\psi_{ij}}}, \;\;\;
	\varphi(w_{j})=\prod_{i=n+1}^{2n}w_{i}'^{\psi_{ij}}, \;\;\; j=1,...,m,\]
	then we have
	$ x_{i;t}'=\frac{\gamma(x_{i;t})}{\varphi(x_{i,t})}. $
	
	Thus for $ x_{2}, x_{2}' $, we can set
	\[ z_{2}= \frac{\gamma(P_{2})/\gamma(D_{2})}{\varphi(P_{2})/\varphi(D_{2})}, z_{2}'= \frac{\gamma(P_{2}')/\gamma(D_2')}{\varphi(P_{2}')/\varphi(D_{2}')}, \]
	which are exchange variables in $ \mathcal{A}(\Sigma') $. Since $ \psi_{ij}=0,i \neq j; \,\phi_{ii}=1 $, for $ i,j \le n $, then we have $ \gamma (w_{j}) =w_{j}\varphi(w_{j}), j=1,...,m $, and hence $ \varphi(D_{2})/\gamma(D_{2}) =D_{2}$, $ \varphi(D_{2}')/\gamma(D_{2}') =D_{2}' $. Thus the $ F $-polynomial of $ z_{2}$ is equal to
	\[ {\frac{\gamma(P_{2})/\gamma(D_{2})}{\varphi(P_{2})/\varphi(D_{2})}}\Big|_{w_{1}=\cdots =w_{n}=1} =\frac{\gamma(P_{2})}{\varphi(P_{2})}\frac{\varphi(D_{2})}{\gamma(D_{2})}\Big|_{w_{1}=\cdots =w_{n}=1}=
	{\frac{\gamma(P_{2})}{\varphi(P_{2})}}\Big|_{w_{1}=\cdots =w_{n}=1}, \]
	and the $ F $-polynomial of $ z'_{2}$ is equal to $ \gamma(P_{2}') / \varphi(P_{2}') |_{w_{1}=\cdots =w_{n}=1} $. They are equal since $P_2=P'_2$, which means $ z_{2}= z_{2}' $. On the other hand, $ \widetilde{B}=\Psi' \widetilde{B}'$, so according to \cite{FWZ17}, we have $ x_{2}=x_{2}' $ and hence $ x_{1}=x_{1}' $.
	
	In the case if $ D_{2}' $ does not have $ x_{1} $ as a factor, we can similarly prove $ x_{2}=x_{1}' $ and $ x_{1}=x_{2}' $.
	
	Considering the algebraically independence of frozen variables, we have $ s_{1} = s_{2}$ and there is a permutation $ \sigma \in S_{s_{1}} $ such that $ y_{i}= y_{\sigma(i)}', i\in [1,s_{1}] $.

\end{proof}

Thus we have the following corollary.
\begin{cor}\label{2unique}
	Let $ \mathcal{A}$ be a cluster algebra with a stable monomial basis $W=\{w_{\lambda}\}_{\lambda \in \Lambda}$ and let the extended exchange matrices of $\mathcal A$ be of full rank. If the degrees of elements in $W$ except extended cluster monomials are $2$ and the compatibility degree of each two exchange variables in the expression of $w_{\lambda}$ is not larger than $2$, then $W$ has the unique expression.
\end{cor}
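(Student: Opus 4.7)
The strategy is to split into cases based on the total exchange degree of the monomial expressions for $v_1 = v_2 \in W$, with Lemma \ref{double} handling the heart of the argument. Write $v_i = F_i \cdot z_{i1}^{\beta_{i1}} \cdots z_{iu_i}^{\beta_{iu_i}}$, separating the frozen part $F_i$ from the exchange part, where the $z_{ij}$ are exchange variables.

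The primary case is when both exchange parts have total degree $2$, so each takes the form $F_i \cdot z_{i1} z_{i2}$. The compatibility hypothesis $d(z_{i1}, z_{i2}) \le 2$ allows Lemma \ref{double} to be invoked directly: it yields $\{z_{11}, z_{12}\} = \{z_{21}, z_{22}\}$ as multisets and equates the frozen parts, giving precisely the unique expression condition. When both sides have exchange degree $0$ or $1$, each $v_i$ is necessarily an extended cluster monomial of the simple form $F$ or $F \cdot x$, and unique expression follows from algebraic independence of the frozen variables together with the linear independence of distinct extended cluster monomials from \cite{CaoLi20b}, much as in the argument for Lemma \ref{uniquequasicluster} but without invoking factoriality.

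The remaining cases are those in which at least one side has exchange degree $\ge 3$ or the two sides have mismatched exchange degrees. By the hypothesis that non-extended cluster monomials in $W$ have degree exactly $2$, any side whose exchange degree is not $2$ must be an extended cluster monomial. For two extended cluster monomials on both sides, uniqueness of the labeled expression can be deduced from \cite{CaoLi20b} together with algebraic independence of cluster variables inside a single cluster, which forces the same multiset of (cluster variable, exponent) pairs.

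The main obstacle is the mixed-degree subcase, where one side is an extended cluster monomial of exchange degree $\ne 2$ while the other is a non-extended cluster monomial of degree $2$. Ruling this out requires first passing to the principal coefficient cover through \cite{FWZ17}, which is available since the extended exchange matrix has full rank, and then comparing $g$-vectors together with the irreducibility of exchange variables in the upper cluster algebra from \cite{GLS13} to force the two exchange degrees to agree; once the degrees match, the argument reduces either to Lemma \ref{double} or to the extended cluster monomial case already handled. This degree-matching step, invisible for field extensions but essential here because cluster variables from different clusters satisfy non-trivial relations, is where the full rank assumption and the principal coefficient lift do most of the work.
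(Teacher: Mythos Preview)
Your decomposition into cases and the use of Lemma \ref{double} for the central degree-$2$ case matches the paper exactly; in fact the paper's entire proof is that one sentence, deferring all of the weight to Lemma \ref{double} and leaving the extended-cluster-monomial and mixed cases unmentioned. So you are being more careful than the paper here, not less.

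Your treatment of the pure extended-cluster-monomial case via \cite{CaoLi20b} is correct and mirrors the argument inside the proof of Lemma \ref{uniquequasicluster}. The weak point is your mixed case. Appealing to irreducibility from \cite{GLS13} in the absence of a factoriality hypothesis does not let you compare the number of irreducible factors on the two sides, and the $g$-vector of a product records only a single vector in $\mathbb{Z}^n$, which cannot by itself distinguish a product of two exchange variables from a product of three with the same total $g$-vector. The step ``force the two exchange degrees to agree'' is therefore not justified by the tools you name. A more direct route avoids the principal-coefficient lift entirely: if one expression for $w$ is a monomial in some cluster $\mathbf{x}$ while the other is $p\,x_1x_2$ with at least one $x_j\notin\mathbf{x}$, then expanding the latter in $\mathbf{x}$ produces at least two Laurent monomials, because the numerator of the Laurent expansion of $x_j$ has positive coefficients and more than one term; this contradicts the first expression being a single monomial. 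The paper does not supply this argument either, so strictly speaking its proof shares the gap you are trying to close.
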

\begin{proof} Due to the condition, any pair $v_{i}, i=1, 2$ in $W$ except extended cluster monomials can be expressed as $ v_{i}=p_{i}x_{i1}x_{i2}$, where $ p_{i} $ are a monomial of frozen variables and $ x_{1}, x_{2} $ are cluster variables such that the compatibility degree $ d(x_{i1},x_{i2}) \le 2 $. If $v_{1}= v_{2}$, we have $ p_{1}x_{11}x_{12}=p_{2}x_{21}x_{22} $. The corollary follows immediately from Lemma \ref{double}.

\end{proof}

Note that  a standard monomial basis is not necessarily a  $\mathcal{D}$-stable basis and vice versa.

According to \cite{CK08}, all extended cluster monomials form a basis if the cluster algebra is of finite type, Thus the set of all extended cluster monomials from the cluster algebra of finite type is a stable monomial basis as well as a $\mathcal{D}$-stable basis. 	

We have also some examples from cluster algebras of non-finite type. For example, the bases $ \mathcal{B}^{\circ} $, $ \mathcal{B} $ and $ \mathcal{B}' $ given in \cite{MSW13, Thu14} without coefficient are $\mathcal{D}$-stable bases, which is introduced and proved in the next section.

If a cluster algebra $\mathcal{A}$ is acyclic, i.e., there is an acyclic exchange matrix in a seed of $\mathcal{A}$, a standard monomial basis for $\mathcal{A}$ was introduced in \cite{BFZ05}, which is a stable monomial basis with the unique expression.

If $\mathcal A$ has a $\mathcal{D}$-stable basis, then we give a discussion on the condition for $ \mathcal{A}(\Sigma_{1})\subseteq\mathcal{A}(\Sigma)$ to be a Galois extension.

\begin{thm}\label{crit}
		Let $\mathcal{A}=\mathcal{A}(\Sigma) $ be a cluster algebra and $ H $ be a subgroup of the cluster automorphism group of $\mathcal{A}(\Sigma)$. Suppose $ \mathcal{A}(\Sigma_{1}) $ is maximal in $ \mathcal{A}^{H} $ as a cluster subalgebra of $ \mathcal{A}$.

	(i)\; If $ \mathcal{A}^{H} = \mathcal{A}(\Sigma_{1}) $, then $ |Hz| = + \infty $ for any $ z \in \mathcal{A} \backslash \mathcal{A}(\Sigma_{1}) $ which can be represented as Laurent polynomials of some cluster with positive coefficients.
	
	(ii)\; Conversely, $ \mathcal{A}^{H} = \mathcal{A}(\Sigma_{1}) $ holds if either (a)   $\mathcal A$ has a stable monomial basis with the unique expression  and $ |Hz| = + \infty $ for any cluster variable $ z \in \mathcal{A} \backslash \mathcal{A}(\Sigma_{1}) $ or
	(b)   $\mathcal A$ has a $\mathcal{D}$-stable basis  $ |Hz| = + \infty $ for any cluster variable or quasi cluster variable $ z \in \mathcal{A} \backslash \mathcal{A}(\Sigma_{1}) $.
\end{thm}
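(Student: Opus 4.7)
The plan is to treat (i) via a contrapositive argument that exploits symmetric functions of the finite $H$-orbit, and (ii) by expanding every $H$-invariant element in the prescribed basis and tracking the $H$-orbits of the basis vectors.

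For (i), I would argue by contrapositive. Assume $z \in \mathcal{A} \setminus \mathcal{A}(\Sigma_{1})$ admits a positive Laurent expansion in some cluster and that $|Hz| = k < \infty$. Writing $Hz = \{z_{1} = z, z_{2}, \ldots, z_{k}\}$, every power sum $p_{n} = \sum_{i=1}^{k} z_{i}^{n}$ is $H$-fixed and therefore lies in $\mathcal{A}^{H} = \mathcal{A}(\Sigma_{1})$. Newton's identities (available because $\mathcal{A}$ is a $\mathbb{Q}$-algebra) then place all elementary symmetric polynomials $e_{j}(z_{1}, \ldots, z_{k})$ in $\mathcal{A}(\Sigma_{1})$, so $z$ satisfies the monic equation
\[
T^{k} - e_{1} T^{k-1} + \cdots + (-1)^{k} e_{k} = 0
\]
over $\mathcal{A}(\Sigma_{1})$. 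I would then leverage the positive Laurent expansion of $z$ (and of each $z_{i} = h(z)$, since cluster automorphisms preserve positivity of Laurent expansions) together with unique factorization in the ambient Laurent polynomial ring of the cluster to peel off the factor $(T - z)$ and conclude that $z$ already lies in $\mathcal{A}(\Sigma_{1})$, contradicting the choice of $z$.

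For (ii)(a), take an arbitrary $u \in \mathcal{A}^{H}$ and expand it in the stable monomial basis $W$ as $u = \sum_{\lambda} c_{\lambda} w_{\lambda}$. Since each $f \in H$ is an automorphism with $f(W) \subseteq W$ (and $f^{-1}(W) \subseteq W$), $f$ restricts to a permutation of $W$; combined with $f(u) = u$ and linear independence of $W$, this forces $c_{\lambda} = c_{f^{-1}(\lambda)}$, i.e., the coefficients are constant on $H$-orbits in $W$. Because $u$ is a finite sum, every $w_{\lambda}$ with $c_{\lambda} \neq 0$ must sit in a finite $H$-orbit. The unique-expression hypothesis now guarantees that the monomial $w_{\lambda} = x_{1}^{k_{1}} \cdots x_{s}^{k_{s}}$ is determined by the underlying multi-set of cluster variables, so a finite $H$-orbit of $w_{\lambda}$ forces a finite $H$-orbit of the multi-set and hence of each constituent cluster variable $x_{i}$. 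The standing hypothesis then places each $x_{i}$ in $\mathcal{A}(\Sigma_{1})$, giving $w_{\lambda} \in \mathcal{A}(\Sigma_{1})$ and consequently $u \in \mathcal{A}(\Sigma_{1})$. The reverse inclusion $\mathcal{A}(\Sigma_{1}) \subseteq \mathcal{A}^{H}$ is automatic from the assumption that $\mathcal{A}(\Sigma_{1})$ is maximal in $\mathcal{A}^{H}$.

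Part (ii)(b) follows the same pattern with $\mathcal{D}$-stable basis elements $w_{\lambda} = \prod_{x \in U'} \langle k_{x}, x \rangle$, $U' \subseteq U \in \mathcal{D}$. Property (a) of Definition \ref{defbasis} yields $f(w_{\lambda}) = \prod_{x \in U'} \langle k_{x}, f(x) \rangle$, property (b) keeps $\mathcal{D}$ stable under $f$, and property (c) gives the injectivity that lets one pass from a finite $H$-orbit of $w_{\lambda}$ to a finite $H$-orbit of $U'$, and hence of each $x \in U'$. The hypothesis on cluster and quasi cluster variables then places each $x \in U'$ in $\mathcal{A}(\Sigma_{1})$. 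The principal obstacle I expect is the argument in (i): symmetric functions alone only show that $z$ is integral over $\mathcal{A}(\Sigma_{1})$, and converting integrality into literal membership requires the positive Laurent hypothesis to force a unique factorization of the monic polynomial in the ambient Laurent ring. A secondary, more technical point appears in (ii)(b): one must verify that $\langle k, x \rangle \in \mathcal{A}(\Sigma_{1})$ whenever $x \in \mathcal{A}(\Sigma_{1})$, which is immediate for cluster variables (where $\langle k, x \rangle = x^{k}$) but requires a check on the coefficient ring for quasi cluster variables, since $P_{k}(z) \in R[z]$ is defined using the frozen-variable ring $R$ of the ambient $\mathcal{A}$ rather than that of $\mathcal{A}(\Sigma_{1})$.
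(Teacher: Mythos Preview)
Your treatment of (ii) is essentially the paper's argument, run in the direct rather than contrapositive direction; the paper also expands an $H$-fixed element in the basis $W$, uses stability of $W$ to see that $f$ permutes the finitely many basis vectors in the support, and then descends via the uniqueness hypothesis to a finite $H$-orbit on the constituent cluster (or quasi cluster) variables. The technical worry you flag in (ii)(b) about whether $\langle k,x\rangle\in\mathcal A(\Sigma_1)$ for quasi cluster $x$ is real in your direction but disappears in the paper's contrapositive formulation: the paper only needs to exhibit \emph{one} cluster or quasi cluster variable $z\in\mathcal A\setminus\mathcal A(\Sigma_1)$ with finite orbit, not to push every $w_\lambda$ into $\mathcal A(\Sigma_1)$.

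For (i), however, your route through Newton's identities and integrality has a genuine gap, and you have in fact already computed the object that finishes the proof without it. The very first power sum $p_1=\sum_{i=1}^k z_i$ is $H$-invariant, so under the assumption $\mathcal A^H=\mathcal A(\Sigma_1)$ it must be a Laurent polynomial in the cluster $\mathbf x'$ of $\Sigma_1$. But each $z_i$ has a \emph{positive} Laurent expansion in the ambient cluster $\mathbf x\supseteq\mathbf x'$, and since $z_1=z\notin\mathcal A(\Sigma_1)$ some variable $x\in\mathbf x\setminus\mathbf x'$ genuinely appears in the expansion of $z_1$; positivity prevents this $x$ from being cancelled by the other $z_i$, so $p_1$ is not a Laurent polynomial in $\mathbf x'$ alone. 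That is already the contradiction. There is no need to pass to higher $p_n$, elementary symmetric functions, or integrality, and your proposed ``peel off $(T-z)$ via unique factorization in the Laurent ring'' step does not work as stated: integrality of $z$ over $\mathcal A(\Sigma_1)$ in no way forces $z\in\mathcal A(\Sigma_1)$, and the factorization $\prod_i(T-z_i)$ takes place over $\mathcal A$, not over $\mathcal A(\Sigma_1)$, so no descent is available. The paper's argument is exactly this direct positivity-prevents-cancellation observation applied to the orbit sum.
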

\begin{proof}
	(i)\; It is equivalent to prove that if there exists a cluster variable $ x \in \mathcal{A} \backslash \mathcal{A}(\Sigma_{1}) $ such that $ |Hx| < + \infty $, then $ \mathcal{A}^{H} \neq \mathcal{A}(\Sigma_{1}) $.
	
	According to Theorem \ref{subseed}, we can assume $ \Sigma_{1} $ is a mixing-type sub-seed of $ \Sigma $. If $ \Sigma_{1} $ contains all cluster variables in $ \Sigma $, then $ H $ is the trivial group and the sufficiency is obvious.
	
	Thus we can assume $ \textbf{x}'\neq \textbf{x} $, where $ \textbf{x} $ (resp. \textbf{x}') is the cluster of $ \Sigma $ (resp. $ \Sigma_{1} $). Suppose $ z \in \mathcal{A} \backslash \mathcal{A}(\Sigma_{1}) $ and $ |Hz| =\{z_{i},i\in[1,t] \}, z_{1}=z $. The Laurent expression of $ z_{i} $ with respect to $ \textbf{x} $ are
	\[ z_{i}=\frac{P_{i}(\textbf{x})}{D_{i}(\textbf{x})}, i \in [1,t]\]
	where $ P_{i}(\textbf{x}) $ are polynomials of $ \textbf{x} $ with positive coefficients and $ D_{i}(\textbf{x}) $ are extended cluster monomials of $ \textbf{x} $. For any monomial $M$ of $\textbf{x}'$, if $z_{1}M \in \mathcal{A}(\Sigma_{1})$, then $f(z_{1})=z_{1}$, $f\in H$. Thus $z_{1} \in \mathcal{A}^{H}\backslash \mathcal{A}(\Sigma_{1})$ and this completes the proof. Otherwise, we can replace $z_{1}$ with $z_{1}M$ for some monomial $M$ such that $\frac{P_{i}(\textbf{x})}{D_{i}(\textbf{x})}, i\in [1,t]$ are polynomials with respect to $\textbf{x}'$.
		
	Let $ Z = \sum_{i=1}^{t}z_{i} $, then $ Z \in \mathcal{A}^{H} $. Assume $ Z \in \mathcal{A}(\Sigma_{1}) $, then $ z= L(\textbf{x}') $, where $ L(\textbf{x}') $ is a Laurent polynomial of $ \textbf{x}' $. And hence we have
	\begin{equation}\label{exelemet}
		L(\textbf{x}')= \sum_{i}^{t}\frac{P_{i}(\textbf{x})}{D_{i}(\textbf{x})}.
	\end{equation}
	Since $ z_{1} \in \mathcal{A} \backslash \mathcal{A}(\Sigma_{1}) $, there exists $ x\in \textbf{x} \backslash \textbf{x}' $ such that $ x $ appears in $ \frac{P_{1}(\textbf{x})}{D_{1}(\textbf{x})} $. Since coefficients of $P_{i}(\textbf{x}), i\in [1, t]$ are positive, $ x $ also appears in the right of (\ref{exelemet}).
	But $ x $ does not appear on left of (\ref{exelemet}), which means the equality does not hold. Thus $ z \notin \mathcal{A}(\Sigma_{1}) $ and hence $ \mathcal{A}^{H} \neq \mathcal{A}(\Sigma_{1}) $.
	
	(ii)\; It only needs to prove under the assumption that $ \mathcal{A} $ has a stable monomial basis with the unique expression (resp. $\mathcal{D}$-stable basis) and $ \mathcal{A}^{H} \neq \mathcal{A}(\Sigma_{1}) $, it following that there exists a cluster variable (resp. cluster variable or quasi cluster variable) $ z \in \mathcal{A} \backslash \mathcal{A}(\Sigma_{1}) $ such that $ |Hz| < + \infty $.
	
	Suppose $ \mathcal{A}^{H} \neq \mathcal{A}(\Sigma_{1}) $, then there exists $ v \in \mathcal{A} \backslash \mathcal{A}(\Sigma_{1}) $, such that $ f(v)=v $, for any $ f \in H $.
	Since $\mathcal A$ has a stable monomial basis  (resp. $\mathcal{D}$-stable basis) $W$, then $z$ can be represented by $ v=k_{1}v_{1}+k_{2}v_{2}+\cdots +k_{s}v_{s}, 1,\cdots,s\in I$ and $ k_{1}, ..., k_{s} \in \mathbb{Q} $. Thus
	\[ f(z)=k_{1}f(v_{1})+k_{2}f(v_{2})+ \cdots +k_{s}f(v_{s})= k_{1}v_{1}+k_{2}v_{2}+ \cdots +k_{s}v_{s} . \]
	Because $ W$ is stable for $f$, the equality holds if and only if $ f $ is bijective over $ \{v_{1},v_{2},...,v_{s} \} $.
	
	Since $ v \in \mathcal{A} \backslash \mathcal{A}(\Sigma_{1}) $, there exists $ v_{k} $ such that $ v_{k} \in \mathcal{A} \backslash \mathcal{A}(\Sigma_{1}) $.
	
	(a) \, In the case $W$ is a stable monomial basis, denote by $X$ the set of all cluster variables appearing in expressions of $v_{i}, i \in [1, s]$. Then $f$ restricted to $X$ is bijective. Since $ v_{k} \in \mathcal{A} \backslash \mathcal{A}(\Sigma_{1}) $, there is $z \in X$ such that $z \in \mathcal{A} \backslash \mathcal{A}(\Sigma_{1})$. Because of the arbitrariness of $ f $, we have $|Hz| < + \infty $.
	
	(b) \, In the case $W$ is a $\mathcal{D}$-stable basis, write $ v_{j} = \prod_{x \in U_{j}}\langle k_{x}, x \rangle, j=[1,s], k_{x} \in \mathbb{Z}_{+} $. Denote by $X_{1}$ and $X_{2}$ all cluster variables and quasi cluster variables in $\bigcup_{j=1}^{s} U_{j} $, respectively. Since the set of cluster variables is stable for $f$ and according to Definition \ref{defbasis} (iii), $f$ restricted to $X_{1}$ and $X_{2}$ are bijective. Since $ v_{k} \in \mathcal{A} \backslash \mathcal{A}(\Sigma_{1}) $, there is a cluster variable $z \in X_{1}$ or quasi cluster variables $z \in X_{2}$ such that $z \in \mathcal{A} \backslash \mathcal{A}(\Sigma_{1})$. Because of the arbitrariness of $ f $, we have $|Hz| < + \infty $.
	
\end{proof}

According to Lemma \ref{uniquequasicluster} and Corollary \ref{2unique}, we have
\begin{cor}\label{corcrit}
	Let $\mathcal{A}=\mathcal{A}(\Sigma) $ be a cluster algebra with stable monomial basis $W=\{w_{\lambda}\}_{\lambda \in \Lambda}$ and a subgroup $ H \le \text{Aut} \mathcal{A}$ such that there is  a  maximal cluster subalgebra  $ \mathcal{A}(\Sigma_{1}) $ in $ \mathcal{A}^{H} $.
	Suppose $ |Hx| = + \infty $ for any cluster variable $ x \in \mathcal{A} \backslash \mathcal{A}(\Sigma_{1}) $ and
	\\
	either (i) the upper cluster algebra $ \mathcal{U} $ of $ \mathcal{A} $ is factorial
	\\
	or (ii) the extended exchange matrices of $\mathcal A$ are of full rank and the degrees of elements in $W$ except extended cluster monomials are $2$ and the compatibility degree of each two exchange variables in the expression of $w_{\lambda}$ is not larger than $2$.
	
	Then $ \mathcal{A}^{H} = \mathcal{A}(\Sigma_{1}) $.
\end{cor}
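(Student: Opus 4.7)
The plan is to recognize that Corollary \ref{corcrit} is almost immediate once one bridges the gap between the hypotheses given and the hypothesis required by Theorem \ref{crit}(ii)(a). Specifically, Theorem \ref{crit}(ii)(a) concludes $\mathcal{A}^{H} = \mathcal{A}(\Sigma_1)$ from the assumption that $\mathcal{A}$ admits a stable monomial basis \emph{with the unique expression} together with the infinite-orbit condition on cluster variables outside $\mathcal{A}(\Sigma_1)$. The corollary offers two alternative hypotheses (i) and (ii) in place of the unique-expression assumption, so the proof simply has to verify that each of these alternatives forces unique expression of the given stable monomial basis $W$, and then apply Theorem \ref{crit}(ii)(a).

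First, I would dispose of case (i). Here one assumes $\mathcal{U}$ is factorial. This is exactly the hypothesis of Lemma \ref{uniquequasicluster}, so that lemma immediately yields that $W$ has the unique expression. Since the other hypothesis of Theorem \ref{crit}(ii)(a), namely $|Hx| = +\infty$ for all cluster variables $x \in \mathcal{A}\setminus \mathcal{A}(\Sigma_1)$, is part of the assumption of the corollary, we conclude $\mathcal{A}^{H} = \mathcal{A}(\Sigma_1)$.

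Next, for case (ii), the hypotheses on full-rankness of the extended exchange matrices, the degree-two restriction on non-extended-cluster-monomial basis elements, and the compatibility-degree bound are precisely the hypotheses of Corollary \ref{2unique}. Invoking it yields the unique expression property of $W$. Once again, the infinite-orbit condition is already assumed, so Theorem \ref{crit}(ii)(a) applies and delivers $\mathcal{A}^{H} = \mathcal{A}(\Sigma_1)$.

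Since the corollary is essentially a bookkeeping step — combining Lemma \ref{uniquequasicluster}, Corollary \ref{2unique}, and Theorem \ref{crit}(ii)(a) — I do not anticipate any genuine obstacle. The only minor subtlety to double-check is that the statements line up exactly: in particular, that Theorem \ref{crit}(ii)(a) only demands the infinite-orbit condition on cluster variables (not on quasi cluster variables), which is indeed what the corollary hypothesizes, and that $\mathcal{A}(\Sigma_1)$ being maximal in $\mathcal{A}^H$ is assumed in both the corollary and in Theorem \ref{crit}. Once this alignment is confirmed, the proof reduces to one line in each case.
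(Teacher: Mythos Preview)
Your proposal is correct and matches the paper's approach exactly: the paper states the corollary as an immediate consequence of Lemma \ref{uniquequasicluster} and Corollary \ref{2unique} (feeding into Theorem \ref{crit}(ii)(a)), which is precisely the bridge you describe.
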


According to Theorem \ref{crit}, we have the following remark.

	\begin{rmk}
		For any skew-symmetrizable cluster algebra $\mathcal A$, if $ \text{\em Aut}\mathcal{A} $ is finite, then $\text{\em Aut}\mathcal{A}x < + \infty $ for any cluster variable x.
		According to Theorem \ref{crit} (i), any cluster subalgebra $ \mathcal{A}(\Sigma') \subsetneq \mathcal{A} $ is not Galois extension subalgebra.
		\\
		If $\mathcal A$ is the cluster algebra with principal coefficients, according to proposition \ref{exam}, $ \text{\em Aut}\mathcal{A} $ is finite and hence in this case any cluster subalgebra $ \mathcal{A}(\Sigma') \subsetneq \mathcal{A} $ is not Galois extension subalgebra.
	\end{rmk}

\section{Examples of $\mathcal{D}$-stable bases and Galois theory of cluster algebras from surfaces}

In this section, we give some examples of $\mathcal{D}$-stable bases for the cluster algebras from surfaces and discuss the Galois correspondence and Galois-like extensions for this cluster algebras.

Suppose $ \mathcal{A} $ is a cluster algebra with coefficients and $ \mathcal{A}' $ is the cluster algebra obtained from $ \mathcal{A}$ by specializing all frozen variables to 1, then $\mathcal A'$ is a cluster algebra without coefficients. Note that cluster automorphisms of $ \mathcal{A} $ are also automorphisms of $ \mathcal{A}'$ by specializing all frozen variables to 1, but in general, cluster automorphisms of $ \mathcal{A}' $ are not necessarily that of $ \mathcal{A} $.
So, in order to let the cluster automorphism group be as large as possible, we always suppose the corresponding cluster algebras of marked surfaces are without coefficient in this section if there is no additional explanation.
\subsection{Bangle, bracelet, and band bases as $\mathcal{D}$-stable bases }\quad

Let $ (S, M) $ be a marked surface, $ h $ be a homeomorphism of $ (S, M) $ such that $ h(M)=M $ and $ R $ is a subset of the set consisting of all punctures in $ (S, M) $, then we can obtain a cluster automorphisms of $ \mathcal{A}(S, M) $ corresponding to homeomorphism of $ (S, M) $ induced by $ h $ and $ R $:
\[ \psi_{h} \in \text{Aut}\mathcal{A}(S,M) \quad \text{defined by } \psi_{h}(x_{\gamma}):=x_{h(\gamma)} , \]
where $ \gamma \in A_{\bowtie}(S,M) $ the set consisting of all tagged arcs of $ (S,M) $ and $ x_{\gamma} $ is its counterpart cluster variable in $ \mathcal{A}(S,M) $. Besides, we set
\[ \psi_{R} \in \text{Aut}\mathcal{A}(S,M) \quad \text{defined by } \psi_{R}(x_{\gamma}):=x_{\gamma^{R}}, \]
where $ \gamma^{R} $ denote the tagged arc obtained from $ \gamma $ by changing the taggings of it at those of its endpoints that belong to $ R $. For $ h $ and $ R $ as above, denote by
$ \psi_{h,R}:=\psi_{R}\psi_{h}. $

According to \cite{Gu11,BS15,BQiu15,BY18}, we have the following theorem:

\begin{thm}\label{mcg}\cite{Gu11,BS15,BQiu15,BY18}
	Let $ (S, M) $ be a marked surface different from any one of
	(1) the 4-punctured sphere,
	(2) the once-punctured 4-gon,
	(3) the twice-punctured digon. Then the group of cluster automorphisms of the corresponding cluster algebra $\mathcal A(S, M)$ is given as follows:
	
	(a) if $ (S, M) $ is a once-punctured closed surface, then
	\[\mathcal{MCG}^{\pm}(S,M) \cong \text{\em Aut}\mathcal{A}(S,M), \; \mathcal{MCG}(S,M) \cong \text{\em Aut}^{+}\mathcal{A}(S,M) \;\;\; via\;\; \;
	h \mapsto \psi_{h},
	\]
	
	(b) if $ (S, M) $ is not a once-punctured closed surface, then
	\[\mathcal{MCG}^{\pm}_{\bowtie}(S,M) \cong \text{\em Aut}\mathcal{A}(S,M), \; \mathcal{MCG}_{\bowtie}(S,M) \cong \text{\em Aut}^{+}\mathcal{A}(S,M) \;\;\; via\;\;\;
	(h, R) \mapsto \psi_{h,R},
	\]
	where $ h $ is a representative element of a class in $ \mathcal{MCG}^{\pm}(S,M)$, $ R $ is a subset of sets of all punctures in $ (S,M)$.
\end{thm}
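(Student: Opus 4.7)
The plan is to verify that the stated correspondence $(h,R) \mapsto \psi_{h,R}$ (respectively $h \mapsto \psi_h$ in the once-punctured closed case) is a well-defined group isomorphism, proceeding in three stages. First, for well-definedness, I would check that $\psi_{h,R}$ genuinely belongs to $\text{Aut}\mathcal{A}(S,M)$. Any homeomorphism $h$ with $h(M)=M$ permutes the set of tagged arcs, and the tag change induced by $R$ also permutes tagged arcs, so $\psi_{h,R}$ permutes cluster variables by the bijection between tagged arcs and exchange variables. Moreover, since flipping a tagged arc in a tagged triangulation corresponds precisely to mutation of the associated exchange variable, the map $\psi_{h,R}$ intertwines flips and mutations and thus commutes with mutation. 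Hence $\psi_{h,R}$ is a cluster automorphism. Verifying that the assignment respects the semi-direct product law $(g_1,p_1)(g_2,p_2) = (g_1 g_2, p_1 g_1(p_2))$ is a direct computation using the definitions of $\psi_h$ and $\psi_R$, giving a group homomorphism.

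For injectivity, suppose $\psi_{h,R} = \mathrm{id}_{\mathcal{A}(S,M)}$, so $h(\gamma) = \gamma^R$ for every tagged arc $\gamma$. Testing this on arcs whose endpoints lie on the boundary (which carry only plain taggings) forces $h$ to fix each such isotopy class, and comparing plain versus notched arcs through each puncture $p$ forces $p \notin R$, so $R = \emptyset$ and $h$ fixes every tagged arc up to isotopy. Since a homeomorphism of $(S,M)$ that fixes every isotopy class of arcs is isotopic to the identity (a standard rigidity result for surfaces, granted under the exclusion of the small surfaces in the hypothesis), we conclude $h \in \text{Homeo}^+_0(S,M)$. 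The once-punctured closed case needs a minor adjustment: the global tag change at the unique puncture induces the identity cluster automorphism, which is why one works with $\mathcal{MCG}^\pm(S,M)$ rather than the tagged version.

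The main obstacle is surjectivity. Given $f \in \text{Aut}\mathcal{A}(S,M)$, since $f$ bijects cluster variables to cluster variables and clusters to clusters, the bijections between tagged arcs and exchange variables and between tagged triangulations and clusters (Theorem in Section 2) let $f$ descend to an automorphism $\hat f$ of the simplicial complex of tagged arcs. The heart of the argument is then a rigidity theorem for the tagged arc complex: every simplicial automorphism of this complex is induced by a pair $(h,R)$, where $h$ is an orientation-preserving-or-reversing homeomorphism and $R$ is a subset of punctures, provided the surface is not one of the three exceptional cases excluded in the statement (the $4$-punctured sphere, the once-punctured $4$-gon, and the twice-punctured digon), where extra ``sporadic'' automorphisms destroy geometric realizability. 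This rigidity is exactly the content established in \cite{Gu11,BS15,BQiu15,BY18}, and combining it with the preceding two stages gives the isomorphism, with the orientation-preserving subgroup $\mathcal{MCG}_\bowtie(S,M)$ mapping onto $\text{Aut}^+\mathcal{A}(S,M)$ by tracking which elements preserve the sign of the exchange matrix as in Section 3.
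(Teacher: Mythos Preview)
The paper does not give its own proof of this theorem: it is quoted from the literature, introduced by ``According to \cite{Gu11,BS15,BQiu15,BY18}, we have the following theorem,'' and no argument follows the statement. So there is no in-paper proof to compare your proposal against.

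That said, your sketch is a faithful outline of how the cited references actually proceed, and it would be accepted as a summary of their strategy: the map $(h,R)\mapsto\psi_{h,R}$ is checked to be a homomorphism into the cluster automorphism group because homeomorphisms and tag changes send tagged triangulations to tagged triangulations compatibly with flips; injectivity comes from the fact that a mapping class fixing every isotopy class of (tagged) arcs is trivial; and surjectivity is the deep step, reducing a cluster automorphism to a simplicial automorphism of the tagged arc complex and then invoking the geometric rigidity of that complex proved in \cite{Gu11,BS15,BQiu15,BY18}, which fails precisely for the three excluded surfaces. One small refinement: your injectivity paragraph leans on arcs with boundary endpoints to force $R=\emptyset$, but for closed surfaces there are no such arcs, so the argument there has to use instead that changing tags at a puncture $p$ sends $x_\gamma$ to $x_{\gamma^p}\neq x_\gamma$ whenever $\gamma$ has exactly one end at $p$; and in the once-punctured closed case no tagged arc has exactly one end at the unique puncture, which is exactly why the $\mathbb Z_2^P$ factor collapses there, as you noted.
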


This theorem builds up a connection between mapping class groups and cluster automorphism groups for the surfaces satisfying the conditions (1)--(3).
Both the results in the above existing literature and that obtained below show that the kind of surfaces satisfying these three conditions is very important and useful.
For convenience, we give the definition for such surfaces as follows:
\begin{defi}\label{def-feasiblesurface}
	A marked surface $ (S,M) $ is called a \textbf{feasible surface} if $ (S,M) $ is different from any one of
	(1) the 4-punctured sphere, (2) the once-punctured 4-gon, and
	(3) the twice-punctured digon.
\end{defi}

\begin{figure}[htbp]
	
\begingroup%
  \makeatletter%
  \providecommand\color[2][]{%
    \errmessage{(Inkscape) Color is used for the text in Inkscape, but the package 'color.sty' is not loaded}%
    \renewcommand\color[2][]{}%
  }%
  \providecommand\transparent[1]{%
    \errmessage{(Inkscape) Transparency is used (non-zero) for the text in Inkscape, but the package 'transparent.sty' is not loaded}%
    \renewcommand\transparent[1]{}%
  }%
  \providecommand\rotatebox[2]{#2}%
  \newcommand*\fsize{\dimexpr\f@size pt\relax}%
  \newcommand*\lineheight[1]{\fontsize{\fsize}{#1\fsize}\selectfont}%
  \ifx\svgwidth\undefined%
    \setlength{\unitlength}{472.77397896bp}%
    \ifx\svgscale\undefined%
      \relax%
    \else%
      \setlength{\unitlength}{\unitlength * \real{\svgscale}}%
    \fi%
  \else%
    \setlength{\unitlength}{\svgwidth}%
  \fi%
  \global\let\svgwidth\undefined%
  \global\let\svgscale\undefined%
  \makeatother%
  \begin{picture}(1,0.19131763)%
    \lineheight{1}%
    \setlength\tabcolsep{0pt}%
    \put(0,0){\includegraphics[width=\textwidth,page=1]{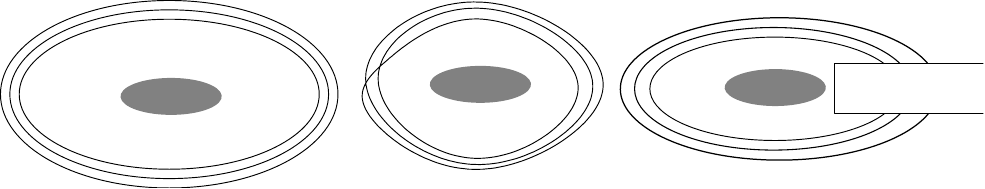}}%
    \put(0.8,0.09){\color[rgb]{0,0,0}\makebox(0,0)[lt]{\lineheight{1.25}\smash{\begin{tabular}[t]{l}$ \frac{1}{3!}\sum_{\sigma \in S_{3}}\sigma $ \end{tabular}}}}%
  \end{picture}%
\endgroup%

	\caption{A bangle $\text{Bang}_3 \zeta$(left), a bracelet $\text{Brac}_3 \zeta$(middle) and a band $ \text{Band}_{3} \zeta $ (right).}
	\label{f3}
\end{figure}

Next, we introduce some material about bases for cluster algebras from surfaces.

Suppose $ \zeta $ is an essential loop i.e., a loop in the interior of a surface $S$   which is neither a tagged arc nor contractible nor contractible onto a single puncture. The \textbf{bangle} $\text{Bang}_k \zeta$ is defined to be the union of $k$ copies of $\zeta$. The \textbf{bracelet} $\text{Brac}_k \zeta$ is defined to be the closed loop obtained by concatenating $\zeta$ exactly $k$ times. Suppose $ I \subset \zeta $ is a short interval, the \textbf{band} $\text{Band}_k \zeta$ is defined to be $k$ copies of $\zeta \backslash I $ with the ends connected by averaging all ways of pairing the endpoints on the two sides under the formal addition, see Figure \ref{f3}.

According to \cite{MSW13, Thu14}, essential loops, bangles, bracelets, and bands correspond to elements of the upper cluster algebra of $ \mathcal{A}(S, M) $, respectively.

The Chebyshev polynomials of the first kind are polynomials $ T_{k}(z) $ defined recursively as follows:
\[ T_{0}(z)=2, \, \, T_{1}(z)=z,\;\; \text{and }\;\; T_{n+1}(z)=zT_{n}(z)-T_{n-1}(z)\;\;\; \forall n\in \mathbb{N}, \]
and the Chebyshev polynomials of the second kind are polynomials $ U_{k}(z) $ defined recursively as follows:
\[ U_{0}(z)=1, \, \, U_{1}(z)=z, \;\; \text{and }\;\;
U_{n+1}(z)=zU_{n}(z)-U_{n-1}(z) \;\;\; \forall n\in \mathbb N. \]
According to \cite{MSW13,Thu14}, we have $ x_{\text{Bang}_{k}\zeta}= x_{\zeta}^{k}$, $ x_{\text{Brac}_{k}\zeta}= T_{k}(x_{\zeta})$ and $ x_{\text{Band}_{k}\zeta}= U_{k}(x_{\zeta}) $,
where $ x_{\zeta} $ is the element corresponding to the essential loop $\zeta $.

Suppose $\gamma, \gamma_1$ and $\gamma_2$ are generalized arcs or closed loops in $ (S,M) $ satisfying: \\
(1)\, $\gamma_1$ and $\gamma_2$ intersect at a point $x$, \\
(2)\, $\gamma$ is self-intersecting at a point $x$. \\
Then we let $C$ be the multicurve $\{\gamma_1,\gamma_2\}$ or $\{\gamma\}$ depending on which of the above two cases we suppose. We define the \textbf{smoothing} of $C$ at the point $x$ to be the pair of multicurves $C_+ = \{\alpha_1,\alpha_2\}$ (resp. $\{\alpha\}$) and $C_- = \{\beta_1,\beta_2\}$ (resp. $\{\beta\}$).

Here, the multicurve $C_+$ (resp. $C_-$) is the same as $C$ except for the local change via replacing the crossing {\Large $\times$} with the pair of segments {\LARGE $~_\cap^{\cup}$}
(resp. {\large $\supset \subset$}).

In a local region, we can choose an arc $\alpha$ for any essential loop $\zeta$ such that the smoothing of $\zeta$ and $\alpha$ is given as in Figure \ref{fig-smooth}.  Thus the corresponding element $x_{\zeta}$ can be represented as a Laurent polynomial of some cluster with positive coefficients.

\begin{figure}[htbp]
	\centering
\begingroup%
  \makeatletter%
  \providecommand\color[2][]{%
    \errmessage{(Inkscape) Color is used for the text in Inkscape, but the package 'color.sty' is not loaded}%
    \renewcommand\color[2][]{}%
  }%
  \providecommand\transparent[1]{%
    \errmessage{(Inkscape) Transparency is used (non-zero) for the text in Inkscape, but the package 'transparent.sty' is not loaded}%
    \renewcommand\transparent[1]{}%
  }%
  \providecommand\rotatebox[2]{#2}%
  \newcommand*\fsize{\dimexpr\f@size pt\relax}%
  \newcommand*\lineheight[1]{\fontsize{\fsize}{#1\fsize}\selectfont}%
  \ifx\svgwidth\undefined%
    \setlength{\unitlength}{767.70497239bp}%
    \ifx\svgscale\undefined%
      \relax%
    \else%
      \setlength{\unitlength}{\unitlength * \real{\svgscale}}%
    \fi%
  \else%
    \setlength{\unitlength}{\svgwidth}%
  \fi%
  \global\let\svgwidth\undefined%
  \global\let\svgscale\undefined%
  \makeatother%
  \begin{picture}(0.4,0.108)%
    \lineheight{1}%
    \setlength\tabcolsep{0pt}%
    \put(0,0){\includegraphics[width=0.4\unitlength,page=1]{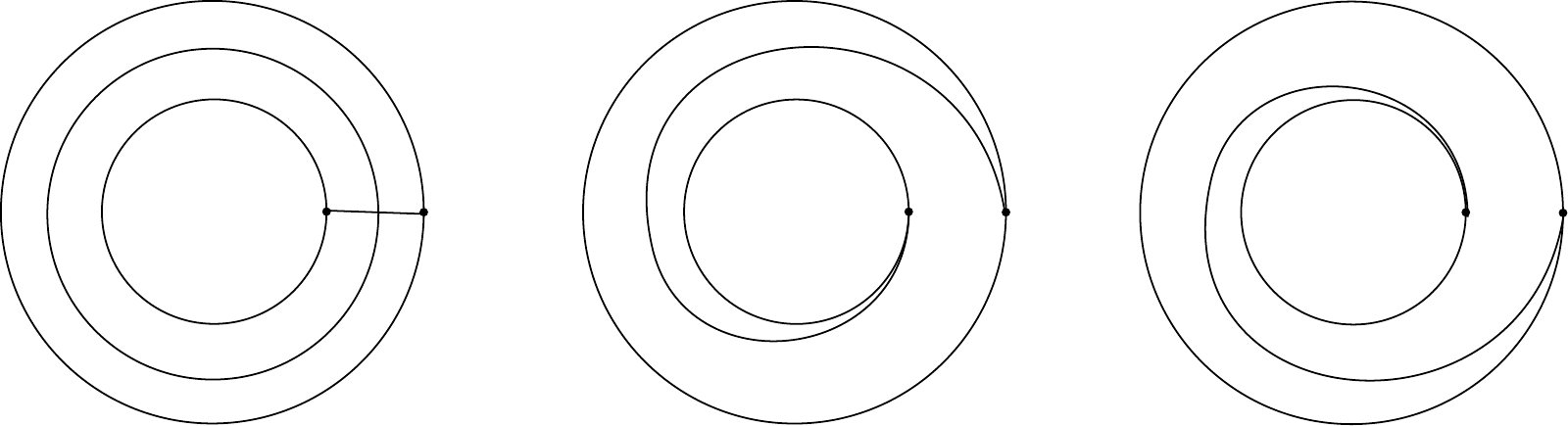}}%
    \put(0.098,0.045){\color[rgb]{0,0,0}\makebox(0,0)[lt]{\lineheight{1.25}\smash{\begin{tabular}[t]{l}$\alpha$\end{tabular}}}}%
    \put(0.0532,0.087){\color[rgb]{0,0,0}\makebox(0,0)[lt]{\lineheight{1.25}\smash{\begin{tabular}[t]{l}$\zeta$\end{tabular}}}}%
    \put(0.124,0.05){\color[rgb]{0,0,0}\makebox(0,0)[lt]{\lineheight{1.25}\smash{\begin{tabular}[t]{l}$=$\end{tabular}}}}%
    \put(0.27,0.05){\color[rgb]{0,0,0}\makebox(0,0)[lt]{\lineheight{1.25}\smash{\begin{tabular}[t]{l}$+$\end{tabular}}}}%
    \put(0.2,0.089){\color[rgb]{0,0,0}\makebox(0,0)[lt]{\lineheight{1.25}\smash{\begin{tabular}[t]{l}$\alpha_1$\end{tabular}}}}%
    \put(0.344,0.015){\color[rgb]{0,0,0}\makebox(0,0)[lt]{\lineheight{1.25}\smash{\begin{tabular}[t]{l}$\alpha_2$\end{tabular}}}}%
  \end{picture}%
\endgroup%

	\caption{The smoothing of $\zeta$ and $\alpha$}
	\label{fig-smooth}
\end{figure}

As known well, a {\bf multiset} means a gathering of some elements allowing the same elements included more tha once.

\begin{defi}
	(i) \, A finite multiset $ C^{\circ} $ of arcs and bangles is said to be \textbf{$ \mathcal{C}^{\circ} $-compatible} if no two elements of $ C^{\circ} $ intersect with each other and there is at most one bangle for each essential loop in $ C^{\circ} $.
	
	(ii) \, A finite multiset $ C $ of arcs and bracelet is said to be \textbf{$ \mathcal{C} $-compatible} if no two elements of $ C $ intersect with each other, except for the self-intersection of a bracelet, and there is at most one bracelet for each essential loop in $ C $.
	
	(iii) \, A finite multiset $ C' $ of arcs and band is said to be \textbf{$ \mathcal{C}' $-compatible} if no two elements of $ C' $ intersect with each other, except for the self-intersection of a band, and there is at most one band for each essential loop in $ C' $.
	
\end{defi}

Suppose $\mathcal{C}^{\circ}$ (resp. $ \mathcal{C} $ and $ \mathcal{C}' $) is the set consisting of all $\mathcal{C}^{\circ}$-compatible (resp. $\mathcal{C}$-compatible  and $\mathcal{C}'$-compatible) multisets.
For any surface $ (S, M) $ with at least two marked points and without punctures, suppose $ \mathcal{A}(S, M) $ is the cluster algebra from $ (S, M) $, then the {\bf Bangle basis} $ \mathcal{B}^{\circ} $, the {\bf Bracelet basis} $ \mathcal{B} $ and the {\bf Band basis} $ \mathcal{B}' $ of $ \mathcal{A}(S, M) $ are defined in \cite{MSW13} and \cite{Thu14} respectively as follows:
\begin{equation}\label{circ}
	\mathcal{B}^{\circ}= \{ \prod_{c\in C^{\circ}}x_{c} |\; C^{\circ} \in \mathcal{C}^{\circ} \};
\;\;\;
\mathcal{B}=\{ \prod_{c\in C}x_{c} |\; C \in  \mathcal{C} \}; \;\;\;
\mathcal{B}'= \{ \prod_{c\in C'}x_{c} |\; C'\in \mathcal{C}'  \}.
\end{equation}

Set the elements corresponding to essential loops as quasi cluster variables and the maximal sets of $ \mathcal{C}^{\circ} $-compatible (resp. $ \mathcal{C} $-compatible, $ \mathcal{C}' $-compatible) elements as elements of maximal compatible sets, then the set of maximal compatible sets $\mathcal{D}$ is stable for any  $f \in \text{Aut}\mathcal{A}(S,M)$.

Thus the Bangle basis $ \mathcal{B}^{\circ}$, the Bracelet basis $\mathcal{B} $ and the Band basis $\mathcal{B}' $ satisfying the Definition \ref{defbasis} and hence they are $\mathcal{D}$-stable bases for $ \mathcal{A}(S,M) $.

\subsection{Reverse Galois maps and the Galois inverse problem for cluster algebras from feasible surfaces} \label{surface-conj-subsection}   \quad

Now we prove Conjecture \ref{ssq} and \ref{srk} for cluster algebras from surfaces with certain conditions.

Recall that a tagged arc $ \gamma $ with the natural orientation is also denoted by $ \gamma $ and the tagged arc with the reverse orientation is denoted by $ \gamma^{-1} $.  Denote by $ \rho' $ the isomorphism from the cluster automorphism group to the mapping class group in Theorem \ref{mcg}. Define a map $\tau$ which assigns each mapping class to a representative in this mapping class. Let $ \rho= \tau \rho' $.

\begin{lem}\label{tagstable}
	Let $ (S,M) $ be a feasible surface, $ \mathcal{A}(S,M) $ its cluster algebra without coefficient, and $ H \le \text{\em Aut}\mathcal{A}(S,M) $. If $ x_{\gamma} \in \mathcal{A}(S,M)^{H} $ for some oriented tagged arc $ \gamma $ in $ (S,M) $ and $ \rho(f)(\gamma)=\gamma$ for any $ f\in H $,  then $ x_{\gamma^{p}} \in \mathcal{A}(S,M)^{H} $, where $ p $ is a puncture as well as an endpoint of $ \gamma $ and $ \gamma^{p} $ are the tagged arc obtained from $ \gamma $ by changed the tagging at $ p $.
\end{lem}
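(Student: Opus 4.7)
The plan is to use Theorem \ref{mcg} to write each $f \in H$ as $\psi_{h,R}$ under the identification $\text{\em Aut}\mathcal{A}(S,M) \cong \mathcal{MCG}^{\pm}_{\bowtie}(S,M)$, where $h = \rho(f)$ is the chosen homeomorphism representative and $R \subseteq P$ is a subset of punctures, so that $f$ acts on cluster variables by $f(x_\alpha) = x_{h(\alpha)^R}$ for each tagged arc $\alpha$. I will translate both hypotheses into combinatorial statements about tagged arcs, and then run the analogous calculation on $\gamma^p$.

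Fix $f \in H$. The hypothesis $\rho(f)(\gamma) = \gamma$ says $h(\gamma) = \gamma$ as oriented tagged arcs; orientation-preservation means that $h$ fixes each endpoint of $\gamma$ individually rather than swapping them, so in particular $h(p) = p$. Combining with $f(x_\gamma) = x_{h(\gamma)^R} = x_{\gamma^R} = x_\gamma$ and the bijection between tagged arcs and exchange variables, I obtain $\gamma^R = \gamma$, so $R$ contains no puncture endpoint of $\gamma$. Since $h(p) = p$, flipping the tagging at $p$ commutes with applying $h$, giving $h(\gamma^p) = (h(\gamma))^p = \gamma^p$. Because $\gamma^p$ has the same endpoints as $\gamma$ (only the tagging at $p$ differs), $R$ is still disjoint from its endpoints, hence $(\gamma^p)^R = \gamma^p$. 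Thus $f(x_{\gamma^p}) = x_{h(\gamma^p)^R} = x_{\gamma^p}$; since $f$ was arbitrary in $H$, $x_{\gamma^p} \in \mathcal{A}(S,M)^H$.

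The main obstacle to watch is the self-folded case where $\gamma$ is a loop at $p$ with both endpoint slots occupied by $p$: here $\gamma \mapsto \gamma^p$ changes the tagging at both slots simultaneously, and one must verify that $\gamma^R = \gamma$ still rules out $p \in R$ (which it does, since the simultaneous flip yields a distinct tagged arc, namely $\gamma^p \neq \gamma$) and that the commutation $h \circ (\cdot)^p = (\cdot)^p \circ h$ at the level of tagged arcs remains valid thanks to $h(p) = p$ and the compatibility of tagging data with homeomorphisms. In the degenerate case of a once-punctured closed surface (Theorem \ref{mcg}(a)), cluster variables are indexed by plain arcs and $\gamma^p$ does not correspond to any cluster variable, so the lemma carries content only in the remaining feasible surfaces addressed by Theorem \ref{mcg}(b).
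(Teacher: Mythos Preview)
Your proof is correct and follows essentially the same approach as the paper's: both use the decomposition $f = \psi_{h,R}$ from Theorem~\ref{mcg}, extract $h(p)=p$ from the orientation-preserving hypothesis $\rho(f)(\gamma)=\gamma$, and then observe that changing the tagging at $p$ commutes with the action of $f$ on the relevant arc. The paper packages this last step as the operator identity $f\psi_p = \psi_p f$ (restricted to $\gamma$) and computes $f(x_{\gamma^p}) = f(\psi_p(x_\gamma)) = \psi_p(f(x_\gamma)) = \psi_p(x_\gamma) = x_{\gamma^p}$, whereas you unpack the same computation by separately tracking the $h$-part ($h(\gamma^p)=\gamma^p$) and the $R$-part (deducing $p\notin R$ from $\gamma^R=\gamma$, hence $(\gamma^p)^R=\gamma^p$); your explicit handling of the loop case and the once-punctured closed surface is a helpful addition not spelled out in the paper.
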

\begin{proof}
	For each $ f \in H $, we also consider $ f $ as an element in the mapping class group.
	Since $ \rho(f)(\gamma)=\gamma$, the endpoints of $ \gamma $ is unchanged under $ f $. Let $ \psi_{p} \in \mathcal{A}(S,M)^{H} $ such that $ \psi_{p}(x_{\alpha})=x_{\alpha^{p}} $ for any tagged arc $ \alpha $. Then $ f $ and $ \psi_{p} $ commute with each other if we restrict them to $ \gamma $. Thus $ f(x_{\gamma^{p}})=f(\psi_{p}(x_{\gamma}))=\psi_{p}(f(x_{\gamma}))=x_{\gamma^{p}} $. Because of the arbitrariness of $ f $, we have $ x_{\gamma^{p}} \in \mathcal{A}(S,M)^{H} $.
\end{proof}

\begin{rmk}\label{extodirect}
	Suppose $ x $ is an exchange variable in $ \mathcal{A}(S,M)^{H} $, i.e.,  there is a mixing-type sub-seed $ \Sigma' $ in $ \mathcal{A}(S,M)^{H} $ of $ \Sigma $ such that $ x $ is exchange variable in $ \Sigma' $.
	In this case, for each variable $ y \in \Sigma $, if $ b_{xy}\neq 0 $, then $ y \in \Sigma' $.

   Suppose $ \gamma, \gamma_{1} $ correspond to $ x $ and $ y $ respectively, such that $ b_{xy}\neq 0 $. Since $ \rho(f)$ maps $ \gamma, \gamma_{1} $ to themselves for each $f \in H$, $ b_{f(x)f(y)}=b_{xy} $, thus $ f $ is a direct cluster automorphism. Hence $ H \le \text{\em Aut}^{+}\mathcal{A}(S,M) $.
	
	Besides, let $ \gamma $ be an oriented arc corresponding to an exchange variable $ x_{\gamma} $ in $ \mathcal{A}(S,M)^{H} $, then for any $ f \in H $, $ \rho(f) $ maps $ \gamma $ to itself preserving the orientation.
\end{rmk}

\begin{lem}\label{asymnosymlem}
	 Let $ \mathcal{A}(S,M) $ be a cluster algebra without coefficient from a feasible surface $(S,M)$, and for any $ f\in \text{\em Aut}^{+}\mathcal{A}(S,M) $, let $ x_{\gamma} $ belong to a mixing-type sub-seed $ \Sigma' $ in $\mathcal{A}(S,M)^{f} $ such that $ \rho(f)(\gamma)=\gamma^{-1} $. Then there is no oriented tagged arc $ \delta $ corresponding to cluster variables $ x_{\delta} \in \Sigma' $ such that $ \rho(f)(\delta)=\delta $.
	
\end{lem}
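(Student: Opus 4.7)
The plan is to argue by contradiction: suppose such an oriented tagged arc $\delta$ exists with $\rho(f)(\delta)=\delta$, and write $h=\rho(f)$, an orientation-preserving homeomorphism of $(S,M)$ representing $f$. First I would show that $\gamma$ and $\delta$ cannot share an endpoint. If $p$ is a common endpoint, then $h(p)=p$, and the orientation-preserving action of $h$ on the cyclic set of tangent rays at $p$ (to arcs emanating from $p$, together with the boundary rays when $p\in\partial S$) is, up to isotopy, a rotation of the tangent circle. Since $h(\delta)=\delta$ must fix the tangent ray(s) of $\delta$ at $p$, this rotation is forced to be the identity, so it fixes every tangent ray at $p$. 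But $h(\gamma)=\gamma^{-1}$ requires either $h(p)\ne p$, when $\gamma$ has distinct endpoints, or else a swap of the two tangent rays of $\gamma$ at $p$, when $\gamma$ is a loop at $p$; each contradicts the conclusion just obtained. When $p\in\partial S$, the same argument applies, with the simplification that the orientation-preserving and boundary-preserving hypotheses actually force $h$ to act as the identity on the tangent half-plane at $p$.

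Given the previous step, I may now assume $\gamma$ and $\delta$ are disjoint arcs with disjoint endpoint sets. Because $h$ reverses orientation on the embedded arc $\gamma$ and is orientation-preserving on $S$, there is a unique fixed point $m$ in the interior of $\gamma$ at which $h$ is locally a rotation by $\pi$; in particular $h$ swaps the two local sides of $\gamma$ in any tubular neighbourhood of $m$. I would then split into two cases according to whether or not $\gamma$ separates $S$. In the separating case $S\setminus\gamma=A\sqcup B$, and the side-swap at $m$ forces $h(A)=B$; since $\delta$ is connected and disjoint from $\gamma$ it lies entirely in one component, say $\delta\subset A$, so $h(\delta)\subset h(A)=B$ contradicts $h(\delta)=\delta\subset A$.

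The main obstacle is the non-separating case, where $S\setminus\gamma$ is connected and the local side-swap near $m$ no longer immediately contradicts the invariance of $\delta$. My plan here is to exploit the hypothesis $\mathcal{A}(\Sigma')\subseteq\mathcal{A}(S,M)^{f}$: every cluster variable in $\mathcal A(\Sigma')$ is fixed by $f$, and whenever $x_{\gamma}$ is an exchange variable Remark~\ref{extodirect} forces every arc sharing a triangle with $\gamma$ to lie in $\Sigma'$ as well, so $h$ must preserve setwise an entire neighbourhood of $\gamma$ in the arc complex. The rotation-by-$\pi$ behaviour at $m$ then propagates the sign $\rho(f)(\alpha)=\alpha^{-1}$ from $\gamma$ to its flip: the two ``third vertices'' of the two triangles on either side of $\gamma$ are swapped by $h$, so the new diagonal of the surrounding quadrilateral has its endpoints swapped and is again reversed by $h$. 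Iterating this propagation of the orientation-reversing sign along the sequence of flips connecting $\gamma$ to $\delta$ inside $\mathcal A(\Sigma')$, and using the feasibility hypothesis to rule out the exceptional surfaces listed in Definition~\ref{def-feasiblesurface} in which the propagation could terminate prematurely, one eventually reaches a cluster variable that is forced simultaneously to satisfy $h(\alpha)=\alpha^{-1}$ (from $\gamma$) and $h(\alpha)=\alpha$ (from $\delta$, via Step~1 applied to the shared endpoint that must appear along the flip sequence), producing the desired contradiction.
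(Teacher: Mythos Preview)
Your argument has a genuine gap in the non-separating case. You write that you will propagate the orientation-reversing sign ``along the sequence of flips connecting $\gamma$ to $\delta$ inside $\mathcal{A}(\Sigma')$'', but $\gamma$ and $\delta$ are \emph{compatible} arcs: both $x_\gamma$ and $x_\delta$ belong to the same sub-seed $\Sigma'$, hence to a common triangulation. There is no flip sequence from $\gamma$ to $\delta$; flips replace one arc by an incompatible one. Your propagation mechanism therefore has no path to follow, and the argument does not close. A secondary issue is your appeal to Remark~\ref{extodirect} to force neighbours of $\gamma$ into $\Sigma'$: that remark requires $x_\gamma$ to be an exchange variable in $\Sigma'$, whereas the hypothesis of the lemma only places $x_\gamma$ in $\Sigma'$, possibly as a frozen variable.

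The paper avoids all of this by exploiting directly that $\gamma$ and $\delta$ sit in a common triangulation $T$. One chooses a chain $\gamma=\alpha_0,\alpha_1,\dots,\alpha_s,\alpha_{s+1}=\delta$ of arcs of $T$ with $\alpha_i$ adjacent to $\alpha_{i+1}$ (sharing a triangle of $T$). Writing $\beta_i=\rho(f)(\alpha_i)$, adjacency is preserved, so one obtains a second chain from $\gamma$ to $\delta$ glued to the first along $\gamma$ and $\delta$. Since $f\in\text{Aut}^+\mathcal{A}(S,M)$ and $\rho(f)(\gamma)=\gamma^{-1}$, the map $\rho(f)$ acts on this local region as a $180^\circ$ rotation about the midpoint of $\gamma$; tracing the rotation along the chain forces $\rho(f)(\delta)=\delta^{-1}$, contradicting $\rho(f)(\delta)=\delta$ on an oriented surface. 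No separating/non-separating dichotomy, no common-endpoint analysis, and no flip propagation are needed.
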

\begin{proof}
	If $ \rho(f)(\gamma)=\gamma^{-1}$, then the taggings at two endpoints $ p, q $ of $ \gamma $ are the same and we also have $ \rho(f)(\gamma^{pq})=(\gamma^{pq})^{-1} $. Combining with Lemma \ref{tagstable}, This conclusion holds for tagged arcs corresponding to cluster variables in $ \Sigma' $ if and only if it holds for the tagged arcs obtained from these arcs corresponding to $ \Sigma' $ by changing the tagging at $ p,q $ at the same time or at other punctures. Thus we can assume tagged arcs corresponding to cluster variables in $ \Sigma' $ are contained in a triangulation $ T $ satisfying all punctures are tagged plain except for some punctures tagged in different ways in $ T $.
	
	Now we assume there is an oriented tagged arc $ \delta $ such that $ \rho(f)(\delta)=\delta $. In this case, $ \gamma $ and $ \delta $ do not tag notched at their endpoints, since $ \rho(f)(\gamma)=\gamma^{-1}$ and $ \rho(f)(\delta)=\delta $. We can do flip at each tagged arc that tags notched at its endpoint to obtain a triangulation whose tagged arcs tag plain. Thus we can assume all arcs in $ T $ tagging plain.

	Suppose $ \gamma $ is adjacent to $ \alpha_{1} $, $ \alpha_{i} $ is adjacent to $ \alpha_{i+1} $ for $ i \in [1, s-1] $ and $ \alpha_{s} $ is adjacent to $ \delta $ in $ T $, $ \rho(f)(\alpha_{i})=\beta_{i} $ for $ i\in [1,s] $. Then $ \gamma $ is adjacent to $ \beta_{1} $, $ \beta_{i} $ is adjacent to $ \beta_{i+1} $ for $ i \in [1, s-1] $ and $ \beta_{s} $ is adjacent to $ \delta $ (see Figure \ref{sqfig}).
	
	Since $ f \in \text{Aut}^{+}\mathcal{A}(S,M) $ and $ \rho(f)(\gamma)=\gamma^{-1} $, $ \rho(f) $ is the $ 180^{\circ} $ rotation of the local region in Figure \ref{sqfig}. Thus $\rho(f) $ reverses the orientation of $ \delta $ in Figure \ref{sqfig}, which is contrary to that the surface $ S $ is oriented.
\end{proof}

\begin{figure}[htbp]\label{sqfig}
	\centering
	\includegraphics[scale =0.25]{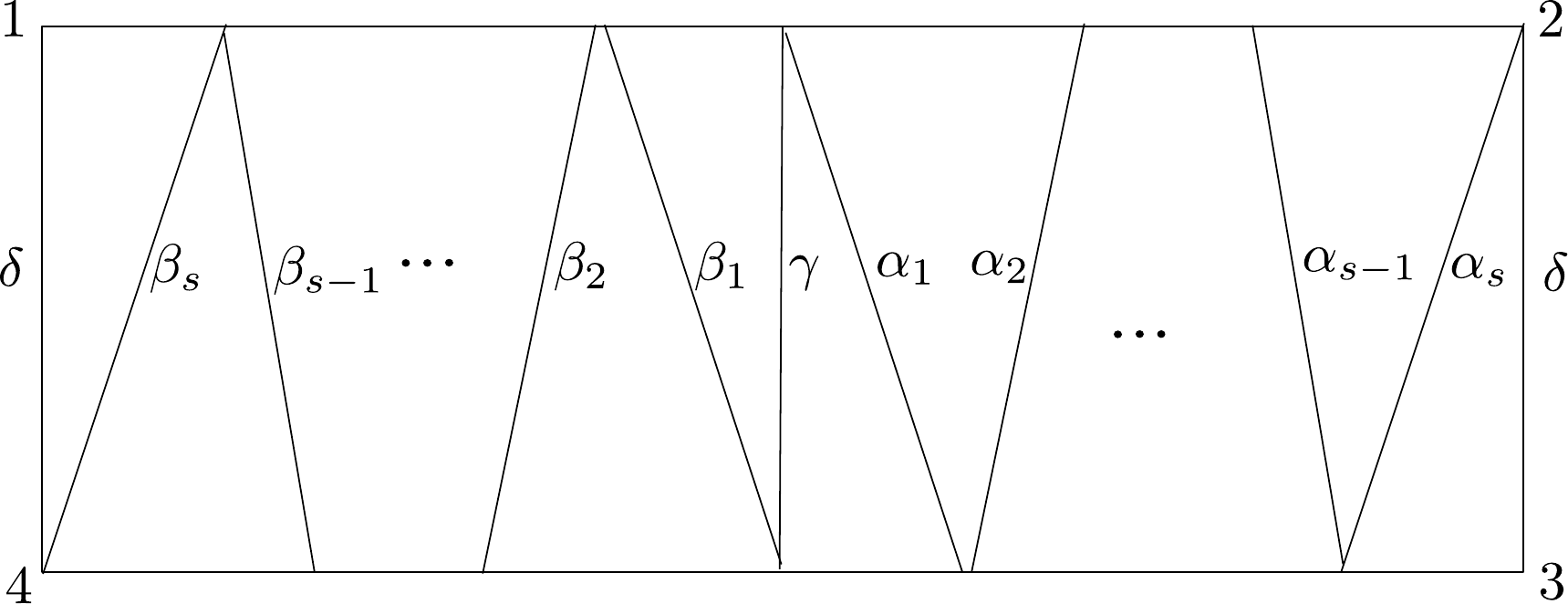}
	\caption{The local region from $ \gamma $ to $ \delta $}
\end{figure}

\begin{lem} \label{symnoasymlem}
	
		Let $ (S, M) $ be a feasible surface, $ \mathcal{A}(S, M) $ its cluster algebra without coefficient,
		and there is an oriented tagged arc $ \delta $ such that $ \rho(f)(\delta)=\delta $ for some $ f\in \text{\em Aut}^{+}\mathcal{A}(S, M) $.
		Then there is no oriented tagged arc $ \gamma $ such that $ \rho(f)(\gamma)=\gamma^{-1} $.
\end{lem}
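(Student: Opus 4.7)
The plan is to argue by contradiction, mirroring the strategy of Lemma \ref{asymnosymlem} but without its sub-seed restriction. Suppose, in addition to the given arc $\delta$ with $\rho(f)(\delta)=\delta$, that there exists an oriented tagged arc $\gamma$ with $\rho(f)(\gamma)=\gamma^{-1}$. By the tagging normalizations used at the start of the proof of Lemma \ref{asymnosymlem} (the equation $\rho(f)(\gamma)=\gamma^{-1}$ forces the two endpoints of $\gamma$ to carry the same tagging, and a simultaneous double tag-flip, combined with Lemma \ref{tagstable}, allows us to pass to the plain version), we may take both $\gamma$ and $\delta$ to be plain. I would then split according to the relationship between the underlying plain arcs $\bar\gamma$ and $\bar\delta$.

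If $\bar\gamma=\bar\delta$ as unoriented arcs, then the topological action of $\rho(f)$ on this single arc either preserves or reverses its orientation; the two alternatives respectively contradict $\rho(f)(\delta)=\delta$ (once one also checks the tagging part $R$ of $\rho(f)$) and $\rho(f)(\gamma)=\gamma^{-1}$. If $\bar\gamma\ne\bar\delta$ and they meet transversally at an interior point $x$, I would exploit that $\rho(f)$ is orientation-preserving and fixes the oriented arc $\delta$ to choose a representative of $\rho(f)$ which is the identity on a regular neighborhood of $\delta$. Then $x$ is fixed, the differential $d\rho(f)_x$ fixes the tangent line of $\delta$ pointwise, and, since $\rho(f)$ reverses the oriented arc $\gamma$ through $x$, it reverses the transverse tangent line of $\gamma$. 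The resulting linear map has determinant $-1$, contradicting $\rho(f)\in\text{Aut}^{+}\mathcal{A}(S,M)$.

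The main obstacle is the remaining case, where $\bar\gamma\ne\bar\delta$ are distinct but compatible, so that they lie together in a common ideal triangulation $T$ of $(S,M)$. Here the plan is to transplant the geometric argument of the final paragraph of Lemma \ref{asymnosymlem} from the sub-seed $\Sigma'$ to the whole triangulation $T$: first flip any notched arc of $T$ so that all arcs of $T$ are plain; then, using connectedness of the dual graph of $T$, form a chain $\gamma=\alpha_{0}, \alpha_{1},\ldots,\alpha_{s},\alpha_{s+1}=\delta$ of successively adjacent arcs in $T$; and finally, since $f$ is a direct cluster automorphism and $\rho(f)(\gamma)=\gamma^{-1}$, argue that a representative of $\rho(f)$ acts as a $180^{\circ}$ rotation on a strip neighborhood containing the entire chain, exactly as in Figure \ref{sqfig}. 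Propagating this rotation from $\gamma$ along the chain forces $\rho(f)$ to reverse the orientation of $\delta$, contradicting $\rho(f)(\delta)=\delta$ and the orientability of $S$. The delicacy is precisely this propagation step: without the sub-seed hypothesis used in Lemma \ref{asymnosymlem} one must justify that the local $180^{\circ}$ rotation near $\gamma$ transports consistently along any compatible chain of arcs in $T$, and this is where the feasibility hypothesis on $(S,M)$ enters, ruling out the small-complexity surfaces in which such transport could fail.
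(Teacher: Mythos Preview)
Your strategy is correct and your treatment of the intersecting case matches the paper's, but you are making the compatible case harder than it is. The paper does not re-run the chain argument there; it simply observes that when $\gamma$ and $\delta$ do not intersect, both $x_{\gamma}$ and $x_{\delta}$ are fixed by $f$ (each arc is mapped to itself setwise) and lie in a common cluster, so together they sit inside a mixing-type sub-seed $\Sigma'\subseteq\mathcal{A}(S,M)^{f}$. That is exactly the hypothesis of Lemma~\ref{asymnosymlem}, which then gives the contradiction in one line.

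Your worry about ``transplanting'' the propagation step and about the role of feasibility is therefore misplaced. If you look back at the proof of Lemma~\ref{asymnosymlem}, the chain $\alpha_{1},\dots,\alpha_{s}$ already lives in an arbitrary triangulation $T$ extending $\{\gamma,\delta\}$, not in $\Sigma'$; the sub-seed hypothesis is only used to put $\gamma$ and $\delta$ in a common triangulation and to know $\rho(f)$ fixes them setwise, both of which you have here for free. Feasibility enters only through Theorem~\ref{mcg} so that $\rho'$ (and hence $\rho$) is defined, not through any small-surface obstruction to the rotation argument. So your proof would work, but the clean route is: intersecting case by the local orientation argument, compatible case by direct appeal to Lemma~\ref{asymnosymlem}.
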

\begin{proof}
	Similarly, we can assume all arcs tagged plain.
	 Asume there is an oriented arc $ \gamma $ such that $ \rho(f)(\gamma)=\gamma^{-1} $.
	If $ \gamma $ intersects with $ \delta $, then $ \rho(f) $ reverses the orientation of local region contains $ \gamma $ and $ \delta $, which is contrary to $ f\in \text{Aut}^{+}\mathcal{A}(S,M) $. If there is no intersection between $ \gamma $ and $ \delta $, then there is a mixing-type sub-seed $ \Sigma' $ in $ \mathcal{A}(S, M)^{f} $ contains the cluster variables corresponding to $ \gamma $ and $ \delta $, which is contrary to Lemma \ref{asymnosymlem}.
	
\end{proof}

  $ f \in \text{Aut}^{+}\mathcal{A}(S,M)  $ is said to be \textbf{orientation-preserved} if  there is an oriented arc $ \gamma $ such that $ \rho(f)(\gamma)=\gamma $.

 A \textbf{subsurface} $C$ of a marked surface $(S, M)$ is a closed subset of $S$ which  is a two-dimensional submanifold and whose boundary components are a loop in $(S, M)$ which is neither contractible onto an arc, a boundary arc or a single puncture  nor contractible except the case that one of connected components of $C$ is the digon.
 Note that  here a loop $\zeta$ is defined to {\bf be contractible onto} $X\subset S$ (resp. {\bf contractible}) if  there is a isotopy from the identity of $(S,M)$ that transforms $\zeta$ into $X$ which fixes $X$ (resp. any point in $S \backslash (\partial S \cup M)$).

 We suppose that for a marked surface $(S,M)$, an open disk, open once-punctured disk or open annulus mentioned in the sequel, which is a connected component of $S\backslash C $, has no marked point on its boundary.

Suppose $ C $ is a subsurface of $ (S,M) $, we consider  maximal number of compatible tagged arcs of $ (S,M) $ which lie in $ (C,C\cap M) $.

	We call a set of compatible tagged arcs lying in $(C, C\cap M) $ which is maximal a \textbf{maximal tagged arcs set} of $(C, C\cap M) $.

\begin{lem} \label{subnarclem}
		Let $ (S, M) $ be a marked surface, $ C $ a connected subsurface of $ (S, M) $. Then the cardinalities $n$ of maximal tagged arcs sets of $ (S, M) $ are constant.
		
 Concretely, we can obtain the valuation of the cardinalities $\mathfrak{N}$ respectively in the following various cases: \\
		\indent (1) the cardinality $ \mathfrak{N}=0 $ in the case  $ (C, C\cap M)  $ is a subsurface with no marked point or the once-punctured disk; \\
		\indent (2) the cardinality $ \mathfrak{N}=2$ in the case  $ (C, C\cap M)  $ is the once-punctured disk with 1 marked point on the boundaries;  \\
		\indent (3) the cardinality $ \mathfrak{N}=n_{b}$ in the case  $ (C, C\cap M)  $ is the annulus or the digon; \\
		\indent (4) the cardinality $ \mathfrak{N}=6g+3a+3p+c-6-t-s-d+n_{b}$ in other cases, \\
		where $s$ is the number of open annuli, $d$  is the number of  open once-punctured disks, among connected components of $ S \backslash C $, $ t $ is the number of all boundary components with no marked point on their boundaries in $ (C, C\cap M)$ and $ n_{b} $ is the number of boundary arcs of $C$ which are tagged arc in $ (S,M) $.
	
\end{lem}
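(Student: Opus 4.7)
The plan is to first prove constancy of $\mathfrak{N}$ by decomposing into contributions from interior and boundary arcs, then verify the explicit formula case by case. Any maximal compatible set of tagged arcs of $(S,M)$ lying in $(C, C\cap M)$ decomposes into interior arcs (lying in the interior of $C$) and boundary arcs (lying along $\partial C \cap \text{int}(S)$). All $n_b$ such boundary arcs can be adjoined compatibly to any maximal collection of interior arcs, since a boundary arc of $C$ does not intersect interior arcs except at its endpoints; hence $\mathfrak{N}$ equals $n_b$ plus the maximum number of compatible interior arcs with endpoints in $C\cap M$.

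The special cases (1), (2), (3) are handled by direct geometric arguments. Case (1) is immediate: either there are no eligible arc endpoints, or the only putative loop at the unique puncture of the once-punctured disk with empty boundary marking would cut out a forbidden once-punctured monogon. Case (2) is a direct enumeration of the plain and notched radial arcs from the boundary marked point to the puncture, yielding exactly $2$ compatible tagged arcs. Case (3) follows by observing that in a digon or an annulus, every compatible configuration of interior tagged arcs is, when viewed in $(S,M)$, identified with a configuration of boundary arcs of $C$; the interior contributes nothing beyond the $n_b$ boundary arcs.

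For case (4), I plan to apply Proposition \ref{narc} to $(C, C\cap M)$ viewed as a marked surface, with two corrections. First, adding one auxiliary marked point to each of the $t$ empty boundary components of $C$ turns it into a valid marked surface with triangulation size $6g + 3a + 3p + (c+t) - 6$; the $t$ arcs incident to auxiliary marked points do not descend to tagged arcs of $(S,M)$ and must be removed, yielding $6g + 3a + 3p + c - 6$ genuine interior arcs. Second, each annulus (resp.\ once-punctured disk) component of $\overline{S\backslash C}$ creates exactly one redundancy among interior arcs of $C$, in the sense that two interior arcs of $C$ separated by such a complement component become isotopic as tagged arcs of $(S,M)$; these contribute the corrections $-s$ and $-d$ respectively. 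Adding the boundary contribution $+n_b$ produces the claimed formula $6g + 3a + 3p + c - 6 - t - s - d + n_b$.

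The main obstacle will be the rigorous justification of the $-s$ and $-d$ corrections, together with the corresponding collapse in case (3). This requires analyzing, for each annulus or once-punctured disk component of the complement, the precise identifications of interior arcs of $C$ when promoted to tagged arcs of $(S,M)$, by classifying essential loops around such a complement component and tracking how an arc crossing $\partial C$ may be isotoped through $\overline{S\backslash C}$. Once these corrections are verified, constancy of $\mathfrak{N}$ follows since every term of the final formula depends only on the topological data of the pair $(C, \overline{S\backslash C})$ together with the marking, and is independent of the choice of the maximal set.
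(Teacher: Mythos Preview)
Your overall strategy is close in spirit to the paper's, but the key step in case (4) contains an arithmetical and conceptual error that breaks the argument.

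You claim that after adding one auxiliary marked point to each of the $t$ empty boundary components of $C$, a triangulation has exactly $t$ arcs incident to the auxiliary points, and removing them leaves $6g+3a+3p+c-6$ genuine arcs. This is not correct: an auxiliary point $q$, being the \emph{only} marked point on its boundary component, is incident to at least two arcs in any triangulation (the two non-boundary sides of the triangle containing the boundary loop at $q$), and in general to many more --- the valence depends on the triangulation. Your subtraction of $t$ is therefore both unjustified and numerically wrong: your intermediate count $6g+3a+3p+c-6$ is off from the target by exactly $t$. The paper avoids auxiliary points altogether: for each empty boundary component $b$ it takes a loop $\gamma_b$ based at a genuine marked point of $M$ enclosing $b$, cuts out the region $U_{\gamma_b}$, and applies Proposition~\ref{narc} to the remaining honest marked surface. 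Tracking $g,a,p,c$ through this cut gives a $-2t$ shift, and adding back the $t$ arcs $\gamma_b$ (with a $-s$ correction when two $\gamma_b$'s coincide across an annulus component of $S\setminus C$) yields the correct $-t-s$.

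Your mechanism for the $-d$ correction is also not the right one. You say two interior arcs of $C$ ``become isotopic'' across a once-punctured disk component $D_p$ of $\overline{S\setminus C}$, but that is not what happens: the loop $\gamma_b$ around $\partial D_p$ becomes, in $(S,M)$, a loop cutting out a once-punctured monogon, hence is \emph{not} a tagged arc at all --- it is lost, not identified with another arc. The paper instead absorbs each $D_{p_i}$ into $C$, notes that the minimum number of tagged arcs in a triangulation meeting the new puncture $p_i$ is $2$, and obtains the maximal tagged arc sets of $C$ by removing those $2d$ arcs from maximal sets of $C\cup\bigcup_i D_{p_i}$; comparing the formula for the enlarged surface with that for $C$ produces the $-d$ term. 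Your isotopy picture works for the $-s$ correction (the two $\gamma_b$'s across an annulus component genuinely coincide in $(S,M)$), but not for $-d$.
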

\begin{proof}

	If $C$ is either a once-punctured disk or $C\cap M \neq \emptyset$, there is no tagged arc lying in $C$. We assume  $C$ is different from these cases in the following proof.
	
	Suppose there are $d$ open once-punctured disks $D_{p_{i}}, i=1, ..., d$ among connected components of $ S \backslash C $ and each puncture point $ p_{i}  $ is in $ D_{p_{i}} $. For these tagged arcs connected to two of $p_{i} (i=1, ..., d)$, we do flips at these tagged arcs such that a tagged arc connected to two of $p_{i} (i=1, ..., d)$ is replaced with a tagged arc connected to at most one of $p_{i} (i=1, ..., d)$ at each flip.

	Since the least number of tagged arcs connected to $p_{i}$ but connected to none of $\{p_{j},j\neq i\}$ in a triangulation of $(S,M)$  is $ 2 $, the maximal tagged arcs sets of $ (C, C\cap M) $ are exactly the sets obtained from the maximal tagged arcs set of $(C\cup  \bigcup_{i} D_{p_{i}},(C\cup \bigcup_{i} D_{p_{i}})\cap M ) $ with exactly $2d$ tagged arcs connected to one of $p_{i} (i=1, ..., d)$ by removing these $2d$ tagged arcs.

	In the following, we always assume that no connected component of $ S \backslash C $ is an open once-punctured disk if there is no additional explanation.
	
	In fact, $ C $ is an oriented surface with marked points $ C\cap M $, but some boundary components may contain no marked point. Any triangulation $ T $ of $ (S, M) $ with a maximal number of tagged arcs lying in $ C $ must contain a tagged arc $ \gamma_{b} $ with the same endpoint for each boundary component $ b $ with no marked point such that $ \gamma_{b} $ lies in $ C $ and cuts out an open region $ U_{\gamma_{b}} $ satisfying that $ C\backslash U_{\gamma_{b}}$ contains all tagged arcs in $ T $ lying in $ C $.

Note that $ \gamma_{b_{1}} $ and $ \gamma_{b_{2}} $ may be the same for different boundary components $ b_{1} $, $ b_{2} $ (up to isotopy). In this case, $ C $ is a once-punctured cylinder or a connected component of $ S \backslash C $ is an open annulus.
	
	Thus removing these $ U_{\gamma_{b}} $, $ (C\backslash \bigcup_{b} U_{\gamma_{b}},(C\backslash \bigcup_{b} U_{\gamma_{b}}) \cap M) $ is a marked surface, where the union is over all boundary components with no marked point. And these tagged arcs lying in $ C $ still lie in $ (C\backslash \bigcup_{b} U_{\gamma_{b}},(C\backslash \bigcup_{b} U_{\gamma_{b}}) \cap M) $.
	
	Suppose for $ (C, C\cap M) $, the genus is $ g $, the number of boundary components is $ a $, the number of punctures is $ p $ and the number of marked points on the boundaries  is $ c $. Then for $ U_{\gamma_{b}} $,
	if the endpoint of $ \gamma_{b} $ is a puncture, then the genus and number of boundary components, punctures, marked points on the boundaries in $ (C\backslash U_{\gamma_{b}}, (C\backslash U_{\gamma_{b}})\cap M ) $ are $ g $, $ a-1+1 $, $ p-1 $ and $ c+1 $, respectively; if the endpoint of $ \gamma_{b} $ is a marked point on the boundaries, then the genus and number of boundary components, punctures, marked points in $ (C\backslash U_{\gamma_{b}}, (C\backslash U_{\gamma_{b}})\cap M ) $ are $ g $, $ a-1 $, $ p $ and $ c+1 $, respectively.
	
	Let $ n_{b} $ be the number of boundary arcs of $C$ which are in $ T $ and there are $s$ open annuli among connected components of $ S \backslash C $.
	
Note that if $(C\backslash \bigcup_{b} U_{\gamma_{b}},(C\backslash \bigcup_{b} U_{\gamma_{b}}) \cap M)$ is a disk with 1, 2 or 3 marked points on the boundaries, or the once-punctured disk with 1 marked point on the boundaries, $(C\backslash \bigcup_{b} U_{\gamma_{b}},(C\backslash \bigcup_{b} U_{\gamma_{b}}) \cap M)$ does not admit a triangulation.

Now we give a classification of $C$ as follows.

In the case $(C\backslash \bigcup_{b} U_{\gamma_{b}},(C\backslash \bigcup_{b} U_{\gamma_{b}}) \cap M)$ is a disk with $2$ marked points on the boundaries, $ C $ is (I) either (i) the digon, or (ii) the annulus with 1 marked point on the boundaries or (iii) the once-punctured annulus.

 In the case $(C\backslash \bigcup_{b} U_{\gamma_{b}},(C\backslash \bigcup_{b} U_{\gamma_{b}}) \cap M)$ is a disk with $3$ marked points on the boundaries, $ C $ is (II) either (i) the triangle, or (ii) the annulus with 2 marked points on the same boundary component or (iii) a pair of pants with 1 puncture.

In the case $(C\backslash \bigcup_{b} U_{\gamma_{b}},(C\backslash \bigcup_{b} U_{\gamma_{b}}) \cap M)$ is the once-punctured disk with 1 marked point on the boundaries, $ C $ is (III) either (i) the once-punctured disk with 1 marked point on the boundaries or (ii) the twice-punctured disk.

 Let $\mathfrak N$ denote a cardinality of a maximal tagged arcs set. Then respectively,\\
 $\mathfrak N=n_{b}$ for $C$ in the cases (I)(i), (I)(ii) and (II)(i); \\
  $\mathfrak N=1-d$ for $C$ in the cases (I)(iii) and (II)(ii); \\
  $\mathfrak N=3-d-s$ for $C$ in the case (II)(iii);\\
   $\frak N=2$ for $C$ in the case (III)(i)-(ii),\\
   where $d$ is the number of open once-punctured disks among connected components of $ S \backslash C $.
	
	According to Proposition \ref{narc}, the number of tagged arcs in $ T $ lying in $ (C\backslash \bigcup_{b} U_{\gamma_{b}},(C\backslash \bigcup_{b} U_{\gamma_{b}}) \cap M) $ except those $ \gamma_{b} $ is equal to $ n'=6g+3a+3p+c-6-2t+n_{b} $, where $ t $ is the number of all boundary components with no marked point in $ (C, C\cap M)$.
	
Note that $\gamma_{b}$ is counted twice for each pair of boundary components of an open annulus among connected components of $ S \backslash C $.
Thus the cardinality of a maximal tagged arcs set is equal to $$ \mathfrak{N} =n'+t-s=6g+3a+3p+c-6-2t+n_{b}+t-s=6g+3a+3p+c-6-t-s+n_{b}. $$ 
	
	Suppose $(C, C\cap M)$ is different from a connected subsurface in the case (I)-(IV) and there are $d$ open once-punctured disks among connected components of $ S \backslash C $, the cardinality of a maximal tagged arcs set of $ (C, C\cap M) $ is equal to $ \mathfrak{N}=6g+3a+3p+c-6-t-s-d+n_{b}  $.

Note that   this formula is also valid for those $C$ in the cases (I)(iii), (II)(i)-(iii) and (III)(ii).
	
\end{proof}

\begin{lem}\label{addarc-lem}
	Let $ (S, M) $ be a marked surface, $ C_{1}, C_{2} $ subsurfaces of $ (S, M) $. If the set of tagged arcs lying in $ C_{1} $ is a proper subset of that of $ C_{2} $ and boundary components of $C_{1}$ which contain exactly one marked point are contained in $\partial S$ and no connected component of $S\backslash C_{1}$ is a triangle with two edges in $\partial S$, then the cardinality of a maximal tagged arcs set of $ (C_{1}, C_{1}\cap M) $ is less than that of $ (C_{2}, C_{2}\cap M) $.

\end{lem}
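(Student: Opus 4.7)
The plan is to apply Lemma \ref{subnarclem} to each of $(C_{1},C_{1}\cap M)$ and $(C_{2},C_{2}\cap M)$ and compare the resulting closed-form expressions for $\mathfrak{N}$. Since the tagged arcs lying in $C_{1}$ form a proper subset of those lying in $C_{2}$, one can realize $C_{2}$ from $C_{1}$ by finitely many \emph{elementary enlargements}, where each such move absorbs either a single connected component $U$ of the current complement in $S$ or a proper subsurface of such a $U$. By induction on the number of elementary enlargements it suffices to verify $\mathfrak{N}(C_{2})>\mathfrak{N}(C_{1})$ for one elementary enlargement; it also suffices, after a further bookkeeping reduction, to assume that $C_{1}$ and $C_{2}$ fall into the ``generic'' fourth case of Lemma \ref{subnarclem}, since the degenerate cases (no marked point, the once-punctured disk, the annulus, the digon) are handled directly from their separate formulas.

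For one elementary enlargement I would track how the invariants $(g,a,p,c,t,s,d,n_{b})$ of Lemma \ref{subnarclem} change, splitting by the topological type of the absorbed region $U$: an open disk with marked points on its boundary arc in $\partial C_{1}$, an open once-punctured disk, an open annulus, or a more complicated open surface (possibly with higher genus, more boundary components, or more punctures). In every non-excluded case, the marked surface $(\overline{U},\overline{U}\cap M')$ obtained by taking the closure and promoting the common boundary with $C_{1}$ to part of $\partial \overline{U}$ is itself a non-degenerate marked surface. Proposition \ref{narc} then supplies a strictly positive count of new pairwise compatible tagged arcs in $(\overline{U},\overline{U}\cap M')$, which, after accounting for any former boundary arc of $C_{1}$ that becomes an interior tagged arc of $C_{2}$, translates into $\mathfrak{N}(C_{2})-\mathfrak{N}(C_{1})\geq 1$ via the formula $\mathfrak{N}=6g+3a+3p+c-6-t-s-d+n_{b}$. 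The two hypotheses of the lemma are imposed exactly to rule out the two degenerate enlargements in which strict monotonicity would fail: absorbing a triangle $U$ with two edges in $\partial S$ creates no new compatible tagged arc (its third edge is already a tagged arc of $C_{1}$), and if a boundary component of $C_{1}$ carrying a unique marked point sat in the interior of $S$, then the corresponding $U$ would be a once-punctured disk whose absorption converts a boundary arc to an interior arc without producing any new tagged arc.

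The step I expect to be the main obstacle is the simultaneous bookkeeping of $n_{b}$, $t$, $s$, and $d$ under a single enlargement, since one move can change several of these at once: a boundary component of $C_{1}$ with no marked point may vanish, a former boundary arc of $C_{1}$ may become interior, and complementary annuli or once-punctured disks of $C_{2}$ may be merged, created, or lost relative to $C_{1}$. My plan is to compute the combined change $6\Delta g+3\Delta a+3\Delta p+\Delta c-\Delta t-\Delta s-\Delta d+\Delta n_{b}$ once for each topological type of $U$, and then identify it with the arc count of $(\overline{U},\overline{U}\cap M')$ coming from Proposition \ref{narc}, which is strictly positive outside the excluded cases. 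Chaining the elementary enlargements then yields the strict inequality stated in the lemma.
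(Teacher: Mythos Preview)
Your overall strategy—applying Lemma \ref{subnarclem} and isolating the configurations in which $\mathfrak{N}$ fails to increase—is the same as the paper's, but two points in the execution do not work as written.

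First, the induction on elementary enlargements is set up incorrectly. You reduce to proving the strict inequality $\mathfrak{N}(C_{2})>\mathfrak{N}(C_{1})$ for a \emph{single} enlargement, but the hypotheses of the lemma concern $C_{1}$ only, not the intermediate surfaces in your chain. After one enlargement the new surface $C_{1}'$ may well acquire a boundary component with a single marked point lying in the interior of $S$, or a triangular complementary region with two edges in $\partial S$, so the strict inequality can fail at subsequent steps; your inductive hypothesis then does not apply. The paper avoids this by comparing $C_{1}$ and $C_{2}$ directly (after reducing to the connected case) rather than inducting through a chain. If you want to keep the chain, you must instead argue that every step gives $\mathfrak{N}\ge 0$ and that at least one step is strict—and this last point requires knowing that the degenerate enlargements add no new tagged arcs, which you have not established.

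Second, you misidentify one of the two equality cases. The hypothesis that boundary components of $C_{1}$ with exactly one marked point lie in $\partial S$ is not there to exclude the absorption of a once-punctured disk; it excludes the paper's case (1), in which $C_{2}$ is obtained from $C_{1}$ by gluing an \emph{annulus} with one marked point along such a boundary component. Your description of this situation as a once-punctured disk ``converting a boundary arc to an interior arc'' does not match the geometry, and your proposed bookkeeping would track the wrong parameters (you would be adjusting $d$ when the actual change is in $(c,t)$ and the boundary structure). Getting this case right is exactly where the hypothesis enters, so the misidentification is not cosmetic.
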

\begin{proof}
	According to the condition, tagged arcs lying in a connected component of $ C_{1} $ also lies in a connected component of $ C_{2} $, and there are two connected components $C_{1}', C_{2}',$ of $ C_{1} $ and $C_{2}$, respectively, such that the set of tagged arcs lying in $ C_{1}' $ is a proper subset of that of $ C_{2}' $. Thus we can assume that $C_{1}, C_{2}$ is connected.

	Denote by $\mathfrak{N}_{1}, \mathfrak{N}_{2}  $ the  cardinalities of a maximal tagged arcs set of $ (C_{1}, C_{1}\cap M) $ and $ (C_{2}, C_{2}\cap M) $, respectively.
	If $C_{1}$ satisfies  one of the cases  (1)-(3) in Lemma \ref{subnarclem}, it is easy to check that this lemma holds. In the following, we always assume that $C_{1}$ is different from that in cases (1)-(3).
	
	Since  the set of tagged arcs lying in $ C_{1} $ is a proper subset of that of $ C_{2} $, at least  one of  genuses, numbers of boundary components, punctures and marked points on the boundaries of $(C_{1}, C_{1}\cap M)$ and $(C_{2}, C_{2}\cap M)$
	 must be different each other.
	Denote by $g_{i}, B_{i}, P_{i}, A_{i}$ the genus, sets of boundary components, punctures and marked points on the boundaries of $(C_{i}, C_{i}\cap M)$, respectively for $i=1, 2$.	
	In fact, we have $g_{1} \le g_{2}$, $P_{1} \subset P_{2}$,  otherwise we can find a proper tagged arc lies in $C_{1}$ but does not lie in $C_{2}$.
	
	Since $C_{1}$ is different from $C_{2}$, there must be $B_{1} \backslash B_{2} \neq \emptyset $.
	In these cases, some boundary component $b$ in $C_{1}$ vanishes via gluing some subset of the connected component of $S \backslash C_{1}$ bounded by boundaries with non-empty intersection with $b$ and some marked points in $ A_{1}$ become punctures in $P_{2}$.
	According to Lemma \ref{subnarclem}, $
	\mathfrak{N}_{2}$ is greater than $
	\mathfrak{N}_{1}$ except the following cases:
	
(1) $C_{2}$ is obtained from $C_{1}$ by gluing an annulus with one marked point on the boundaries along the boundary component with one marked point for some boundary component $b$ of $C_1$  with one marked point,  see Figure \ref{fig-case1}.

	(2) $C_{2}$ is obtained from $C_{1}$ by gluing some triangle whose two edges are boundary arcs along another edge, see Figure \ref{fig-case1}.
	
	Note that in cases (1) and (2) due to Lemma \ref{subnarclem},  $
		\mathfrak{N}_{1}=
		\mathfrak{N}_{2}$. 
		
	 Since boundary components of $C_{1}$ which contain exactly one marked point are contained in $\partial S$ and no connected component of $S\backslash C_{1}$ is a triangle with two edges in $\partial S$, $n_{2}$ is always greater than $n_{1}$.
	
\end{proof}

\begin{figure}[htbp]
	\centering
	\begin{minipage}[t]{0.5\linewidth}
		\centering
\begingroup%
  \makeatletter%
  \providecommand\color[2][]{%
    \errmessage{(Inkscape) Color is used for the text in Inkscape, but the package 'color.sty' is not loaded}%
    \renewcommand\color[2][]{}%
  }%
  \providecommand\transparent[1]{%
    \errmessage{(Inkscape) Transparency is used (non-zero) for the text in Inkscape, but the package 'transparent.sty' is not loaded}%
    \renewcommand\transparent[1]{}%
  }%
  \providecommand\rotatebox[2]{#2}%
  \newcommand*\fsize{\dimexpr\f@size pt\relax}%
  \newcommand*\lineheight[1]{\fontsize{\fsize}{#1\fsize}\selectfont}%
  \ifx\svgwidth\undefined%
    \setlength{\unitlength}{219.18875146bp}%
    \ifx\svgscale\undefined%
      \relax%
    \else%
      \setlength{\unitlength}{\unitlength * \real{\svgscale}}%
    \fi%
  \else%
    \setlength{\unitlength}{\svgwidth}%
  \fi%
  \global\let\svgwidth\undefined%
  \global\let\svgscale\undefined%
  \makeatother%
  \begin{picture}(1,0.37663651)%
    \lineheight{1}%
    \setlength\tabcolsep{0pt}%
    \put(0,0){\includegraphics[width=\unitlength,page=1]{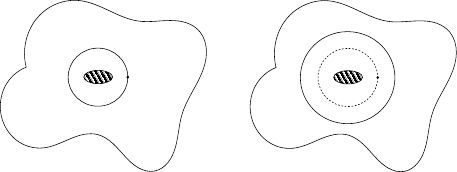}}%
    \put(0.47036795,0.18017877){\color[rgb]{0,0,0}\makebox(0,0)[lt]{\lineheight{1.25}\smash{\begin{tabular}[t]{l}$\Longrightarrow$\end{tabular}}}}%
    \put(0.16423992,0.22774845){\color[rgb]{0,0,0}\makebox(0,0)[lt]{\lineheight{1.25}\smash{\begin{tabular}[t]{l}$C_1$\end{tabular}}}}%
    \put(0.70151807,0.24706621){\color[rgb]{0,0,0}\makebox(0,0)[lt]{\lineheight{1.25}\smash{\begin{tabular}[t]{l}$C_2$\end{tabular}}}}%
  \end{picture}%
\endgroup%

	\caption{Case (1)}
	\label{fig-case1}
	\end{minipage}%
	\begin{minipage}[t]{0.5\linewidth}
		\centering
\begingroup%
  \makeatletter%
  \providecommand\color[2][]{%
    \errmessage{(Inkscape) Color is used for the text in Inkscape, but the package 'color.sty' is not loaded}%
    \renewcommand\color[2][]{}%
  }%
  \providecommand\transparent[1]{%
    \errmessage{(Inkscape) Transparency is used (non-zero) for the text in Inkscape, but the package 'transparent.sty' is not loaded}%
    \renewcommand\transparent[1]{}%
  }%
  \providecommand\rotatebox[2]{#2}%
  \newcommand*\fsize{\dimexpr\f@size pt\relax}%
  \newcommand*\lineheight[1]{\fontsize{\fsize}{#1\fsize}\selectfont}%
  \ifx\svgwidth\undefined%
    \setlength{\unitlength}{74.22804465bp}%
    \ifx\svgscale\undefined%
      \relax%
    \else%
      \setlength{\unitlength}{\unitlength * \real{\svgscale}}%
    \fi%
  \else%
    \setlength{\unitlength}{\svgwidth}%
  \fi%
  \global\let\svgwidth\undefined%
  \global\let\svgscale\undefined%
  \makeatother%
  \begin{picture}(1,0.98684268)%
    \lineheight{1}%
    \setlength\tabcolsep{0pt}%
    \put(0.47435467,0.46945141){\color[rgb]{0,0,0}\makebox(0,0)[lt]{\lineheight{1.25}\smash{\begin{tabular}[t]{l}$\Longrightarrow$\end{tabular}}}}%
    \put(0.11406921,0.61036793){\color[rgb]{0,0,0}\makebox(0,0)[lt]{\lineheight{1.25}\smash{\begin{tabular}[t]{l}$C_1$\end{tabular}}}}%
    \put(0,0){\includegraphics[width=\unitlength,page=1]{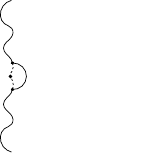}}%
    \put(0.86656174,0.60901837){\color[rgb]{0,0,0}\makebox(0,0)[lt]{\lineheight{1.25}\smash{\begin{tabular}[t]{l}$C_2$\end{tabular}}}}%
    \put(0,0){\includegraphics[width=\unitlength,page=2]{case2.pdf}}%
  \end{picture}%
\endgroup%

	\caption{Case (2)}
	\label{fig-case2}
	\end{minipage}%
	\centering
\end{figure}

\begin{thm}\label{c1}
	Let $ (S, M) $ be a feasible surface, $ \mathcal{A}(S, M) $ its cluster algebra without coefficient, and a subgroup $ H \le \text{\em Aut}\mathcal{A}(S, M) $. If $ \mathcal{A}(\Sigma_{i}), i=1, 2 $ are maximal in $ \mathcal{A}(S,M)^{H} $ as cluster subalgebras of $ \mathcal{A}(S,M) $, then the ranks of $ \mathcal{A}(\Sigma_{i}), i=1, 2 $ are the same.
\end{thm}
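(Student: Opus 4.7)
The plan is to translate the statement into a problem about subsurfaces of $(S,M)$ and then invoke the topological count in Lemma \ref{subnarclem}. To each $\mathcal{A}(\Sigma_i)\in\mathcal{M}_{sub}^H$ I will associate a subsurface $C_i\subset S$ whose maximal tagged arcs set realizes the extended cluster of $\Sigma_i$, and argue that $C_1$ and $C_2$ share the topological invariants appearing in Lemma \ref{subnarclem}, hence have the same rank.

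First I identify the tagged arcs underlying $\Sigma_i$. Every variable in the extended cluster of $\Sigma_i$ corresponds to a tagged arc fixed by $\rho(H)$ up to orientation, because $\Sigma_i \subset \mathcal{A}(S,M)^H$. By Remark \ref{extodirect} together with Lemmas \ref{asymnosymlem} and \ref{symnoasymlem}, each exchange variable $x_\gamma$ of $\Sigma_i$ corresponds to an oriented arc $\gamma$ with $\rho(f)(\gamma)=\gamma$ for every $f\in H$, and in particular $H\le \text{Aut}^{+}\mathcal{A}(S,M)$ whenever $\Sigma_i$ contains exchange variables. Writing $\Gamma_i^{ex}$ and $\Gamma_i^{fr}$ for the tagged arc sets corresponding to the exchange and frozen variables of $\Sigma_i$ respectively, the union $\Gamma_i^{ex}\cup\Gamma_i^{fr}$ together with $\partial S$ cuts $S$ into pieces, and I define $C_i$ to be the union of those pieces containing arcs of $\Gamma_i^{ex}$. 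By construction $\Gamma_i^{ex}\cup\Gamma_i^{fr}$ forms a maximal tagged arcs set of $(C_i,C_i\cap M)$ in the sense of Lemma \ref{subnarclem}, and the maximality of $\Sigma_i$ in $\mathcal{A}(S,M)^H$ ensures that no further $\rho(H)$-fixed tagged arc compatible with $\Sigma_i$ exists.

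The main step is to show that the topological invariants of $C_1$ and $C_2$ appearing in Lemma \ref{subnarclem} agree. The key observation is that both $C_i$'s are determined up to isotopy by the $H$-action on $(S,M)$, not by the particular choice of $\Sigma_i$: any $\rho(H)$-fixed tagged arc lying outside $C_1$ and compatible with $\Sigma_1$ would, by maximality, already have been added to $\Sigma_1$, forcing $C_1$ to capture precisely the $H$-fixed subregion of $(S,M)$, and similarly for $C_2$. The main obstacle is to carefully handle the subtleties near punctures with tagging ambiguity and self-folded triangles, and to verify that the counts of open annuli, open once-punctured disks, boundary components without marked points, and frozen boundary arcs (the quantities $s$, $d$, $t$, $n_b$ appearing in Lemma \ref{subnarclem}) coincide for $C_1$ and $C_2$. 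Once this is established, Lemma \ref{subnarclem} gives $|\Gamma_1^{ex}|=|\Gamma_2^{ex}|$, i.e., the ranks of $\mathcal{A}(\Sigma_1)$ and $\mathcal{A}(\Sigma_2)$ are equal.
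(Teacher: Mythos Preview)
Your overall strategy---build a subsurface from $\Sigma_i$ and invoke the arc count of Lemma \ref{subnarclem}---is exactly the route the paper takes, but your argument has a genuine gap at the step where you claim that $C_1$ ``captures precisely the $H$-fixed subregion of $(S,M)$''.

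Your justification is: any $\rho(H)$-fixed tagged arc lying outside $C_1$ and \emph{compatible with $\Sigma_1$} would already have been absorbed into $\Sigma_1$ by maximality. That is true, but it only rules out $H$-fixed arcs compatible with $\Sigma_1$. Nothing prevents an $H$-fixed arc $\eta$ that \emph{crosses} some arc of $\Sigma_1$ from lying outside $C_1$; maximality says nothing about such $\eta$, since you cannot simply add it to $\Sigma_1$. So from your argument alone the subsurfaces $C_1$ and $C_2$ might genuinely differ, and you have not shown their invariants match.

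The paper closes this gap by a different manoeuvre. Rather than comparing two subsurfaces, it builds from $\Sigma_1$ a single subsurface $U$ (carefully assembled from digons, self-folded triangles and triangles, then completed to an honest subsurface by gluing annuli along one-marked-point boundary components) and proves directly that \emph{every} $H$-fixed tagged arc lies in $U$. The mechanism is a contradiction: if some $H$-fixed $\eta$ lay outside $U$, one enlarges $U$ to an $H$-invariant subsurface $U'\supsetneq U$ containing $\eta$, and Lemma \ref{addarc-lem} (not Lemma \ref{subnarclem} alone) gives a strictly larger maximal tagged arcs set in $U'$. Crucially, because $U'$ is $H$-invariant, one can then find a maximal tagged arcs set of $U'$ \emph{containing the arcs of $\Sigma_1$}, yielding a mixing-type sub-seed strictly larger than $\Sigma_1$ inside $\mathcal{A}(S,M)^H$---contradicting maximality. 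This step is what lets maximality of $\Sigma_1$ control arcs that are not compatible with $\Sigma_1$. Once every $H$-fixed arc lies in $U$, the arcs of $\Sigma_2$ lie in $U$, so $\operatorname{rank}\mathcal{A}(\Sigma_2)\le\operatorname{rank}\mathcal{A}(\Sigma_1)$; symmetry finishes.

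Your proposal also skips the construction details needed to make $C_i$ a subsurface in the paper's sense (non-contractible boundary loops), which is why the paper's annulus-gluing step and the case analysis for self-folded triangles are not merely bookkeeping: without them Lemma \ref{subnarclem} and Lemma \ref{addarc-lem} do not apply.
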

\begin{proof}
	If there is no exchange variable in mixing-type sub-seed in $ \mathcal{A}(S, M)^{H} $, then the ranks of $ \mathcal{A}(\Sigma_{i}), i=1, 2$ are  equal to zero.
	
	In the next proof, we assume  there is an exchange variable in mixing-type sub-seed in $ \mathcal{A}(S, M)^{H} $. Thus according to Remark \ref{extodirect}, $ H \le \text{Aut}^{+}\mathcal{A}(S, M) $.
	
	Suppose the oriented tagged arcs whose orientation are preserved under $ \rho(H) $ and correspond to cluster variables in $ \Sigma_{1} $ belong to a triangulation $ T $ of $ (S, M) $. According to Lemma \ref{tagstable}, we can assume all punctures are tagged plain except for some punctures tagged in different ways in $ T $.
	
	Due to Remark \ref{extodirect}, all oriented tagged arcs corresponding to exchange variables in $ \Sigma_{1} $ are preserved under $ \rho(H) $ and hence elements in $H$ is orientation-preserved.  According to Lemma \ref{symnoasymlem}, oriented tagged arcs corresponding to cluster variables in $\mathcal{A}(S,M)^{H}$ is invariant under $\rho(H)$.  Thus $ T $ contains all tagged arcs corresponding to cluster variables in $ \Sigma_{1} $.

 According to Lemma \ref{tagstable}, we can assume that each tagged arc is tagging plain in $T$ except some arcs tagging in different ways at some punctures which are endpoints of tagged arcs corresponding to cluster variables in $\Sigma_1$.

We choose some regions determined by cluster variables in $\Sigma_{1}$ as follows.

(1) Suppose  $ \gamma, \gamma^{p}$ correspond to cluster variables in $ \Sigma_{1} $. We choose the loop $\zeta$ such that $\gamma$ and $\zeta $ form a self-folding and denote by $U_{\zeta}$ the region bounded by $\zeta$;

(2) Suppose $ \gamma_{i}, i=1, 2, 3$ are ones of the following cases: a loop of a self-folding triangle such that there is a tagged arc in $T$ tagging notched at the interior puncture of this self-folding triangle, boundar arc of $S$ and a tagged arc  corresponding to a cluster variables in $ \Sigma_{1} $. If  $\gamma_{1}, \gamma_{2}, \gamma_{3} $  form a triangle, we denote by  $U_{\gamma_{1},\gamma_{2},\gamma_{3}} $ this simply-connected triangle.

(3) If $ x_{\gamma} \in \Sigma_{1} $ and $\gamma$ does not satisfy the two condition mentioned above, we denote by $ U_{\gamma} $ the digon in which $ \gamma $ lies.

	Let $ U_{0} $ be the union of   $ U_{X} $ over the regions we chosen, then $ U_{0} $ contains all tagged arcs corresponding to cluster variables in $ \Sigma_{1} $. Then each $ U_{X} $ is invariant under $ \rho(H)$, i.e., for each $\varphi \in  \rho(H) $, there is  $\varphi'$ isotopic to $\varphi$ such that $\varphi'(u)=u, u \in U_{X}$. Hence $ U_{0} $ is also invariant under $ \rho(H) $.  Suppose either (i) $\gamma_{1}, \gamma_{2}$ corresponding to cluster variables in $\Sigma_{1}$ or (ii) one of them corresponding to cluster variables in $\Sigma_{1}$ and the other is boundary arc of $S$ such that $\gamma_{1} $ is adjacent to $\gamma_{2}$. Since each $f \in H$ is orientation-preserved, for some $\gamma_{3}$ in $T$ that is adjacent to $\gamma_{1}$ and $\gamma_{2}$, $\gamma_{3}$ is invariant under $\rho(H)$. It means that $b \backslash M$ is connected for each boundary component $b$.
	
	Let  $\{b_{1}, ..., b_{h}\}$ be the set consisting of boundary components of $U_{0}$ which have one marked point and  are not contained in $\partial S$.
     Suppose $U$ is obtained by gluing an annulus with one marked point on the boundaries along the boundary component with one marked point for each $b_{i}$.
      Thus $U$ is a subsurface of $(S,M)$.
    Since $\rho(f), f\in H$ maps these annuli to themselves and  its restriction to each annulus is a power of the Dehn twist about the boundary, $U$ is invariant under $ \rho(H)$.

    In the following proof, we show that $ x_{\gamma} \in \mathcal{A}(S,M)^{H} $
		if and only if the corresponding tagged arc $ \gamma $ lies in $ U $ (up to isotopy).
	
	 Otherwise, suppose there is an oriented tagged arc $ \eta $ invariant under $ \rho(H) $ not lying in $ U $. According to Lemma \ref{tagstable} we can assume $ \eta $ is an arc tagged plain. Similarly, we can choose a simply-connected digon $U_{\eta}$ containing $\eta$ which is invariant under $ \rho(H) $.
	
	Denote by $ U' $ the union of $ U_{\eta} $ and $ U $. Suppose $\eta$ intersects the boundary components of $U$ with minimal number of intersection points. Since $U$ is invariant under $ \rho(H) $, the image of each intersection point lies in the same segment of boundary component (tagged arc) of $U$ and  the order of intersection points is preserved under $ \rho(H) $. Since the connected components of the closure of $U_{\eta} \backslash U$ are simply-connected regions and $ U_{\eta}$ is invariant under $ \rho(H) $, the connected components of the closure of $U_{\eta} \backslash U$ are also invariant under $ \rho(H) $. Thus $U'$ is invariant under $ \rho(H) $.

	Note that if $U'$ is not a subsurface of $(S,M)$, we can similarly gluing annuli to obtain a subsurface, which is invariant under $ \rho(H) $. Thus we assume $U'$ is a subsurface for convenience.
	
	It follows from Lemma \ref{subnarclem} that the cardinality of a maximal tagged arcs set of $ U $ (resp.  $ U' $) is constant, which is denoted by $ \mathfrak{N} $ (resp. $ \mathfrak{N}' $).
	And it follows from Lemma \ref{addarc-lem} we have $ \mathfrak{N}' > \mathfrak{N} $. Then we can find a maximal  tagged arc set of $ U' $ which contains tagged arcs whose corresponding cluster variables in $ \Sigma_{1} $.
	It means we can find a mixing-type sub-seed $ \Sigma' \subset \mathcal{A}(S,M)^{H} $ containing $ \Sigma_{1} $, which is contrary to that $ \mathcal{A}(\Sigma_{1}) $ is a maximal cluster subalgebra in $ \mathcal{A}(S,M)^{H} $.
	
	Thus the tagged arcs corresponding cluster variables in $ \Sigma_{2} $ lie in $ U $ and hence the rank of $ \mathcal{A}(\Sigma_{2}) $ is less than or equal to the rank of $ \mathcal{A}(\Sigma_{1}) $.
	
	Similarly, we can show the rank of $ \mathcal{A}(\Sigma_{1}) $ is less than or equal to the rank of $ \mathcal{A}(\Sigma_{2}) $.
	Hence the rank of $ \mathcal{A}(\Sigma_{2}) $ is the same as $ \mathcal{A}(\Sigma_{1}) $'s.
	
\end{proof}

\begin{defi}\label{def-maxsubsurface}
	 	Let $ (S,M) $ be a feasible surface, $ \mathcal{A}(S,M) $ its cluster algebra without coefficient,  $ H \le \text{\em Aut} \mathcal{A}(S,M) $ and  any $ f \in H $ is orientation-preserved. A subsurface   $ U $ of $ (S,M) $ is called a \textbf{maximal invariant subsurface} of $ H $ in case $ x_{\gamma} \in \mathcal{A}(S,M)^{H} $ if and only if the corresponding tagged arc $ \gamma $ lies in $ U $ (up to isotopy).
	
\end{defi}

\begin{rmk}\label{maxsubsurfaceproperty-rmk}
	Suppose $ U_{1}, U_{2} $ are maximal invariant subsurfaces of $ H $, their boundary components including exactly one marked point are contained in $\partial S$ and  all tagged arcs are contained in their connected components. Then the genuses, the sets of boundary components, the punctures and marked points on the boundaries of $ U_{1} $ and $ U_{2} $ are the same respectively. Otherwise we can find a proper tagged arc which lies in one of $U_{1}$ and $U_{2}$ but does not lie in the other, which is contrary to Definition \ref{def-maxsubsurface}. Thus in the sequel, we always assume maximal invariant subsurfaces satisfying the above conditions.
	
	Note that the proof of Theorem \ref{c1} provides a way to obtain a maximal invariant subsurface $ U $ of $ H $ for the case that any $  f \in  H $ is orientation-preserved.
	Suppose $ \mathcal{A}(\Sigma_{1}) $ is a maximal cluster subalgebra in $\mathcal{A}(S,M)^{H}$. Similar to the proof of Theorem \ref{c1}, we can prove that for an orientation-preserved cluster automorphism $ f $, $ \Sigma_{1} $ is invariant under $ f $ if and only if $ U $ is invariant under $ \rho(f) $. And the set of tagged arcs corresponding to cluster variables in $ \Sigma' $ is a maximal tagged arcs set of $ (U, U\cap M) $ if and only if $ \mathcal{A}(\Sigma') $ is a maximal cluster subalgebra in $\mathcal{A}(S,M)^{H}$.
	
\end{rmk}

For cluster algebras from feasible surfaces, Conjecture \ref{srk} also holds for maximal cluster subalgebras $ \mathcal{A}(\Sigma_{i}), i=1, 2 $ in $ \mathcal{A}(S,M)^{H} $, even though they are not Galois-like extension subalgebra. In fact, we have the following corollary.

\begin{cor}\label{surfacegaloislikecor}
	Let $ (S,M) $ be a feasible surface, $ \mathcal{A}(S,M) $ its cluster algebra without coefficient, $ H \in \mathit{2}^{\text{\em Aut}\mathcal{A}} \backslash \text{ker} \phi $ whose  any element  $  f $ is orientation-preserved.  Then $ \mathcal{A}(S,M) $ is a Galois-like extension over $ \mathcal{A}(\Sigma_{1}) $ for any maximal cluster subalgebra $ \mathcal{A}(\Sigma_{1}) $ in $ \mathcal{A}(S,M)^{H} $.
\end{cor}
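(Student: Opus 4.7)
The goal is to verify $\mathcal{A}(\Sigma_{1}) \in \mathcal{M}_{sub}^{H}$. Since $\mathcal{A}(\Sigma_{1})$ is already assumed to be a maximal cluster subalgebra inside $\mathcal{A}(S,M)^{H}$, what is left to check is the identity $\text{Gal}_{\Sigma_{1}}\mathcal{A}(S,M) = H$. The inclusion $H \subseteq \text{Gal}_{\Sigma_{1}}\mathcal{A}(S,M)$ is immediate, because $\mathcal{A}(\Sigma_{1}) \subseteq \mathcal{A}(S,M)^{H}$ forces every $f \in H$ to fix $\mathcal{A}(\Sigma_{1})$ pointwise.

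The reverse inclusion is the content of the argument, and my plan is to exploit the hypothesis $H \notin \text{ker}\,\phi$. Choose any $\mathcal{A}(\Sigma_{0}) \in \mathcal{M}_{sub}^{H}$; by definition this yields both $\text{Gal}_{\Sigma_{0}}\mathcal{A}(S,M) = H$ and maximality of $\mathcal{A}(\Sigma_{0})$ in $\mathcal{A}(S,M)^{H}$. Because every $f \in H$ is orientation-preserved, the subsurface-construction used in the proof of Theorem \ref{c1} produces a maximal invariant subsurface $U$ of $H$ (Definition \ref{def-maxsubsurface}), with the geometric characterization that $x_{\gamma} \in \mathcal{A}(S,M)^{H}$ if and only if $\gamma$ lies in $U$ up to isotopy. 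Applying Remark \ref{maxsubsurfaceproperty-rmk} to both $\Sigma_{1}$ and $\Sigma_{0}$, the underlying tagged arcs of their cluster variables both form maximal tagged arc sets of $(U, U \cap M)$.

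The decisive step is to prove $\mathcal{A}(\Sigma_{1}) = \mathcal{A}(\Sigma_{0})$ as subalgebras of $\mathcal{A}(S,M)$, since the Galois group cut out by a subalgebra is an intrinsic invariant of the subalgebra and therefore $\text{Gal}_{\Sigma_{1}}\mathcal{A}(S,M) = \text{Gal}_{\Sigma_{0}}\mathcal{A}(S,M) = H$ would follow at once. For this I would introduce the ``full'' subalgebra $\mathcal{A}_{U} \subseteq \mathcal{A}(S,M)$ generated by every cluster variable $x_{\gamma}$ with $\gamma$ a tagged arc in $U$. Using the surface/arc dictionary (where $b_{xy} \neq 0$ corresponds to the arcs $\gamma_{x}, \gamma_{y}$ sharing a triangle) together with Theorem \ref{subseed}, one checks that $\mathcal{A}_{U}$ is genuinely a cluster subalgebra of $\mathcal{A}(S,M)$, with boundary arcs of $U$ and arcs adjacent to the complementary components designated as frozen. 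The defining property of $U$ gives $\mathcal{A}_{U} \subseteq \mathcal{A}(S,M)^{H}$, while the containments $\mathcal{A}(\Sigma_{1}), \mathcal{A}(\Sigma_{0}) \subseteq \mathcal{A}_{U}$ are automatic since their cluster variables correspond to arcs lying in $U$. The assumed maximality of $\mathcal{A}(\Sigma_{1})$ and $\mathcal{A}(\Sigma_{0})$ in $\mathcal{A}(S,M)^{H}$ then collapses both of them to $\mathcal{A}_{U}$, completing the argument.

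The main obstacle will be the rigorous verification that $\mathcal{A}_{U}$ qualifies as a cluster subalgebra in the precise sense of Theorem \ref{subseed}; concretely, that the mixing-type sub-seed obtained by retaining exactly the tagged arcs of $U$ and freezing the appropriate ones satisfies the vanishing condition $b_{xy} = 0$ for $x \in I_{1}$ and $y$ exchange in the sub-seed. Geometrically this amounts to showing that no arc of $U$ whose flip remains inside $U$ can share a triangle with an arc lying in $\overline{S \backslash U}$, and I expect it to be established by a case analysis of how the connected components of $\overline{S \backslash U}$ attach to $U$, in the spirit of the classification carried out in Lemma \ref{subnarclem}.
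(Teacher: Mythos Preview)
Your reduction to showing $\text{Gal}_{\Sigma_{1}}\mathcal{A}(S,M)=H$ is correct, and invoking a reference element $\mathcal{A}(\Sigma_{0})\in\mathcal{M}_{sub}^{H}$ together with the maximal invariant subsurface $U$ is exactly how the paper begins. The gap is in your ``decisive step'': you try to prove the equality of Galois groups by proving the much stronger statement $\mathcal{A}(\Sigma_{1})=\mathcal{A}(\Sigma_{0})$, and this statement is in general false. When $U$ has a boundary component $b$ carrying no marked point, any maximal tagged arc set of $U$ must contain an encircling loop $\gamma_{b}$ (Lemma~\ref{subnarclem}), and this $\gamma_{b}$ is necessarily frozen in the corresponding sub-seed because its flip in $(S,M)$ leaves $U$. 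Two maximal cluster subalgebras coming from different basepoints for $\gamma_{b}$ then contain different frozen variables that are not compatible with one another, so neither algebra contains the other's frozen variable and they are not equal. For the same reason your proposed $\mathcal{A}_{U}$ cannot be realised as a cluster subalgebra in the sense of Theorem~\ref{subseed}: there is no single mixing-type sub-seed whose mutation class exhausts all tagged arcs of $U$, precisely because every such sub-seed freezes a particular $\gamma_{b}$ and thereby misses all arcs crossing it.

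The paper avoids this by comparing the Galois groups directly rather than the subalgebras. The crucial point you are missing is that one must first check that every $g\in\text{Gal}_{\Sigma_{1}}\mathcal{A}(S,M)$ is itself orientation-preserved; a priori $\text{Gal}_{\Sigma_{1}}$ could contain an orientation-reversing automorphism fixing all arcs of $\Sigma_{1}$ while swapping the two sides of some arc. The paper handles this by a dichotomy on $U$: either all components of $U$ are digons, once-punctured annuli or twice-punctured disks, in which case the maximal tagged arc set is essentially unique and Lemma~\ref{tagstable} gives $\text{Gal}_{\Sigma_{1}}=\text{Gal}_{\Sigma_{0}}$ immediately; or some component admits a genuine triangle among the (modified) arcs of $\Sigma_{1}$, and preserving an oriented triangle forces every $g\in\text{Gal}_{\Sigma_{1}}$ to be orientation-preserved. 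Once that is known, Remark~\ref{maxsubsurfaceproperty-rmk} characterises both $\text{Gal}_{\Sigma_{1}}$ and $H=\text{Gal}_{\Sigma_{0}}$ as $\{f\ \text{orientation-preserved}:\ U\text{ invariant under }\rho(f)\}$, and the equality follows without ever needing $\mathcal{A}(\Sigma_{1})=\mathcal{A}(\Sigma_{0})$.
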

\begin{proof}
	Let $ U $ be the maximal invariant subsurface of $ H $ and $ \mathcal{A}(\Sigma') \in \mathcal{M}_{sub}^{H} $.

	If the connected components of $U$ are digons, once-punctured annuli or twice-punctured disks, then the maximal tagged arcs set of $U$ uniquely (neglecting the taggings) consists of two edges of  each digon, the loop lying in each annulus and two tagged arcs tagged in different ways.   Hence $\text{Gal}_{\Sigma_{1}}\mathcal{A}(S,M)=\text{Gal}_{\Sigma'}\mathcal{A}(S,M)$ by Lemma \ref{tagstable}.
	
	Otherwise, in other cases, for any maximal tagged arcs $A$ set of $U$, after adding boundary arcs of $S$ and replacing each pair tagged arcs in $A$ which is the form as $\gamma, \gamma^{p}$ with the self-folding triangle containing $\gamma$, there exists a triple of tagged arcs in the new set forming a triangle. According to Remark \ref{maxsubsurfaceproperty-rmk}, the set of taged arcs corresponding to $\Sigma_{1}$ is a maximal tagged arcs set $A_{1}$ of $U$. Thus after modifying, there exists a triple of tagged arcs in the new set obtained from $A_{1}$ such that this triple forms a triangle. Besides, cluster variables corresponding to elements in this new set are invariant under $\text{Gal}_{\Sigma_{1}}\mathcal{A}(S,M)$ and the existence of a triangle guarantee that elements in $\text{Gal}_{\Sigma_{1}}\mathcal{A}(S,M)$ are orientation-preserved.
	
	Since  elements in $H$ are orientation-preserved, according to Remark \ref{maxsubsurfaceproperty-rmk}, $f \in \text{Gal}_{\Sigma_{1}}\mathcal{A}(S,M)$ if and only if $U$ is invariant under $\rho(f)$, which is equivalent to $f \in \text{Gal}_{\Sigma'}\mathcal{A}(S,M)=H$.
	
\end{proof}

Now we consider the Galois inverse problem for a feasible surface $ (S, M) $, that is,  under what condition is a subgroup $H$ of $\text{Aut}\mathcal{A}(S, M) $ a Galois group?

 Let $(C,C\cap M)$ be a subsurface of the marked surface $(S,M)$.
	Denote by $ \text{Mod}(C,C\cap M) $ (resp. $ \text{Mod}_{\bowtie}(C,C\cap M) $ the group consiting of elements of $ \mathcal{MCG}(C,C\cap M) $ (resp. $ \mathcal{MCG}_{\bowtie}(C,C\cap M) $) fixing boundary components of $C$ not contained in $\partial S$  if $S$ is a once-punctured closed surface $S$ (resp. if $S$ is not a once-punctured closed surface).

	Note that for a feasible surface $ (S,M) $ and a subsurface $ C $ of $ S$,
	there is a natural homomorphism $ \iota : \text{Mod}_{\bowtie}(C,C\cap M) \to \mathcal{MCG}_{\bowtie}(S,M) $ induced by the embedding.
	According to \cite{FM12}, if no connected component of $ S \backslash C $ is an open annulus, an open disk, or an open once-punctured disk, then $  \iota $ is a monomorphism. In this case,  $\text{Mod}(C,C\cap M)$ (resp. $ \text{Mod}_{\bowtie}(C,C\cap M) $) is a subgroup of $ \mathcal{MCG}(S,M) $ (resp. $ \mathcal{MCG}_{\bowtie}(S,M)$) under the monomorphism.

\begin{prop}\label{prop-Galois-inverse-problem}
	Assume $ \mathcal{A}(S, M) $ is a cluster algebra without coefficient from a feasible surface $(S, M)$. Let $ C $ be a subsurface of $ S $ such that there is  no connected component of  $ U= \overline{S\backslash C}  $ which is a closed annulus or a closed once-punctured disk and there is a connected component of $U$ which i different from a digon, a closed once-punctured annulus and a closed twice-punctured disk, where $\overline{S\backslash C}$ means the closure of $S\backslash C$. Then we have the following statements:
	
	(a) \; If $ (S, M) $ is a once-punctured closed surface, then
	
	(i)\; $ \rho'^{-1}(\text{\em Mod}(C,C\cap M)) $ is a Galois group.
	
	(ii)\;	Conversely, for a Galois group $ H  $ whose any element $ f $ is orientation-preserved, there is a subsurface $C$ such that $ H=\rho'^{-1}(\text{\em Mod}(C,C\cap M)) $.
	
	(b) \; If $ (S, M) $ is not a once-punctured closed surface, then
	
		(i)\; $ \rho'^{-1}(\text{\em Mod}_{\bowtie}(C,C\cap M)) $ is a Galois group.
	
	(ii)\;	Conversely, for a Galois group $ H  $ whose any element $ f $ is orientation-preserved, there is a subsurface $C$ such that $ H=\rho'^{-1}(\text{\em Mod}_{\bowtie}(C,C\cap M)) $.
\end{prop}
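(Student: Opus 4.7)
The plan is to establish a bijective correspondence between the subsurfaces $C$ satisfying the structural hypotheses and Galois groups whose elements are orientation-preserved, via the passage $C \leftrightarrow \rho'^{-1}(\text{Mod}_{\bowtie}(C, C\cap M))$ (or its unsigned analogue in the once-punctured closed case). The key technical input is the description of the maximal invariant subsurface of an orientation-preserved subgroup of $\text{Aut}\mathcal{A}(S,M)$ obtained in Theorem \ref{c1} and Remark \ref{maxsubsurfaceproperty-rmk}, together with the isomorphism between cluster automorphisms and tagged mapping classes from Theorem \ref{mcg}.

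For statement (i) of each part, I would fix a maximal tagged arcs set $A$ of $(C, C\cap M)$, enlarge it to a tagged triangulation $T$ of $(S, M)$ by adjoining arcs lying outside $C$, and take $\Sigma'$ to be the mixing-type sub-seed of the seed associated with $T$ whose exchange variables correspond to $A$ and whose frozen variables are the arcs of $T$ outside $C$ that are adjacent to some arc of $A$. The inclusion $\rho'^{-1}(\text{Mod}_{\bowtie}(C, C\cap M)) \subseteq \text{Gal}_{\Sigma'}\mathcal{A}(S, M)$ is essentially immediate from Theorem \ref{mcg}, since a mapping class supported in $C$ that fixes the boundary components of $C$ outside $\partial S$ fixes every tagged arc lying in $C$ up to isotopy. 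For the reverse inclusion, given $f \in \text{Gal}_{\Sigma'}\mathcal{A}(S, M)$, I need to show that $\rho(f)$ admits a representative supported in $C$ that acts trivially on the boundary components of $C$ not contained in $\partial S$; an argument analogous to the proof of Theorem \ref{c1}, using orientation-preservation via Lemmas \ref{asymnosymlem}--\ref{symnoasymlem} and the counts from Lemmas \ref{subnarclem}--\ref{addarc-lem}, should localise $\rho(f)$ to $C$ once one excludes the pathologies ruled out by the hypotheses on $U$.

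For statement (ii), given a Galois group $H$ all of whose elements are orientation-preserved, pick $\mathcal{A}(\Sigma_1) \in \mathcal{M}_{sub}^{H}$ and take $C$ to be the maximal invariant subsurface $U_0$ of $H$ produced in Remark \ref{maxsubsurfaceproperty-rmk}. Then $H \subseteq \rho'^{-1}(\text{Mod}_{\bowtie}(C, C\cap M))$ because each $f \in H$ fixes every tagged arc lying in $C$ at the isotopy level, and orientation-preservation together with the feasibility of $(S,M)$ prevents $\rho(f)$ from reversing any boundary component of $C$; the reverse inclusion then follows from (i) applied to this $C$, after one verifies that $U_0$ itself meets the structural hypotheses. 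The main obstacle throughout is the reverse inclusion in (i): a closed annulus or closed once-punctured disk component of $U$ would admit a Dehn twist or puncture rotation that fixes every tagged arc in $C$ yet escapes $\text{Mod}_{\bowtie}(C, C\cap M)$, inflating the Galois group, while subsurfaces built entirely from digons, closed once-punctured annuli and closed twice-punctured disk components of $U$ encode too little combinatorial data in the arcs of $C$ to pin down $\rho(f)$ at all. These two failure modes correspond precisely to the two structural hypotheses on $U$, and the bulk of the work will consist of a case analysis confirming that outside these configurations any element of $\text{Gal}_{\Sigma'}\mathcal{A}(S, M)$ genuinely lies in $\rho'^{-1}(\text{Mod}_{\bowtie}(C, C\cap M))$.
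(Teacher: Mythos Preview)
Your proposal has the roles of $C$ and $U$ reversed throughout, and this is not a notational quibble but a genuine error in the argument. The group $\text{Mod}_{\bowtie}(C,C\cap M)$ consists of mapping classes \emph{supported in} $C$; under the embedding $\iota$ into $\mathcal{MCG}_{\bowtie}(S,M)$ they extend by the identity on $U=\overline{S\backslash C}$. Hence such a class fixes every tagged arc lying in $U$, not in $C$ --- indeed the whole point of $\text{Mod}_{\bowtie}(C,C\cap M)$ is that it can move arcs inside $C$ freely. Your sentence ``a mapping class supported in $C$ that fixes the boundary components of $C$ outside $\partial S$ fixes every tagged arc lying in $C$ up to isotopy'' is therefore false, and the inclusion $\rho'^{-1}(\text{Mod}_{\bowtie}(C,C\cap M))\subseteq \text{Gal}_{\Sigma'}\mathcal{A}(S,M)$ fails for your choice of $\Sigma'$.

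The paper's proof goes the other way: for (i) one takes a maximal cluster subalgebra $\mathcal{A}(\Sigma')$ in $\mathcal{A}(S,M)^{G}$ with $G=\rho'^{-1}(\text{Mod}_{\bowtie}(C,C\cap M))$, so the arcs of $\Sigma'$ lie in $U$, and the maximal invariant subsurface $U'$ of $G$ contains $U$. The hypothesis on components of $U$ guarantees that elements of $G$ (and then of $\text{Gal}_{\Sigma'}\mathcal{A}(S,M)$, via the argument of Corollary~\ref{surfacegaloislikecor}) are orientation-preserved; Remark~\ref{maxsubsurfaceproperty-rmk} then yields $\rho'(\text{Gal}_{\Sigma'}\mathcal{A}(S,M))\subseteq \mathcal{MCG}_{\bowtie}(S,M)_{U'}\subseteq \mathcal{MCG}_{\bowtie}(S,M)_{U}=\rho'(G)$. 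For (ii) one takes $U$ to be the maximal invariant subsurface of $H$ and sets $C=\overline{S\backslash U}$, exactly the complement of what you wrote. Once you swap $C$ and $U$ in your plan, your outline is close to the paper's, but note that the paper never builds $\Sigma'$ explicitly from a triangulation of one piece; it works directly with the maximal invariant subsurface machinery already developed in Theorem~\ref{c1}, Remark~\ref{maxsubsurfaceproperty-rmk} and Corollary~\ref{surfacegaloislikecor}.
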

\begin{proof}
We only prove (b). The proof of (a) is similar.
	
	Since no connected component of  $ U  $ is a closed annulus, a closed disk, or a closed once-punctured disk, the homomorphism $$ \iota:\; \text{Mod}_{\bowtie}(C,C\cap M) \to \mathcal{MCG}_{\bowtie}(S,M) $$ induced by the embedding is monomorphic. Hence we have $ G=\rho'^{-1}(\text{Mod}_{\bowtie}(C,C\cap M)) \le \text{Aut}\mathcal{A}(S,M)$.
	
	Since there is a connected component of $U$ which is different from a digon, a closed once-punctured annulus and a closed twice-punctured disk,  we have that the elements in $G$ are orientation-preserved. Let $U'$ be the maximal invariant subsurface of $G$, we can assume $U \subset U'$. Similar to the proof of Corollary \ref{surfacegaloislikecor}, we can show that for any maximal cluster subalgebra $\mathcal{A}(\Sigma')$ in $\mathcal{A}(S,M)^{G} $, the elements in $\text{Gal}_{\Sigma'}\mathcal{A}(S,M)$ are orientation-preserved.
	
	Let
	\[ \mathcal{MCG}_{\bowtie}(S,M)_{X}= \{\varphi \in  \mathcal{MCG}_{\bowtie}(S,M) \;| \; X \text{ is invariant under a representative of } \varphi \}. \]
	According to Remark \ref{maxsubsurfaceproperty-rmk}, if $f \in \text{Gal}_{\Sigma'}\mathcal{A}(S,M) $, then
 $$\rho'(f) \in \mathcal{MCG}_{\bowtie}(S,M)_{U'} \subset \mathcal{MCG}_{\bowtie}(S,M)_{U}=\rho'(G).$$
 Thus $ \text{Gal}_{\Sigma'}\mathcal{A}(S,M) \le G$. On the other hand, it is obvious that $\text{Gal}_{\Sigma'}\mathcal{A}(S,M) \ge G$ and hence we have $\text{Gal}_{\Sigma'}\mathcal{A}(S,M) = G$.
	
	Conversely, suppose $ H=\text{Gal}_{\Sigma'}\mathcal{A}(S,M)$ and its elements are orientation-preserved. Let $ U $ be the maximal invariant subsurface of $ H $ and $ C=\overline{S\backslash U} $. Since $ \overline{S\backslash C} =U$ and  no connected component of  $ U $ is a closed annulus, a closed disk, or a closed once-punctured disk, we have $ \rho'^{-1}(\text{Mod}_{\bowtie}(C,C\cap M)) \le \text{Aut}\mathcal{A}(S,M) $.
	
 According to Remark \ref{maxsubsurfaceproperty-rmk}, since $ f \in H $ or $ f \in \rho'^{-1}(\mathcal{MCG}_{\bowtie}(S,M)_{U}) $ is orientation-preserved,  we have  $ H=\text{Gal}_{\Sigma'}\mathcal{A}(S,M) =\rho'^{-1}(\mathcal{MCG}_{\bowtie}(S,M)_{U}) = \rho'^{-1}(\text{Mod}_{\bowtie}(C,C\cap M)) $.
	
\end{proof}

Note that the maximality of $ \mathcal{A}(\Sigma_{i}), i=1, 2 $ in $ \mathcal{A}(S,M)^{H} $ does not guarantee that $ \mathcal{A}(\Sigma_{1})=\mathcal{A}(\Sigma_{2}) $.

\begin{figure}[H]
	\centerline{\includegraphics[scale =0.2]{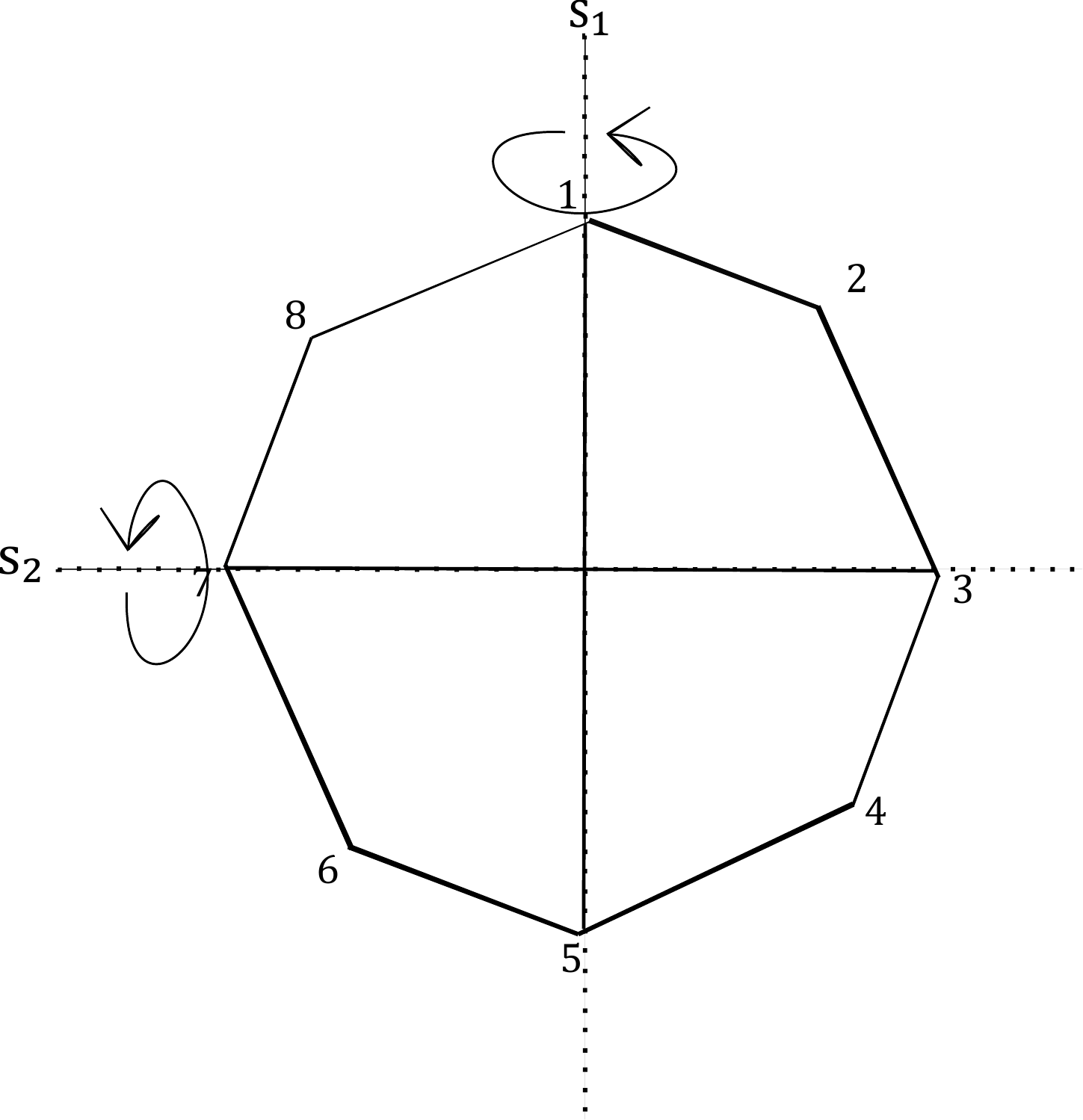}}
	\caption{Reflections $ s_{1}, s_{2} $ in octagon} \label{octa}
\end{figure}

\begin{ex}\label{ex8gon}
	Suppose $ P $ is an octagon whose vertices are labelled by $ 1, ..., 8 $ clockwise see Figure \ref{octa}, $ \mathcal{A}(\Sigma_{1}) $ (resp. $ \mathcal{A}(\Sigma_{2}) $) is the cluster subalgebra with frozen variables corresponding to the diagonal $ \gamma_{15} $ (resp. $ \gamma_{37} $), $ H $ is the subgroup generated by reflections $ s_{1}, s_{2} $ and the $ 180^{\circ} $ rotation. Then $ \mathcal{A}(\Sigma)_{i} \in \mathcal{M}_{sub}^{H}, i=1, 2 $, but $\mathcal{A}(\Sigma_{1}) \neq \mathcal{A}(\Sigma_{2}) $.
\end{ex}

\begin{thm}\label{c2}
	Let $ \mathcal{A}(S,M) $ be a cluster algebra from a feasible surface $(S,M)$ without coefficient, $ H \le \text{\em Aut}\mathcal{A}(S,M) $, $ \mathcal{A}(\Sigma_{1}) $, $ \mathcal{A}(\Sigma_{2}) \in \mathcal{M}^{H}_{sub} $,
	$ H_{1}, H_{2},...,H_{s} \in \mathit{2}^{\text{\em Aut}\mathcal{A}(S,M)} \backslash \text{ker}\phi $ and $ H_{1} \le H_{2} \le \cdots \le H_s$. Then for any $ \mathcal{A}(\Sigma_{i}) \in \mathcal{M}^{H_{i}}_{sub}, 1 \leq i \leq s $, there exist $ \mathcal{A}(\Sigma_{j}) \in \mathcal{M}^{H_{j}}_{sub}$ for $ j=1,..., i-1, i+1, ..., s $ such that
	\begin{equation}\label{Aseq}
		\mathcal{A}(\Sigma_{1}^{(d)}) \ge \cdots\ge \mathcal{A}(\Sigma_{i}^{(d)}) \ge \cdots \ge \mathcal{A}(\Sigma_{s}^{(d)}).
	\end{equation}
	That is, there is a Galois map for $ \mathcal{A}(S, M) $.
	
	In particular, if $ H_{1} < H_{2} <\cdots < H_{s} $ and any $ f \in H_{s} $ is orientation-preserved, then we have
	\begin{equation}\label{Aseq1}
		\mathcal{A}(\Sigma_{1}) > \cdots > \mathcal{A}(\Sigma_{i}) > \cdots > \mathcal{A}(\Sigma_{s}).
	\end{equation}
\end{thm}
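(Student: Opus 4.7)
The plan is to translate the ascending group chain $H_1 \le H_2 \le \cdots \le H_s$ into a descending chain of maximal invariant subsurfaces via Remark \ref{maxsubsurfaceproperty-rmk}, then read off the corresponding cluster subalgebras. First I would dispose of the non-orientation-preserved case: if some $H_k$ contains a non-orientation-preserved element, then by Remark \ref{extodirect} combined with the argument used in the proof of Theorem \ref{c1}, every $\mathcal{A}(\Sigma') \in \mathcal{M}^{H_k}_{sub}$ has no exchange variables, and so $\mathcal{A}(\Sigma'^{(d)})=\mathbb{Q}$. Since $H_{k'} \ge H_k$ inherits this property for every $k' \ge k$, all the corresponding reduced subalgebras collapse to $\mathbb{Q}$ and the chain condition in (\ref{Aseq}) becomes trivial for those indices; the remaining indices $k' < k$ where $H_{k'}$ is orientation-preserved are handled by the analysis below.

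In the orientation-preserved case I would associate to each $H_k$ a maximal invariant subsurface $U_k$ as in Definition \ref{def-maxsubsurface}. Since $H_j \le H_k$ implies $\mathcal{A}(S,M)^{H_j} \supseteq \mathcal{A}(S,M)^{H_k}$, one may make consistent choices so that $U_1 \supseteq U_2 \supseteq \cdots \supseteq U_s$. The prescribed $\mathcal{A}(\Sigma_i) \in \mathcal{M}^{H_i}_{sub}$ then corresponds to a maximal tagged arcs set $T_i$ of $(U_i, U_i\cap M)$ by Remark \ref{maxsubsurfaceproperty-rmk}. For $j<i$ I would extend $T_i$ to a maximal tagged arcs set $T_j$ of the larger surface $U_j$; for $j>i$ I would restrict $T_i$ to $U_j$ (which is still a collection of pairwise compatible arcs) and extend to a maximal tagged arcs set $T_j$ of $U_j$. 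By Remark \ref{maxsubsurfaceproperty-rmk} again, each $T_j$ gives rise to some $\mathcal{A}(\Sigma_j) \in \mathcal{M}^{H_j}_{sub}$.

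To verify the chain (\ref{Aseq}), for $j<k$ the construction yields $T_k \supseteq T_k \cap U_k \supseteq$ (the part of $T_j$ lying in $U_k$), with $T_j \supseteq T_k \cap U_k$ when $T_k$ is built from $T_j$ by restriction-then-extension. In particular every exchange variable of $\Sigma_k$ whose arc lies in the interior of $U_k$ persists as a (possibly frozen) variable of $\Sigma_j$. The operation $\Sigma \mapsto \Sigma^{(d)}$ discards precisely those frozen variables that are disconnected from the exchange part, so the reduced sub-seed comparison $\mathcal{A}(\Sigma_j^{(d)}) \supseteq \mathcal{A}(\Sigma_k^{(d)})$ follows from tracking which tagged arcs bordering $\partial U_k$ survive reduction. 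For the strict case, $H_k < H_{k+1}$ together with the orientation-preserved hypothesis forces some tagged arc to lose its invariance, hence $U_{k+1} \subsetneq U_k$; this can be refined via Lemma \ref{addarc-lem} to produce a tagged arc in $T_k \setminus T_{k+1}$ whose corresponding exchange variable witnesses $\mathcal{A}(\Sigma_k) \supsetneq \mathcal{A}(\Sigma_{k+1})$ without passing to the reduced form, giving (\ref{Aseq1}).

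The main technical obstacle is the compatibility of the extension-restriction procedure with the reduced sub-seed operation. Concretely, when $U_k \subsetneq U_j$ the boundary $\partial U_k$ may cut some tagged arcs of $T_j$, so one must be careful that: (i) the restriction $T_i \cap U_j$ really extends to a maximal tagged arcs set of $U_j$ without introducing spurious isotopy classes; (ii) frozen variables of $\Sigma_k$ attached to $\partial U_k$ that are adjacent to exchange variables must correspond to variables present in $\Sigma_j$ after reduction. Resolving this cleanly requires a case analysis on the connected components of $S \setminus U_k$ via Lemma \ref{subnarclem} (to control boundary behaviour around annular and once-punctured annular components) and a careful use of the fact that $\rho(H_k)$ acts on $U_k$ by homeomorphisms preserving $\partial U_k$.
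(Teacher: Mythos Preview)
Your overall strategy matches the paper's: split off the non-orientation-preserved case (where reduced sub-seeds collapse), then in the orientation-preserved case use the nested maximal invariant subsurfaces $U_1\supseteq\cdots\supseteq U_s$ and build the $\Sigma_j$ by extending or restricting maximal tagged arc sets. The paper works locally (it only constructs $\Sigma_{i\pm1}$ from a given $\Sigma_i$), but this is a cosmetic difference.

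There is, however, a genuine gap in your argument. When you say ``by Remark~\ref{maxsubsurfaceproperty-rmk} again, each $T_j$ gives rise to some $\mathcal{A}(\Sigma_j)\in\mathcal{M}^{H_j}_{sub}$,'' you are overreaching: that remark only tells you that a maximal tagged arc set of $U_j$ corresponds to a \emph{maximal cluster subalgebra of $\mathcal{A}(S,M)^{H_j}$}. Membership in $\mathcal{M}^{H_j}_{sub}$ requires the additional condition $\text{Gal}_{\Sigma_j}\mathcal{A}(S,M)=H_j$, and nothing you have said rules out $\text{Gal}_{\Sigma_j}\mathcal{A}(S,M)\gneq H_j$. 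The paper closes this gap by invoking Corollary~\ref{surfacegaloislikecor}, which says precisely that under the orientation-preserved hypothesis (and $H_j\notin\ker\phi$) every maximal cluster subalgebra in $\mathcal{A}(S,M)^{H_j}$ is automatically Galois-like. You need this step.

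Two smaller points. First, for $j>i$ your ``restrict $T_i$ to $U_j$ and extend'' risks introducing arcs not in $T_i$, so that $\Sigma_j\not\subset\Sigma_i$; the paper avoids this by working with the region $U_i'$ built directly from the arcs of $\Sigma_i$ (before the annulus-gluing), and arguing that a maximal tagged arc set of $U_{i+1}$ can be found inside $U_i'$. Second, your strict-case argument via Lemma~\ref{addarc-lem} is unnecessarily geometric: once you have the non-strict chain~(\ref{Aseq}) with each $\mathcal{A}(\Sigma_k)\in\mathcal{M}^{H_k}_{sub}$, strictness is immediate from $\mathcal{M}^{H_k}_{sub}\cap\mathcal{M}^{H_{k+1}}_{sub}=\emptyset$ whenever $H_k\neq H_{k+1}$, since elements of these sets have distinct Galois groups by definition. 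This is how the paper concludes.
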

\begin{proof}
	To prove (\ref{Aseq}) holds for any $ \mathcal{A}(\Sigma_{i}) \in \mathcal{M}^{H_{i}}_{sub}, 1 \leq i \leq s $, it is sufficient to prove that for any $ \mathcal{A}(\Sigma_{i}) \in \mathcal{M}^{H_{i}}_{sub} $, there exist $ \mathcal{A}(\Sigma_{j}) \in \mathcal{M}^{H_{j}}_{sub} $ for $ j= i+1, i-1 $ such that
	\begin{equation}\label{Asseq}
		\mathcal{A}(\Sigma_{i-1}^{(d)}) \ge \mathcal{A}(\Sigma_{i}^{(d)}) \ge \mathcal{A}(\Sigma_{i+1}^{(d)}).
	\end{equation}

	Suppose $ t $ is the maximal index such that any $ f \in H_{t} $ is orientation-preserved, if the condition is false for any $ i\in [1,s] $,
	we set $ t=0 $. If $ i > t $, according to Remark \ref{extodirect}, $ \Sigma_{i}^{(d)} $ is trivial for any $ \mathcal{A}(\Sigma_{i}) \in \mathcal{M}^{H_{i}}_{sub} $ and hence (\ref{Asseq}) holds.

	If $ i \le t $, there is a maxiaml $ \mathcal{A}(\Sigma_{i-1}) \in \mathcal{A}(S,M)^{H_{i-1}} $ such that $ \mathcal{A}(\Sigma_{i-1}) \ge \mathcal{A}(\Sigma_{i}) $ and $ f $ is orientation-preserved for any $ f \in H_i $.
	Since $ H_{i-1} \in \mathit{2}^{\text{Aut}\mathcal{A}(S,M)} \backslash \text{ker}\phi $, according to Corollary \ref{surfacegaloislikecor},
	we have $ \mathcal{A}(\Sigma_{i-1}) \in \mathcal{M}^{H_{i-1}}_{sub}$.
	
	On the other hand, if $ i\ge t $, then $ \Sigma_{i+1}^{(d)} $ is trivial for any $ \mathcal{A}(\Sigma_{i+1}) \in \mathcal{M}^{H_{i+1}}_{sub} $ and we have $ \mathcal{A}(\Sigma_{i+1}^{(d)}) \le \mathcal{A}(\Sigma_{i}^{(d)}) $.
	Otherwise, $ H_{i} \leq H_{i+1} \leq \text{Aut}^{+}\mathcal{A}(S,M)$ and $ f $ is orientation-preserved for any $ f \in H_{i+1} $.
	
Since $  \mathcal{A}(S,M)^{H_{i+1}} \subset \mathcal{A}(S,M)^{H_{i}} $, according to Remark \ref{maxsubsurfaceproperty-rmk}, tagged arcs lying in the maximal invariant subsurface $\hat{U}_{i+1}$ of $ H_{i+1} $ also lie in the maximal invariant subsurface $\hat{U}_{i}$ of $ H_{i} $ (up to isotopy). Thus there are $\varphi_{i}, \varphi_{i+1} \in \text{Homeo}^{+}_{0}(S,M)$ such that $ U_{i+1}=\varphi_{i+1}(\hat{U}_{i+1}) \subset U_{i}=\varphi_{i}(\hat{U}_{i})$. Note that $U_{i}$ (resp. $U_{i}$) is also the maximal invariant subsurface of $H_{i}$ (resp. $H_{i+1}$). According to the proof of Theorem \ref{c1}, we can obtain $U_{i}$ from $U_{i}'$, which is the union of some areas determined by tagged arcs corresponding to cluster variables in $\Sigma_{i}$, by gluing annuli for all boundary components with one marked point. In this way, the cardinality of a maximal tagged arcs set is unchanged. Thus $U_{i}'$ contains a  maximal tagged arcs set of $U_{i+1}$ and it corresponding to a mixing-type sub-seed $\Sigma_{i+1}$. According to Remark \ref{maxsubsurfaceproperty-rmk}, $\mathcal{A}(\Sigma_{i+1}) \le \mathcal{A}(\Sigma_{i})$ and according to Corollary \ref{surfacegaloislikecor}, $\mathcal{A}(\Sigma_{i+1}) \in \mathcal{M}_{sub}^{H_{i+1}}$.
	Thus (\ref{Asseq}) holds, and hence (\ref{Aseq}) holds.
	
	If $ H_{1} < H_{2},...,H_{s}  $ and any $ f \in H_{s} $ is orientation-preserved, then $ t=s $ and we can see that (\ref{Asseq}) can be rewritten as \[ \mathcal{A}(\Sigma_{i-1}) > \mathcal{A}(\Sigma_{i}) > \mathcal{A}(\Sigma_{i+1}) ,\]
	since $ \mathcal{M}_{sub}^{H_{i}} \cap \mathcal{M}_{sub}^{H_{i+1}} = \emptyset $ for $ H_{i} \neq H_{i+1} $.
	Thus (\ref{Aseq1}) holds.
	
\end{proof}

\begin{cor}
	Let $ \mathcal{A}(S, M) $ be a cluster algebra from a feasible surface $(S, M)$ without coefficient, Suppose there is a strictly ascending sequence of Galois-like extension subalgebras of $ \mathcal{A}(S, M) $:
	\begin{equation}\label{subgroups} \mathcal{A}(\Sigma_{1}) > \cdots > \mathcal{A}(\Sigma_{i}) > \cdots > \mathcal{A}(\Sigma_{s}), \end{equation}
	and any $ f \in \text{\em Gal}_{\Sigma_{s}}\mathcal{A}(S,M) $ is orientation-preserved.
	Then a strictly descending sequence of subgroups of $ \text{\em Aut}\mathcal{A}(S,M) $:
	\begin{equation}\label{subalgebras}
		\text{\em Gal}_{\Sigma_{1}}\mathcal{A}(S,M)< \cdots <\text{\em Gal}_{\Sigma_{i}}\mathcal{A}(S,M) < \cdots <\text{\em Gal}_{\Sigma_{s}}\mathcal{A}(S,M),
	\end{equation}
	which is induced by Proposition \ref{strictgsub} and
	the sequence (\ref{subalgebras}) of cluster subalgebras cannot be refined if and only if the sequence (\ref{subgroups}) of subgroups cannot be refined.
\end{cor}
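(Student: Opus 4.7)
The plan is to prove the equivalence by establishing each direction using one of the two principal tools of this section: Proposition \ref{strictgsub} and the ``in particular'' part of Theorem \ref{c2}. The point is that these two results together give a length-respecting correspondence between strictly descending chains of Galois-like extension subalgebras and strictly ascending chains of Galois groups, provided every relevant cluster automorphism is orientation-preserved.

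For the implication that a refinement of the algebra chain yields a refinement of the subgroup chain, I would argue directly via Proposition \ref{strictgsub}. If the algebra chain $\mathcal{A}(\Sigma_1)>\cdots>\mathcal{A}(\Sigma_s)$ can be refined, then there exist an index $i$ and a Galois-like extension subalgebra $\mathcal{A}(\Sigma')$ with $\mathcal{A}(\Sigma_i)>\mathcal{A}(\Sigma')>\mathcal{A}(\Sigma_{i+1})$, producing a strictly descending chain of Galois-like extension subalgebras of length $s+1$. Feeding this longer chain into Proposition \ref{strictgsub} gives a strictly ascending chain of Galois groups of length $s+1$; in particular $\text{Gal}_{\Sigma'}\mathcal{A}(S,M)$ sits strictly between $\text{Gal}_{\Sigma_i}\mathcal{A}(S,M)$ and $\text{Gal}_{\Sigma_{i+1}}\mathcal{A}(S,M)$, which refines the subgroup chain.

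For the converse, suppose the subgroup chain can be refined by a Galois group $H\in \mathit{2}^{\text{Aut}\mathcal{A}(S,M)}\setminus\text{ker}\phi$ with $\text{Gal}_{\Sigma_i}\mathcal{A}(S,M)<H<\text{Gal}_{\Sigma_{i+1}}\mathcal{A}(S,M)$. Since $H$ is contained in $\text{Gal}_{\Sigma_s}\mathcal{A}(S,M)$, the hypothesis forces every element of $H$ to be orientation-preserved. Then I would invoke the ``in particular'' part of Theorem \ref{c2} on the strictly ascending chain
\[
\text{Gal}_{\Sigma_1}\mathcal{A}(S,M)<\cdots<\text{Gal}_{\Sigma_i}\mathcal{A}(S,M)<H<\text{Gal}_{\Sigma_{i+1}}\mathcal{A}(S,M)<\cdots<\text{Gal}_{\Sigma_s}\mathcal{A}(S,M)
\]
of length $s+1$, which yields a strictly descending chain of Galois-like extension subalgebras of length $s+1$, thereby exhibiting a refinement of the algebra chain.

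The subtle point I expect to be the main obstacle is that Theorem \ref{c2} in the converse direction does not promise the refined chain of Galois-like extension subalgebras literally contains the original $\mathcal{A}(\Sigma_j)$ term-by-term; the maximal cluster subalgebras in $\mathcal{M}^{H_j}_{sub}$ need not be unique, as Example \ref{ex8gon} already shows. The resolution is that non-refinability is a statement about maximality of chain length: Proposition \ref{strictgsub} gives a length-preserving assignment from strict descending algebra chains to strict ascending subgroup chains, and Theorem \ref{c2} (under the orientation-preserved hypothesis, which is inherited by every subgroup of $\text{Gal}_{\Sigma_s}\mathcal{A}(S,M)$) gives a length-preserving assignment in the reverse direction. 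Hence the two chains share the same maximal length, so one chain admits a refinement if and only if the other does, which is exactly the claim of the corollary.
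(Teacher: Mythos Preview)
Your proposal is correct and follows exactly the same two-step structure as the paper's proof: one direction via Proposition~\ref{strictgsub}, the converse via the ``in particular'' part of Theorem~\ref{c2}. You in fact go further than the paper by explicitly flagging and addressing the non-uniqueness issue coming from Theorem~\ref{c2}; the paper's own proof simply cites the two results in one sentence each without this discussion.
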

\begin{proof}
	According to Proposition \ref{strictgsub}, if the sequence (\ref{subalgebras}) of cluster subalgebras can be refined, then the sequence (\ref{subgroups}) of subgroups can be refined.
	
	Conversely, according to the last part of Theorem \ref{c2}, if the sequence (\ref{subalgebras}) of subgroups can be refined, then the sequence (\ref{subalgebras}) of cluster subalgebras can be refined.
\end{proof}

{\bf Acknowledgements.}\;
{\em This project is supported by the National Natural Science Foundation of China (No.12071422, No.12131015).}
\vspace{10mm}


\end{document}